\numberwithin{equation}{section}
\newtheorem{theorem}{Theorem}[section] % Sets up theorem environment
\newtheorem{proposition}{Proposition}[section] % Sets up theorem environment
\newtheorem{lemma}{Lemma}[section] % Sets up lemma environment
\newtheorem{remark}{Remark}[section] % Sets up remark environment
\newcommand{\indicator}[1]{\ensuremath{1_{\{#1\}}}}
\newcommand{\R}{\mathbb R} %REALS
\newcommand{\C}{\mathbb C} %COMPLEX
\newcommand{\T}{\mathbb T} %TORUS
\renewcommand{\P}{\mathbb P}
\newcommand{\E}{\mathbb E}
\newcommand{\V}{\mathbb V}
\newcommand{\Cov}{\mathbb Cov}
\newcommand{\tr}{\text{tr}}
\newcommand{\Tr}{\text{Tr}}
\renewcommand\Re{\operatorname{\mathfrak{Re}}}
\renewcommand\Im{\operatorname{\mathfrak{Im}}}
\begin{document}
\title[On Fluctuations of Matrix Entries]{On Fluctuations of Matrix Entries of Regular Functions of Wigner Matrices with 
Non-Identically Distributed Entries}
\author[S. O'Rourke]{Sean O'Rourke}
\address{Department of Mathematics, University of California, Davis, One Shields Avenue, Davis, CA 95616-8633  }
\thanks{S.O'R. has been supported in part by the NSF grants  VIGRE DMS-0636297 and DMS-1007558}
\email{sdorourk@math.ucdavis.edu}

\author[D. Renfrew]{David Renfrew} \thanks{D.R. has been supported in part by the NSF grants VIGRE DMS-0636297, DMS-1007558, and DMS-0905988 }
\address{Department of Mathematics, University of California, Davis, One Shields Avenue, Davis, CA 95616-8633  }
\email{drenfrew@math.ucdavis.edu}

\author[A. Soshnikov]{Alexander Soshnikov}
\address{Department of Mathematics, University of California, Davis, One Shields Avenue, Davis, CA 95616-8633  }
\thanks{A.S. has been supported in part by the NSF grant DMS-1007558}
\email{soshniko@math.ucdavis.edu}

\begin{abstract}
In this note, we extend the results about the fluctuations of the matrix entries of regular functions of Wigner random matrices 
obtained in \cite{PRS} to Wigner matrices with non-i.i.d. entries provided certain Lindeberg type conditions for the fourth moments are satisfied.  
In addition, we relax our conditions on the test functions and require that for some $s>3$ \\
$ \int_{\R} (1+ 2|k|)^{2s}\*|\hat{f}(k)|^2 \* dk <\infty.$
\end{abstract}

\maketitle

\section{ \bf{Introduction and Formulation of Main Results}}
\label{sec:intro}

Let $X_N= \frac{1}{\sqrt{N}} W_N $ be a random Wigner real symmetric (Hermitian) matrix.
In the real symmetric case, we assume that the entries 
$$(W_N)_{jk},\ 1\leq j\leq k \leq N,$$ 
are independent random variables
such that the off-diagonal entries satisfy
\begin{equation}
\label{offdiagreal}
\E (W_N)_{jk}=0, \ \V(W_N)_{jk}=\sigma^2, \ 1\leq j<k \leq N, \ m_4:=\sup_{j\not=k,N} \E (W_N)_{jk}^4<\infty,
\end{equation}
and the Lindeberg type condition for the fourth moments takes place,
\begin{equation}
\label{lind1}
L_N(\epsilon) \to 0, \ \text{as} \ N\to \infty, \ \forall \epsilon>0, 
\end{equation}
where 
\begin{equation}
\label{lind2}
L_N(\epsilon)= \frac{1}{N^2}\* \sum_{1\leq i<j\leq N} \E\left( |(W_N)_{ij}|^4 \*\indicator{|(W_N)_{ij}|\geq \epsilon\*\sqrt{N}}\right ).
\end{equation}
Here and throughout the paper, $\E \xi $ denotes the mathematical expectation and $\V \xi $ the variance of a random variable $\xi.$

In addition, we assume that the diagonal entries  
satisfy
\begin{align}
\label{diagreal}
& \E (W_N)_{ii}=0, \ \ 1\leq i \leq N, \ \sigma^2_1:=\sup_{i,N} \E |(W_N)_{ii}|^2 < \infty,\\
\label{diagreal1}
& l_N(\epsilon)\to 0, \ \text{as} \ N\to \infty, \ \forall \epsilon>0, \ \text{where} \\
\label{diagreal2}
& l_N(\epsilon)= \frac{1}{N}\* \sum_{1\leq i \leq N} \E\left( |(W_N)_{ii}|^2 \*\indicator{|(W_N)_{ii}|\geq \epsilon\*\sqrt{N}}\right ).
\end{align}

We note that (\ref{lind1}) and (\ref{diagreal1})  are satisfied if
\begin{equation}
\label{foureps}
\sup_{i\not=j,N} \E |(W_N)_{ij}|^{4+\epsilon}<\infty, \ \sup_{i,N} \E |(W_N)_{ii}|^{2+\epsilon}<\infty.
\end{equation}

If $\{\frac{1}{\sqrt{2}}\*(W_N)_{ii}, 1\leq i \leq N, \ (W_N)_{jk}, \ 1\leq j < k \leq N, \}$ are i.i.d. $N(0, \sigma^2)$ random variables, $X_N$ 
belongs to the Gaussian Orthogonal Ensemble (GOE).

In the Hermitian case, we assume that the entries  
$$\Re (W_N)_{jk}, \ \Im (W_N)_{jk}, \ 1\leq j<k \leq N, \ (W_N)_{ii}, \ 1\leq i \leq N,$$
are independent random variables such that
the off-diagonal entries satisfy
\begin{align}
\label{offdiagherm1}
& \E \Re (W_N)_{jk}= \E \Im (W_N)_{jk} = 0, \ \ 1\leq j<k \leq N, \\
\label{offdiagherm2}
& \V  \Re (W_N)_{jk} = \V \Im (W_N)_{jk}= \frac{\sigma^2}{2}, \ 1\leq j<k \leq N, \ m_4:=\sup_{j\not=k, N} \E |(W_N)_{jk}|^4<\infty,
\end{align}
and the Lindeberg type condition (\ref{lind1}) for the fourth moments of the off-diagonal entries takes place.

In addition, we assume that the diagonal entries  satisfy
\begin{equation}
\label{diagherm}
\E (W_N)_{ii}=0, \ \ 1\leq i \leq N, \ \sigma^2_1:=\sup_{i,N} \E |(W_N)_{ii}|^2<\infty,
\end{equation}
and the Lindeberg type condition (\ref{diagreal1}) for the second moments of the diagonal entries takes place.

If $\{\frac{1}{\sqrt{2}}\*(W_N)_{ii}, 1\leq i \leq N, \ \Re (W_N)_{jk}, \ \Im (W_N)_{jk}, 
\ 1\leq j < k \leq N, \}$ are i.i.d. $N(0, \frac{\sigma^2}{2})$ random variables, $X_N$ 
belongs to the Gaussian Unitary Ensemble (GUE).

We define the empirical distribution of the eigenvalues of $X_N$ as
\begin{equation}
\label{edf}
\mu_{X_N} = \frac{1}{N} \sum_{i=1}^{N} \delta_{\lambda_{i}},
\end{equation}
where  $\lambda_1 \leq \ldots \leq \lambda_N$ are the (ordered) eigenvalues of $X_N.$

Wigner semicircle law (see e.g. \cite{wig}, \cite{BG}, \cite{AGZ}, \cite{B}) 
states that the random measure $\mu_{X_N}(dx,\omega)$ converges almost surely in distribution
to the (non-random) Wigner semicircle distribution  $\mu_{sc}.$  The limiting distribution is absolutely continuous with respect to the Lebesgue 
measure and its density is given by
\begin{equation}
\label{polukrug}
\frac{d \mu_{sc}}{dx}(x) = \frac{1}{2 \pi \sigma^2} \sqrt{ 4 \sigma^2 - x^2} \mathbf{1}_{[-2 \sigma , 2 \sigma]}(x).
\end{equation}
Its Stieltjes transform
\begin{equation}
\label{steltsem} 
g_\sigma(z) := \int \frac{d \mu_{sc}(x)}{z-x}= \frac{z-\sqrt{z^2-4\*\sigma^2}}{2\*\sigma^2}, \ z \in \C \backslash [-2\*\sigma, 2\*\sigma].
\end{equation}
is the solution to 
\begin{equation}
\label{semicircle}
\sigma^2 g_\sigma^2(z) - z g_\sigma(z) + 1 = 0
\end{equation}
that decays to 0 as $|z| \to \infty$.

This paper is devoted to the question of the fluctuations of  matrix entries of $f(X_N)$ for regular test functions $f.$
Lytova and Pastur (\cite{LP}) considered the GOE/GUE case and proved that
\begin{equation}
\label{roscha}
\sqrt{N} \* \left( f(X_N)_{ij} - \E (f(X_N)_{ij})\right) \to N(0, \frac{1+\delta_{ij}}{\beta}\* \omega^2(f)),
\end{equation}
with $\beta=1(2) $ in the GOE (GUE) case,
\begin{equation}
\label{omegasq}
\omega^2(f):= \V(f(\eta))= \frac{1}{2}\* \int_{-2\sigma}^{2\sigma} \int_{-2\sigma}^{2\sigma} (f(x)-f(y))^2 
\*  \frac{1}{4 \pi^2 \sigma^4} \sqrt{ 4 \sigma^2 - x^2} \* 
\sqrt{ 4 \sigma^2 - y^2} \* dx \* dy,
\end{equation}
where $\eta$ is distributed according to the Wigner semicircle law (\ref{polukrug}).

In \cite{PRS}, Pizzo, Renfrew, and Soshnikov considered the non-Gaussian case and proved the following theorems.

\begin{theorem}[Theorem 1.3 in \cite{PRS}]
\label{thm:real}
Let $X_N=\frac{1}{\sqrt{N}} W_N$ be a random real symmetric Wigner matrix
(\ref{offdiagreal}), (\ref{diagreal}) 
such that the off-diagonal entries $(W_N)_{jk}, 1\leq j<k\leq N, $ are i.i.d. random variables with probability distribution $\mu$ and
the diagonal entries $(W_N)_{ii}, \ 1\leq i \leq N, $ are i.i.d. random variables with probability distribution $\mu_1.$ 

Let $f:\R \to  \R $ be four times continuously differentiable on $[-2\*\sigma -\delta, 2\*\sigma +\delta] $ for some $\delta>0\ $  and
$h(x)$ be a $C^{\infty}(\R) $ function with compact support such that
\begin{equation}
\label{amerika}
h(x)\equiv 1 \ \text{for} \  x \in [-2\*\sigma -\delta, 2\*\sigma +\delta], \ \delta>0.
\end{equation}
Then the following holds.

(i) For $ i =j, $
\begin{equation}
\label{trudno1}
\sqrt{N} \*\left( f(X_N)_{ii} - \E\left( (fh)(X_N)_{ii}\right)   \right) - \frac{\alpha(f)}{\sigma}\* (W_N)_{ii} \to N(0, v_1^2(f)),
\end{equation}
in distribution as $N \to \infty.$
where 
\begin{align}
\label{vsqf}
& v_1^2(f):= 2 \* \left(\omega^2(f) -\alpha^2(f) +\frac{\kappa_4(\mu)}{2\*\sigma^4} \* \beta^2(f)\right), \\
\label{alphaf}
& \alpha(f):=\E\left(f(\eta)\* \frac{\eta}{\sigma}\right)= 
\frac{1}{\sigma}\* \int_{-2\sigma}^{2\sigma} x\*f(x)\* \frac{1}{2 \pi \sigma^2} \sqrt{ 4 \sigma^2 - x^2}  \* dx, \\
\label{betaf}
& \beta(f):= \E\left(f(\eta)\*\frac{\eta^2-\sigma^2}{\sigma^2}\right)= \frac{1}{\sigma^2}\int_{-2\sigma}^{2\sigma} f(x)\*(x^2-\sigma^2)\* 
\frac{1}{2 \pi \sigma^2} \sqrt{ 4 \sigma^2 - x^2},
\end{align}
$\omega^2(f)$ defined in  (\ref{omegasq}), and 
$\kappa_4(\mu)$ is the fourth cumulant of $\mu,$
\begin{equation*}
\kappa_4(\mu)= \int x^4 \*\mu(dx) - 3\* (\int x^2 \* \mu(dx))^2=\E |(W_N)_{12}|^4-3\sigma^4.
\end{equation*}

If $f$ is seven times continuously differentiable on $[-2\sigma-\delta, 2\sigma+\delta], $ then one can replace
$\E\left( (fh)(X_N)_{ii}\right)$ in (\ref{trudno1}) by
\begin{equation}
\label{c1f}
\int_{-2\sigma}^{2\sigma} f(x) \* \frac{1}{2 \pi \sigma^2} \sqrt{ 4 \sigma^2 - x^2}  \* dx.
\end{equation}

(ii) For $ i \not=j, $
\begin{equation}
\label{trudno2}
\sqrt{N} \* \left( f(X_N)_{ij} - \E\left( (fh)(X_N)_{ij}\right)\right)- \frac{\alpha(f)}{\sigma} \*(W_N)_{ij} \to N(0, d^2(f))
\end{equation}
in distribution as $N \to \infty, $
where
\begin{equation}
\label{dsqf}
d^2(f):= \omega^2(f) -\alpha^2(f).
\end{equation}
If $f$ is six times continuously differentiable on $[-2\sigma-\delta, 2\sigma+\delta], $ then one can replace
$\E\left( (fh)(X_N)_{ij}\right)$ in (\ref{trudno2}) by $0.$

(iii) For any finite $m, $ the normalized matrix entries 
\begin{equation}
\label{mnogo}
\sqrt{N} \*\left( f(X_N)_{ij} - \E ((fh)(X_N)_{ij})\right), \ 1\leq i \leq j \leq m, 
\end{equation}
are independent in the limit $N \to \infty. $ 
\end{theorem}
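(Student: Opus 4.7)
The plan is to adapt the Lytova-Pastur strategy for the Gaussian case to non-Gaussian entries, combining a martingale CLT with a perturbative extraction of the linear contribution of $(W_N)_{ij}$ itself. Three ingredients are needed: a reduction from $f(X_N)_{ij}$ to the analysis of the matrix semigroup $e^{itX_N}$; a Duhamel-type expansion that pulls out the single entry $(W_N)_{ij}$; and a martingale CLT for the remaining centered fluctuation.

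First, the cutoff $h$ together with the spectral localization $\|X_N\|\to 2\sigma$ in probability (a consequence of Bai-Yin-type arguments under the fourth-moment assumption on the off-diagonal entries) allows us to replace $f$ by $fh$ without affecting the matrix entry on an event of probability tending to $1$. Via the Fourier representation
$$f(X_N)_{ij} = \int \hat f(t)\,(e^{itX_N})_{ij}\,dt,$$
the problem reduces to controlling the characteristic-function-type quantity $(e^{itX_N})_{ij}$, with the decay of $|\hat f(t)|$ compensating the polynomial growth in $t$ of the estimates; this is precisely what dictates the Sobolev regularity of $f$ assumed in the theorem.

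Second, to isolate the contribution of the $(i,j)$-entry itself, introduce the minor $X_N^{(ij)}$ equal to $X_N$ with its $(i,j)$ and $(j,i)$ entries zeroed out, and Duhamel-expand $e^{itX_N}$ around $X_N^{(ij)}$. The term linear in $(W_N)_{ij}$, after integration against $\hat f(t)$, yields the correction $\alpha(f)(W_N)_{ij}/\sigma$ via the semicircle limit for $X_N^{(ij)}$. The residual centered fluctuation is then analyzed by a martingale decomposition along the filtration $\mF_k$ generated by the first $k$ rows of $W_N$. Schur complement and resolvent identities express each martingale difference in terms of the $k$-th row of $W_N$ and the resolvent of the corresponding principal minor; a finite cumulant expansion (which replaces the Gaussian integration by parts of \cite{LP}) computes the conditional second moments, whose sum converges to $v_1^2(f)$ or $d^2(f)$. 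In particular, the term $\kappa_4(\mu)\beta^2(f)/(2\sigma^4)$ emerges from the fourth cumulant of a single diagonal increment in the Schur expansion, and the conditional Lindeberg condition required for the McLeish martingale CLT follows from $m_4<\infty$ and (\ref{lind1}).

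For part (iii), the Cram\'er-Wold device reduces joint convergence to the computation of cross conditional covariances $\E[D_k^{(ij)} D_k^{(i'j')} \mid \mF_{k-1}]$ for the same martingale decomposition; these vanish in the limit for distinct index pairs because the relevant row sums involve products of entries from disjoint rows or columns after conditioning. The main obstacle throughout will be the conditional variance computation in the non-Gaussian regime: one must carry out a cumulant expansion up to order four, carefully track how the resolvent of the $k$-th minor depends on the row being integrated out, and verify that the $\kappa_4(\mu)$ term enters only through the diagonal structure with the precise coefficient $\beta^2(f)/(2\sigma^4)$ predicted by (\ref{vsqf}). The remaining work is routine but delicate bookkeeping, with the spectral-norm bound on $X_N$ used to control the resolvent at complex arguments appearing in the Fourier or Helffer--Sj\"ostrand representation of $f$.
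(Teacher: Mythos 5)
Your overall architecture --- extract the linear $(W_N)_{ij}$ contribution, then prove a CLT for the centered remainder --- parallels the paper's, but the limit mechanism you invoke is the wrong one, and this is a genuine gap rather than bookkeeping. You propose a McLeish martingale CLT along the filtration $\mF_k$ generated by the first $k$ rows, with the limiting variance arising as a sum of many conditional second moments of the differences $D_k$. That structure is correct for linear eigenvalue statistics $\Tr f(X_N)$, where every row contributes comparably, but it fails for a single matrix entry with $i,j$ fixed: the Schur-complement representation $R_{ii}(z)=\bigl(z-X_{ii}-\langle x^{(i)},\tilde R(z)x^{(i)}\rangle\bigr)^{-1}$ shows that, conditionally on the complementary $(N-1)\times(N-1)$ minor, essentially the entire limiting variance $v_1^2(f)$ is produced by the fluctuation of the quadratic form $\langle x^{(i)},\tilde R(z)x^{(i)}\rangle$ over the entries of row $i$ alone; the contribution of all other rows to $\sqrt N\,(f(X_N)_{ii}-\E f(X_N)_{ii})$ is $o(1)$ in $L^2$ (the fluctuation of $\tr_N\tilde R$ is $O(1/N)$, not $O(1/\sqrt N)$). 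Hence the single difference $D_i$ is not asymptotically negligible --- it carries all of the variance --- and the conditional Lindeberg condition of the martingale CLT cannot hold. What is actually needed is a CLT for the bilinear form in the entries of rows $i$ and $j$ conditionally on the minor; this is precisely Theorem \ref{thm:real-qf}/\ref{thm:mult-real-qf} of the Appendix, applied through the decomposition (\ref{ugol})--(\ref{qatar}). Relatedly, the $\kappa_4(\mu)\beta^2(f)/(2\sigma^4)$ term does not come from ``a single diagonal increment'': it is the aggregate $\frac1N\sum_k \kappa_4\,(\tilde R(z))_{kk}(\tilde R(w))_{kk}\to\kappa_4\, g_\sigma(z)g_\sigma(w)$ over the whole row (condition (iv) of the quadratic-form CLT), which yields $\beta(f)^2$ only after the functional-calculus step.

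A secondary problem is regularity. The representation $f(X_N)_{ij}=\int\hat f(t)(e^{itX_N})_{ij}\,dt$ with the usual Duhamel error estimates requires roughly $\int(1+|t|)^3|\hat f(t)|\,dt<\infty$ (the Lytova--Pastur condition), while $fh\in C^4_c$ only gives $|\widehat{fh}(t)|=O((1+|t|)^{-4})$, which is insufficient for that $L^1$ bound. The paper avoids this by working with resolvents via the Helffer--Sj\"ostrand formula (\ref{formula-H/S}), where only $\|f\|_{C^4}$-type norms and powers of $|\Im z|^{-1}$ enter, supplemented by the $L^2$-based variance bound of Proposition \ref{prop:variance} for the extension to $\mathcal{H}_s$, $s>3$. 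If you wish to keep the $e^{itX_N}$ route, you must either strengthen the assumed regularity of $f$ or replace the $L^1$ Fourier bound by a Cauchy--Schwarz argument of the type used in Proposition \ref{prop:variance}.
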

\begin{remark}
If $f \in C^4(\R)$ and $\|f\|_{4,1} <\infty, $ where 
\begin{equation}
\label{normsob}
\|f\|_{n,1}:= \max_{0\leq k \leq n} \left( \int_{-\infty}^{\infty} |d^kf/dx^k(x)| \* dx \right)<\infty, 
\end{equation}
then one can replace $\E\left( (fh)(X_N)_{ij}\right)$ in (\ref{trudno1}-\ref{trudno2}) by
$\E (f(X_N))_{ij}. $
\end{remark}
In the Hermitian case, the analogue of Theorem \ref{thm:real} was proved in Theorem 1.7 of \cite{PRS}.

\begin{theorem}
\label{thm:herm}[Theorem 1.7 in \cite{PRS}]
Let $X_N=\frac{1}{\sqrt{N}} W_N$ be a random Hermitian Wigner matrix (\ref{offdiagherm1}-\ref{diagherm}),
such that the off-diagonal entries $(W_N)_{jk}, 1\leq j<k\leq N, $ are i.i.d. complex random variables with probability distribution $\mu$
and the diagonal entries $(W_N)_{ii}, \ 1\leq i \leq N, $ are i.i.d. random variables with probability distribution $\mu_1.$ 

Let $f:\R \to  \R $ be four times continuously differentiable on $[-2\*\sigma -\delta, 2\*\sigma +\delta] $ for some $\delta>0,\ $ 
and $h(x)$ be a $C^{\infty}(\R) $ function with compact support satisfying (\ref{amerika}).
Then the following holds.

(i) For $ i =j, $
\begin{equation}
\label{trudno11}
\sqrt{N} \*\left( f(X_N)_{ii} - \E\left( (fh)(X_N)_{ii}\right)\right) - \frac{\alpha(f)}{\sigma}\* (W_N)_{ii} \to N(0, v_2^2(f))
\end{equation}
in distribution as $N \to \infty,$
where

\begin{equation}
\label{vsqfherm}
v_2^2(f):= \omega^2(f) -\alpha^2(f) +\frac{\kappa_4(\mu)}{\sigma^4} \* \beta^2(f), 
\end{equation}
$ \omega^2(f), \alpha(f),$ and $ \beta(f)$ are defined in (\ref{omegasq}), (\ref{alphaf}), and (\ref{betaf}),
and $\kappa_4(\mu)$ is given by
\begin{equation*}
\kappa_4(\mu):= \E |(W_N)_{12}|^4-2\sigma^4.
\end{equation*}

If $f$ is seven times continuously differentiable on $[-2\sigma-\delta, 2\sigma+\delta], $ then one can replace $\E\left( (fh)(X_N)_{ii}\right) $
in (\ref{trudno11}) by (\ref{c1f}).

(ii) For $ i \not=j, $
\begin{equation}
\label{trudno12}
\sqrt{N} \* \left( f(X_N)_{ij} - \E\left( (fh)(X_N)_{ij}\right)\right) - \frac{\alpha(f)}{\sigma} \*(W_N)_{ij}\to N(0, d^2(f)),
\end{equation}
in distribution as $N \to \infty, $ where $N(0, d^2(f))$ stands for the complex Gaussian random variable with 
with i.i.d real and imaginary parts $N(0, \frac{1}{2}\*d^2(f)),$ and 
$d^2(f)$ defined in (\ref{dsqf}).

If $f$ is six times continuously differentiable on $[-2\sigma-\delta, 2\sigma+\delta], $ then one can replace $\E\left( (fh)(X_N)_{ij}\right)$
in (\ref{trudno12}) by $0.$

(iii) For any finite $m, $ the normalized matrix entries 
\begin{equation}
\label{mnogoh}
\sqrt{N} \*\left( f(X_N)_{ij} - \E ((fh)(X_N)_{ij})\right), \ 1\leq i \leq j \leq m, 
\end{equation}
are independent in the limit $N \to \infty. $ 
\end{theorem}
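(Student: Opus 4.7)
The plan is to mirror the strategy of the real symmetric analogue (Theorem \ref{thm:real}), organized in three phases: polynomial approximation, combinatorial walk expansion, and a martingale CLT for the remainder.

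\emph{Reduction to polynomials.} Norm concentration for Hermitian Wigner matrices places the spectrum of $X_N$ in $[-2\sigma - \delta/2, 2\sigma + \delta/2]$ with probability $1 - o(1)$, and on this event $f(X_N) = (fh)(X_N)$. Approximate $fh$ by polynomials $p_n$ of growing degree in a norm strong enough to control the sup-norm on the spectral neighborhood together with the functionals $\alpha(\cdot)$, $\beta(\cdot)$, and $\omega^2(\cdot)$. The $C^4$ hypothesis ensures that $\sqrt{N}\,|(fh - p_n)(X_N)_{ij}| \to 0$ in probability, uniformly in $N$, as $n \to \infty$, and that $\alpha(p_n) \to \alpha(f)$, $\beta(p_n) \to \beta(f)$, and the variance functionals converge. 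This reduces the problem to a fixed polynomial test function.

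\emph{Combinatorial expansion.} For $p(x) = \sum_k c_k x^k$, expand
\begin{equation*}
(X_N^k)_{ij} = N^{-k/2}\sum_{i_1,\dots,i_{k-1}}(W_N)_{i,i_1}(W_N)_{i_1,i_2}\cdots(W_N)_{i_{k-1},j}
\end{equation*}
as a sum over walks of length $k$ from $i$ to $j$, classified by their multigraph skeleton. Closed/tree-type walks (of Dyck type) reproduce the deterministic part $\E (p(X_N))_{ij}$ at leading order. Walks in which the direct edge $\{i,j\}$ is traversed exactly once, with the remaining edges forming a tree-like excursion of the right shape, contribute the linear term $\alpha(p)(W_N)_{ij}/\sigma$. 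Every other centered contribution is incorporated into a martingale with respect to the row-filtration of $W_N$, identifying the three summands in (\ref{trudno11})--(\ref{trudno12}).

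\emph{Martingale CLT.} Apply the martingale CLT (or the method of moments) to the centered remainder. The Wigner-type pair matchings of walk edges yield the $\omega^2(p) - \alpha^2(p)$ variance contribution. For $i = j$ in the Hermitian case, diagrams in which a single edge is traversed exactly four times contribute the non-Gaussian correction $(\kappa_4(\mu)/\sigma^4)\,\beta^2(p)$; the shift from $-3\sigma^4$ in the real case to $-2\sigma^4$ here reflects $\E W_{12}^2 = 0$ for complex $W_{12}$. For $i \neq j$ the single-edge-four-times diagrams are suppressed by a factor of $1/N$, so no fourth-cumulant term appears. Part (iii) follows by computing cross-covariances of the martingales indexed by distinct pairs and verifying that they vanish at order $1/N$. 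The main obstacle is the combinatorial accounting in the second phase: isolating with sharp precision which walk topologies contribute at leading order versus lower order, and verifying that the only surviving non-Gaussian correction is the fourth-cumulant term. A secondary technical point is arranging the polynomial approximation in phase one so that the $O(1)$ subtraction of $\alpha(f)(W_N)_{ii}/\sigma$ is well-defined in the limit, which is what forces convergence of $\alpha(p_n) \to \alpha(f)$ to hold in a sufficiently strong mode.
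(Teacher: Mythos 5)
You should first note that the paper does not prove Theorem \ref{thm:herm} here at all: it is quoted verbatim from \cite{PRS}, and the route there (and in the present paper's extension, Theorem \ref{thm:main}) is resolvent-based, not combinatorial. One writes $R^{(m)}_N(z)$ via the Schur complement as in (\ref{ugol}), identifies the fluctuation of the quadratic forms $\langle x^{(i)},\tilde R_N(z)x^{(j)}\rangle$ through a CLT for quadratic forms (Theorems \ref{thm:qf}--\ref{thm:mult-real-qf}), and then passes from resolvents to general test functions by Helffer--Sj\"ostrand calculus together with the variance bound $\V\big(g(X_N)_{ij}\big)\le C\,\|g\|_{4,1}^2/N$ of Proposition \ref{proposition:prop3}(iv), approximating $fh$ by linear combinations $\sum_l a_l h_l(x)(z_l-x)^{-1}$ via Stone--Weierstrass. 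Your moment-method plan is therefore a genuinely different strategy, and your heuristics for the individual terms (the $\alpha(f)(W_N)_{ij}/\sigma$ term from walks using the edge $\{i,j\}$ once; $\kappa_4(\mu)=\E|W_{12}|^4-2\sigma^4$ because $\E W_{12}^2=0$; no fourth-cumulant term off the diagonal) are consistent with the statement. But as written the proposal has two genuine gaps.

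First, the reduction to polynomials is circular where it matters. Sup-norm approximation of $fh$ on the spectral interval gives only $|(fh-p_n)(X_N)_{ij}|\le\|fh-p_n\|_\infty$, which cannot beat the factor $\sqrt N$; to conclude $\sqrt N\,|(fh-p_n)(X_N)_{ij}|\to0$ uniformly in $N$ you need precisely a bound of the form $\V\big(g(X_N)_{ij}\big)=O(\|g\|^2/N)$ for a norm in which your approximants converge. That bound is the hard analytic input of the whole argument (it is what Propositions \ref{proposition:prop2}, \ref{proposition:prop4} and \ref{prop:variance} are for), and your phrase ``in a norm strong enough to control'' assumes it rather than proves it; the $C^4$ hypothesis alone does not ``ensure'' it. Second, the combinatorial/martingale phase --- which you yourself flag as ``the main obstacle'' --- is the entire content of the CLT and is not carried out. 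In particular, for a \emph{single matrix entry} (as opposed to a trace) the centered remainder is not obviously a martingale with respect to the row filtration: removing or resampling a row changes the entry $(i,j)$ itself, not just a summand, so the classical resolvent martingale decomposition used for linear eigenvalue statistics does not transfer without a new argument. Until the variance bound and the walk classification (which topologies survive at order $N^{-1/2}$, why the only non-Gaussian survivor is the single-edge-four-times diagram, and why cross-covariances between distinct index pairs vanish) are actually established, the proposal is a plausible program rather than a proof.
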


Almost simultaneously with \cite{PRS}, Pastur and Lytova (see Theorem 3.4 in \cite{LytP}) 
extended the technique of \cite{LP} and proved the convergence in distribution
for the normalized diagonal entries $\sqrt{N} \* \left( f(X_N)_{ii} - \E\left( f(X_N)_{ii}\right)\right), \ 1\leq i \leq N, $ when
the real symmetric Wigner matrix $X_N$ has i.i.d. entries up from the diagonal and, in addition to the requirements of Theorem \ref{thm:real}, 
the cumulant generating functions $\log\left(\E e^{zW_{12}}\right)$ is entire.  The results of \cite{LytP} hold provided the test function satisfies
\begin{equation*}
\int_{\R} (1+ 2\*|k|)^3\*|\hat{f}(k)| \* dk <\infty, 
\end{equation*}
where $\hat{f}(k)$ is the Fourier transform
\begin{equation}
\label{fourier}
\hat{f}(k)=\frac{1}{\sqrt{2\pi}}\*\int_{\R} e^{-i\*k\*x}\* f(x)\*dx.
\end{equation}
The approaches of \cite{PRS} and \cite{LytP} are independent from each other.  In particular, Pastur and Lytova prove the convergence of the 
characteristic function of $$\sqrt{N} \* \left( f(X_N)_{ii} - \E\left( f(X_N)_{ii}\right)\right).$$  

In addition, in the non-i.i.d. case, Theorem 3.2 of \cite{LytP} proves that
\begin{equation*}
\V [\sqrt{N} \*\left( f(X_N)_{ii} - \E ( f(X_N)_{ii}) \right)] \to 2\*v^2(f)
\end{equation*}
provided the matrix entries $(W_N)_{ij}$ are independent up from the diagonal
and satisfy
\begin{align}
\label{kharkiv}
& \E (W_N)_{jk}=0, \ \V(W_N)_{jk}=\sigma^2, \ \E (W_N)_{jk}^3=m_3, \E (W_N)_{jk}^4=m_4<\infty, \\
\label{kharkiv1}
& \sup_{j,k,N} \E |(W_N)_{jk}|^6 <\infty.
\end{align}

In this paper, we extend Theorems \ref{thm:real}  and \ref{thm:herm} to the non-i.i.d. setting provided the matrix entries
satisfy the fourth moment Lindeberg type conditions (\ref{lind1}) and (\ref{LIND1}) for the off-diagonal entries and the second moment 
Lindeberg type condition (\ref{diagreal1}) for the diagonal entries.  
Moreover, we relax the smoothness condition imposed in \cite{PRS} on the test function.

Consider the space $\mathcal{H}_s$
consisting of the functions $\phi:\R \to \R$  that satisfy
\begin{equation}
\label{sobolev}
\|\phi\|^2_s:= \int_{\R} (1+ 2|k|)^{2s}\*|\hat{\phi}(k)|^2 \* dk <\infty.
\end{equation}

The result below is valid (see Remark \ref{zamechanie}) provided a test function $f$ coincides on the interval 
$[-2\sigma-\delta, 2\sigma+\delta]$ with some function from 
 $\mathcal{H}_s$ for some $s>3, \ \delta>0.$  Thus, roughly speaking, we require that $f$ has
$3+\epsilon$ derivatives  on $[-2\sigma-\delta, 2\sigma+\delta].$

We recall that $C^n(\R)$ and $C^n([-L, L]) $ denote the spaces of $n$ times continuously differentiable functions on $\R$ and $[-L, L],$ respectively.
We define the norm on $C^n([-L, L]) $ as
\begin{equation}
\label{cnnorm}
\|f\|_{C^n([-L,L])}:= \max\left(|\frac{d^lf}{dx^l}(x)|, \ x \in [-L, L], \  0\leq l \leq n \right).
\end{equation}

\begin{theorem}
\label{thm:main}
Let $X_N=\frac{1}{\sqrt{N}} W_N$ be a random real symmetric (Hermitian) Wigner matrix
(\ref{offdiagreal}), (\ref{diagreal}) (respectively (\ref{offdiagherm1}-\ref{diagherm})  such that
the Lindeberg type condition (\ref{lind1}) for the fourth moments of the off-diagonal entries and 
the Lindeberg type condition (\ref{diagreal1}) for the second moments of the diagonal entries
are satisfied.
Let $f\in \mathcal{H}_s,$  for some $s>3.$
Let $m$ be a fixed positive integer, and for $1\leq i\leq m,$ assume that the following two conditions hold:

$(A_1)$
\begin{equation}
\label{LIND1}
\mathcal{L}_{i,N}(\epsilon) \to 0, \ \text{as} \ N\to \infty, \ \forall \epsilon>0, 
\end{equation}
where
\begin{equation}
\label{LIND2}
\mathcal{L}_{i,N}(\epsilon)= \frac{1}{N}\* \sum_{j: j\not=i} 
\E\left( |(W_N)_{ij}|^4 \*\indicator{|(W_N)_{ij}|\geq \epsilon\*N^{1/4}}\right );
\end{equation}
$(A_2)$
\begin{equation}
\label{AI}
m_4(i):=\lim_{N\to \infty} \frac{1}{N} \*\sum_{j:j\not=i} \E |(W_N)_{ij}|^4
\end{equation}
exists.

Then the results (i)-(iii) of Theorem \ref{thm:real} (respectively Theorem \ref{thm:herm}) hold for the joint distribution
of the matrix entries  $ \{\sqrt{N} \* \left( f(X_N)_{ij} - \E\left( f(X_N)_{ij}\right)\right), \ 1\leq i\leq j\leq m \},$
where $\kappa_4(\mu)$ must be replaced in (\ref{vsqf}) by 
\begin{equation}
\label{KIreal}
\kappa_4(i):=m_4(i)-3\*\sigma^4, \ 1\leq i\leq m,
\end{equation}
in the real symmetric case and by
\begin{equation}
\label{KIherm}
\kappa_4(i):=m_4(i)-2\*\sigma^4, \ 1\leq i\leq m,
\end{equation}
in the Hermitian case.

In addition, the following estimates for $\E (f(X_N)_{ij})$ take place.

(iv) Let $f:\R \to  \R $ belong to $C_c^7(\R),$ the space of  seven times continuously differentiable functions with compact support,
and $supp(f)\in [-L, L]$ for some $L>0.$ Then there exists a constant $Const_1(L, \sigma, \sigma_1, m_4)$ depending on $L, \ \sigma, \ \sigma_1, m_4,$ 
such that for $ \ 1\leq i \leq N,$
\begin{equation}
\label{chto1}
\big|\E (f(X_N)_{ii})-\int_{-2\sigma}^{2\sigma} f(x) \* \frac{1}{2 \pi \sigma^2} \sqrt{ 4 \sigma^2 - x^2}  \* dx\big|  
\leq \frac{Const_1(L, \sigma, \sigma_1, m_4)}{N} \* \|f\|_{C^7([-L,L])} 
\end{equation}

(v) Let $f \in C^8(\R), $
then there exists a constant $Const_2(\sigma, \sigma_1, m_4)$ such that
\begin{align}
\label{chto11}
& \big|\E (f(X_N)_{ii})-\int_{-2\sigma}^{2\sigma} f(x) \* \frac{1}{2 \pi \sigma^2} \sqrt{ 4 \sigma^2 - x^2}  \* dx\big|  \\
& \leq \frac{Const_2(\sigma, \sigma_1, m_4)}{N} \* \|f\|_{8,1,+}, \ \ 1\leq i \leq N, \nonumber
\end{align}
where
\begin{equation}
\label{normasobolev}
\|f\|_{n,1,+}:=\max \left( \int_{\R} (|x|+1) \*|\frac{d^lf}{dx^l}(x)| \* dx, \ 0\leq l \leq n \right).
\end{equation}

(vi) Let $f\in C^6(\R), $
then there exists a constant $Const_3(\sigma, \sigma_1, m_4)$ such that
\begin{equation}
\label{chto12}
\big|\E (f(X_N)_{jk})\big|\leq \frac{Const_3(\sigma, \sigma_1, m_4)}{N} \* \|f\|_{6,1}, \ \ 1\leq j <k\leq N,
\end{equation}
where $\|f\|_{6,1}$ is defined in (\ref{normsob}).
\end{theorem}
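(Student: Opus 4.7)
The plan is to build on the Fourier/resolvent methodology of \cite{PRS}, based on the representation
$$f(X_N)_{ij} = \frac{1}{\sqrt{2\pi}}\int_\R \hat f(k)\,(e^{ikX_N})_{ij}\,dk,$$
with two independent refinements: a Lindeberg-type truncation to handle non-identically distributed entries, and sharper Cauchy--Schwarz estimates in the Fourier variable to exploit the weaker Sobolev assumption $f\in\mathcal H_s$, $s>3$. First I would truncate: replace each $(W_N)_{ij}$ ($i\ne j$) by $\tilde W_{ij}=(W_N)_{ij}\indicator{|(W_N)_{ij}|<\epsilon N^{1/4}}$ (re-centered), and each diagonal entry by $(W_N)_{ii}\indicator{|(W_N)_{ii}|<\epsilon\sqrt{N}}$ (re-centered). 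Condition $(A_1)$ and (\ref{diagreal1}) imply $\|X_N-\tilde X_N\|_{\rm HS}\to 0$ in $L^1$, so by a standard Lipschitz estimate for $f\in\mathcal H_s$ the normalized differences $\sqrt N((f(X_N))_{ij}-(f(\tilde X_N))_{ij})$ vanish in probability. It therefore suffices to prove parts (i)--(iii) for the truncated matrix, whose entries are bounded by $\epsilon N^{1/4}$ but whose fourth-moment row-averages still converge to the same $m_4(i)$ by $(A_2)$.

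For (i)--(iii), I would then replay the martingale/Duhamel expansion of \cite{PRS} applied to $\sqrt N((e^{ikX_N})_{ij}-\E(e^{ikX_N})_{ij})$, decomposed into martingale differences with respect to the filtration generated by successive rows of $\tilde W_N$. Each increment is expanded to second order via the resolvent identity; in the i.i.d.\ setting the leading contribution depended on $\sigma^2$ and $\kappa_4(\mu)$, while here it depends on row-averages $N^{-1}\sum_{j\ne i}\E|(W_N)_{ij}|^4$, whose limits are exactly $m_4(i)$, producing the modified coefficients $\kappa_4(i)$ in (\ref{KIreal})--(\ref{KIherm}). The fluctuations across rows are controlled by the martingale CLT (Brown's theorem), whose Lindeberg condition is a direct consequence of $(A_1)$. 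Integrating the resulting pointwise-in-$k$ CLT against $\hat f(k)$ and applying Cauchy--Schwarz with weight $(1+2|k|)^{2s}$ converts convergence of characteristic functions into the claimed joint Gaussian limit under $\|f\|_s<\infty$ with $s>3$, which is precisely what is needed to absorb the polynomial factors of $k$ arising in the variance and covariance integrals. Independence across distinct $(i,j)$ pairs in the limit follows, as in \cite{PRS}, from the vanishing of the off-diagonal covariance terms in this expansion.

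For the deterministic bounds (iv)--(vi), I would iterate the cumulant expansion for resolvent entries (following the Khorunzhy--Khorunzhenko--Pastur recipe) to first sub-leading order, reading off the $1/N$ correction to $\E G_{ij}(z)$ in terms of the semicircle Stieltjes transform $g_\sigma(z)$, and then convert the estimate on resolvents to one on $\E f(X_N)_{ij}$ via the Helffer--Sj\"ostrand formula in (v)--(vi) and via Fourier inversion with a smooth compactly supported cutoff in (iv). The specific smoothness thresholds $C^7,C^8,C^6$ arise from the minimal number of derivatives needed to bound the remainder in the corresponding representation: (iv) benefits from compact support, (v) pays an extra derivative for weighted $L^1$ control of $\hat f$ on the real line, and (vi) exploits the extra $1/\sqrt N$ decay of off-diagonal resolvent entries to reduce the derivative count by one.

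The principal obstacle is calibrating the truncation scale $N^{1/4}$ in $(A_1)$ against the expansions used in both the martingale CLT and the fourth-cumulant identification. Truncating at $\sqrt{N}$ as in (\ref{lind2}) suffices for spectral convergence but is too weak to preserve the fourth-moment row-averages that determine $v_1^2(f)$ and $v_2^2(f)$; conversely, truncating too aggressively would disturb these very averages. The scale $N^{1/4}$ is the one at which (a) truncated entries remain small enough that the resolvent expansions are uniformly controlled and (b) $N^{-1}\sum_{j\ne i}\E|\tilde W_{ij}|^4\to m_4(i)$ by $(A_1)$. Matching these two requirements while tracking all error terms uniformly in $k$ over the range dictated by $s>3$ is the delicate heart of the argument.
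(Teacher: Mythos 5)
Your overall architecture (truncation, a CLT for the fluctuating part, resolvent cumulant expansions plus Helffer--Sj\"ostrand for (iv)--(vi)) is reasonable, and your treatment of (iv)--(vi) matches the paper's (Propositions \ref{proposition:prop2} and \ref{proposition:prop3}). But the core CLT is attempted by a genuinely different route: you propose a martingale/Duhamel expansion of $(e^{ikX_N})_{ij}$ in the spirit of Lytova--Pastur, whereas the paper writes $R^{(m)}_N(z)=\bigl(zI_m-\tfrac{1}{\sqrt N}W_N^{(m)}-M^*\tilde R_N(z)M\bigr)^{-1}$ and reduces everything to a CLT for the quadratic forms $\langle x^{(i)},\tilde R_N(z)x^{(j)}\rangle$ (Theorems \ref{thm:resreal}--\ref{thm:resherm} and the Appendix). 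The Schur-complement route makes the extra summand $\frac{\alpha(f)}{\sigma}(W_N)_{ij}$ appear explicitly as the $W_N^{(m)}$ part of $\Gamma_N(z)$, and the row-averaged fourth moments $m_4(i)$ enter cleanly through condition (iv) of the quadratic-form CLT; your martingale route would have to extract both from the first increments and the quadratic variation, which is feasible in principle but not spelled out.

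Two steps in your proposal are genuinely problematic. First, the truncation: condition $(A_1)$ controls fourth moments at scale $N^{1/4}$ only for rows $1\leq i\leq m$; for the remaining rows you only have the global condition (\ref{lind1}) at scale $\sqrt N$, so you cannot truncate \emph{all} off-diagonal entries at $\epsilon N^{1/4}$ (the paper truncates the bulk at $\epsilon_N\sqrt N$ in Lemma \ref{lemma:1} and only the first $m$ rows at $\epsilon_N N^{1/4}$ in Lemma \ref{lemma:2}). Moreover, your mechanism for removing the truncation fails: a Lipschitz bound $\|f(X_N)-f(\tilde X_N)\|_{\mathrm{HS}}\lesssim\|X_N-\tilde X_N\|_{\mathrm{HS}}$ is useless after multiplication by $\sqrt N$ unless $\|X_N-\tilde X_N\|_{\mathrm{HS}}=o(N^{-1/2})$ in probability, which the Lindeberg conditions do not give. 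The correct device is the one in Lemmas \ref{lemma:1}--\ref{lemma:2}: modify the entries so that $\P(W_N\neq\tilde W_N)\to 0$, whence the two sequences share all limits in distribution. Second, the passage from a pointwise (in $k$, or in $z$) CLT to test functions $f\in\mathcal H_s$ with $s>3$ is the technical heart of the paper and is only asserted in your sketch. One needs a quantitative variance bound: the paper proves $\V[\E(f(X)|\mathcal F)]\leq C_s\|f\|_s^2\int_0^\infty e^{-y}y^{2s-1}\int_\R \V[\Im R_{ij}(x+iy)]\,dx\,dy$ (Proposition \ref{prop:variance}, via positive-definiteness of the covariance kernel $C(k_1,k_2)$ and a trace bound) and, crucially, a bound on $\V[R_{ij}(z)]$ that is \emph{integrable in $x=\Re z$} (Proposition \ref{proposition:prop4}); the threshold $s>3$ comes precisely from the convergence of the resulting $y$-integral. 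Without an analogous bound on $\V[(e^{ikX_N})_{ij}]$ with explicit polynomial growth in $k$ of order strictly less than $2s-1$, the weighted Cauchy--Schwarz step you invoke does not close.
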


\begin{remark}
If the distribution of the entries of $W_N$ does not depend on $N,$ Theorem \ref{thm:main} proves that
$\sqrt{N} \*\left( f(X_N)_{ii} - \E\left( f(X_N)_{ii}\right) \right)$
converges in distribution to the sum of two independent random variables
$\frac{\alpha(f)}{\sigma}\* W_{ii}$ and $N(0, 2\*v^2(f))$  (in the Hermitian case, the second term is $N(0, v^2(f))$),
and for $i\not=j, \
\sqrt{N} \* \left( f(X_N)_{ij} - \E\left( f(X_N)_{ij}\right)\right)$
converges in distribution
to the sum of two independent random variables
$\frac{\alpha(f)}{\sigma} \*W_{ij}$ and $ N(0, d^2(f)),$
where in the Hermitian case $N(0, d^2(f))$ stands for the complex Gaussian random variable with 
with i.i.d real and imaginary parts $N(0, \frac{1}{2}\*d^2(f)).$  
This is exactly the way Theorems \ref{thm:real} and \ref{thm:herm} were formulated and proven in the i.i.d. case in \cite{PRS}.
\end{remark}

\begin{remark}
\label{zamechanie}
If $f:\R\to \R$ coincides on $[-2\sigma-\delta, 2\sigma+\delta]$  with a function  $\phi \in \mathcal{H}_s,$  for some $\delta>0$ and $s>3,$
then Theorem \ref{thm:main} holds for $(f(X_N))_{ij}-\E (fh(X_N))_{ij}, \ 1\leq i,j\leq m,$ where $h\in C^{\infty}_c(\R)$ is defined in (\ref{amerika}).
\end{remark}

If  one requires that the test function $f$ satisfies the same smoothness assumptions as in \cite{PRS}, then the extension of the results of 
\cite{PRS} to the non-i.i.d. setting mostly follows the outline of the proof in \cite{PRS}. 
%The main changes are the extension of the Central
%Limit Theorem for quadratic forms to the non-i.i.d. case which  essentially follows from the arguments of Bai and Yao \cite{BY}, Baik 
%and Silverstein \cite{CDF}, and Benaych-Georges, Guionnet, and Maida \cite{BGM} developed in the i.i.d. setting 
%(see the Appendix on the CLT for quadratic forms) and the 
%extension of the theorem of Bai and Yin (\cite{BYin}, \cite{B}) about the convergence of the largest eigenvalue of $X_N$ to the right edge of the 
%Wigner semicircle law to the non-i.i.d. setting (\ref{offdiagreal}-\ref{diagreal}) (see Proposition \ref{proposition:prop1} in Section \ref{sec:proofs}).
To relax the conditions of Thereoms \ref{thm:real} and \ref{thm:herm} on the test functions, we improve the estimate on the variance of the resolvent 
entries (see Proposition \ref{proposition:prop4}), and employ Proposition \ref{prop:variance}. 

We will denote throughout the paper by $const_i, Const_i, $ various positive constants
that may change from line to line.  Occasionally, we will drop the dependence on $N$ in the notations for the matrix entries.
Typically, we consider in detail only the real symmetric case as the proofs in the Hermitian case are very similar. Some parts of the proofs that are 
almost identical to the arguments in the i.i.d. case will be only sketched.

The rest of the paper is organized as follows. We prove several preliminary results in Section \ref{sec:prelim}, including Proposition 
\ref{prop:variance}. Section \ref{sec:estvar} is devoted to the bounds on the mathematical expectation and variance of the resolvent entries.
Theorem \ref{thm:main} is proved in Section \ref{sec:proofs}.  Finally, we discuss Central Limit Theorem for quadratic forms 
in the Appendix.

\section{ \bf{Preliminary Results}}
\label{sec:prelim}
We start with the following lemma.

\begin{lemma}
\label{lemma:1}
Let $X_N=\frac{1}{\sqrt{N}} W_N$ be a random real symmetric Wigner matrix
(\ref{offdiagreal}), (\ref{diagreal}) such that
the Lindeberg condition (\ref{lind1}) for the fourth moments of the off-diagonal entries
and the Lindeberg condition (\ref{diagreal1}) for the second moments of the diagonal entries
are satisfied.
Then there exists a random real symmetric Wigner matrix $\tilde{W}_N$ and a non-random positive sequence $\epsilon_N\to 0$ as $N \to \infty$ such that
\begin{align}
\label{offdiagrealw}
& \E (\tilde{W}_N)_{jk}=0, \ \V(\tilde{W}_N)_{jk}=\sigma^2, \ 1\leq j<k\leq N, \\
\label{offdiagrealw1}
& \ \sup_{N,j\not=k} \E (\tilde{W}_N)_{jk}^4<\infty, \\
\label{diagrealw1}
& \E (\tilde{W}_N)_{ii}=0, \ \ 1\leq i \leq N, \\
\label{diagrealw2}
& \sup_{i,N} \E |(\tilde{W}_N)_{ii}|^2 < \infty,\\
\label{wtildaw1}
& \sup \left(|(\tilde{W}_N)_{ij}|, \ 1\leq i,j\leq N \right) \leq \epsilon_N\*\sqrt{N}, \\
\label{wtildaw2}
& \P(W_N\not=\tilde{W}_N)\to 0, \ \text{as} \ N\to \infty.
\end{align}
\end{lemma}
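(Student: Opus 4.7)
The plan is to construct $\tilde W_N$ by truncating the entries of $W_N$ at the level $\epsilon_N\sqrt N$ for an appropriately chosen $\epsilon_N\to 0$, and matching the required moments via an auxiliary bounded modification on the tail event. The coupling is done on an enlarged probability space so that $W_N=\tilde W_N$ on the event that no entry of $W_N$ exceeds the truncation threshold.

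First, by a standard diagonal argument, the Lindeberg hypotheses (\ref{lind1}) and (\ref{diagreal1}) produce a sequence $\epsilon_N\downarrow 0$ satisfying both $L_N(\epsilon_N)/\epsilon_N^{4}\to 0$ and $l_N(\epsilon_N)/\epsilon_N^{2}\to 0$. Concretely, choose integers $N_k\uparrow\infty$ so that $L_N(1/k)\le k^{-5}$ and $l_N(1/k)\le k^{-3}$ for $N\ge N_k$, and set $\epsilon_N:=1/k$ on $N_k\le N<N_{k+1}$.

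On an enlarged probability space introduce, for each pair $i\le j$, an independent uniform $[0,1]$ random variable $U_{ij}$ independent of $W_N$, and set
$$
(\tilde W_N)_{ij}:=(W_N)_{ij}\,\indicator{|(W_N)_{ij}|\le\epsilon_N\sqrt N}\;+\;R_{ij}\,\indicator{|(W_N)_{ij}|>\epsilon_N\sqrt N},
$$
where $R_{ij}$ is a random variable built from $U_{ij}$, supported in $[-\epsilon_N\sqrt N,\epsilon_N\sqrt N]$, whose distribution is chosen so that $(\tilde W_N)_{ij}$ has mean zero and variance exactly $\sigma^2$ in the off-diagonal case (and mean zero with bounded second moment on the diagonal). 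A convenient explicit choice is a three-point distribution on $\{-\epsilon_N\sqrt N,\,0,\,\epsilon_N\sqrt N\}$ whose masses are solved from the two (resp.\ one) moment constraints. Extend $\tilde W_N$ to a symmetric matrix by $(\tilde W_N)_{ji}:=(\tilde W_N)_{ij}$. Conditions (\ref{offdiagrealw})--(\ref{wtildaw1}) are now built into the construction: the moments are enforced by the choice of $R_{ij}$, the entries are deterministically bounded by $\epsilon_N\sqrt N$, and $\sup_N\E(\tilde W_N)_{ij}^4<\infty$ because $|(\tilde W_N)_{ij}|\le\epsilon_N\sqrt N$ and, more finely, the truncated fourth moment of $(W_N)_{ij}$ is at most $m_4$.

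For (\ref{wtildaw2}), the inclusion
$$
\{W_N\neq\tilde W_N\}\subseteq\bigcup_{i\le j}\{|(W_N)_{ij}|>\epsilon_N\sqrt N\}
$$
combined with Markov's inequality yields
$$
\P(W_N\neq\tilde W_N)\le\frac{L_N(\epsilon_N)}{\epsilon_N^{4}}+\frac{l_N(\epsilon_N)}{\epsilon_N^{2}}\longrightarrow 0
$$
by the choice of $\epsilon_N$. The delicate point is ensuring the moment-matching equations for $R_{ij}$ are solvable uniformly in $(i,j,N)$: when $\P(|(W_N)_{ij}|>\epsilon_N\sqrt N)$ is very small, the conditional mean and second moment that $R_{ij}$ must realise could in principle exceed what its bounded support allows. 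This is controlled by the same rates $L_N(\epsilon_N)/\epsilon_N^{4}\to 0$ and $l_N(\epsilon_N)/\epsilon_N^{2}\to 0$ used above: one verifies via Cauchy--Schwarz that the required conditional second moment of $R_{ij}$ is $O(\epsilon_N^{2}N)$ uniformly, so the three-point distribution remains feasible for all sufficiently large $N$ (if necessary one inflates $\epsilon_N$ by a harmless multiplicative constant).
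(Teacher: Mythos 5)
Your overall strategy---truncate at level $\epsilon_N\sqrt N$ with $\epsilon_N$ chosen by a diagonal argument, repair the first two moments with a bounded auxiliary variable, and bound $\P(W_N\neq\tilde W_N)$ by the Lindeberg sums---is the same as the paper's, and both the choice of $\epsilon_N$ and the Markov bound for (\ref{wtildaw2}) are correct. However, the specific repair mechanism you propose is infeasible, and the difficulty you flag at the end and then dismiss is in fact fatal. You substitute $R_{ij}$ for $(W_N)_{ij}$ exactly on the event $A_{ij}=\{|(W_N)_{ij}|>\epsilon_N\sqrt N\}$, with $R_{ij}$ independent of $W_N$ and supported in $[-\epsilon_N\sqrt N,\epsilon_N\sqrt N]$. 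Writing $p=\P(A_{ij})$, the variance constraint forces
\begin{equation*}
p\,\E R_{ij}^2=\E\bigl[(W_N)_{ij}^2\,\indicator{|(W_N)_{ij}|>\epsilon_N\sqrt N}\bigr],
\qquad\text{i.e.}\qquad
\E R_{ij}^2=\E\bigl[(W_N)_{ij}^2\,\big|\,A_{ij}\bigr].
\end{equation*}
The right-hand side is strictly larger than $\epsilon_N^2 N$ whenever $p>0$, while the left-hand side is at most $\epsilon_N^2 N$ by the support constraint, so the moment-matching equations are never solvable. Nor can a multiplicative constant save this: Cauchy--Schwarz gives $\E[(W_N)_{ij}^2\mid A_{ij}]\le\sqrt{m_4/p}$, which blows up as $p\to0$ (the opposite of the uniform $O(\epsilon_N^2N)$ bound you assert), and there are admissible distributions---e.g.\ one entry taking values $\pm N$ with probability of order $N^{-4}$, which is compatible with (\ref{lind1}) and $m_4<\infty$---for which this conditional second moment is of order $N^2\gg\epsilon_N^2N$. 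The same problem affects the conditional mean when the tail is one-sided.

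The paper avoids this by decoupling the replacement event from the exceedance event: $(\tilde W_N)_{ij}$ is an independent mixture of the truncated variable $(\bar W_N)_{ij}$ with weight $1-w_{ij}$ and of a bounded variable $a_{i,j,N}$ with weight $w_{ij}:=\frac{\tau_{i,j,N}}{\sqrt N\,\epsilon_N}+\frac{\gamma_{i,j,N}^2}{N\epsilon_N^2}$, where $\tau_{i,j,N}$ and $\gamma_{i,j,N}^2$ are the mean and second-moment defects of the truncated entry (see (\ref{mo1})--(\ref{v2})). Because the mixture weight is chosen proportional to the defects, the variable $a_{i,j,N}$ only has to realize a mean of size at most $\tau_{i,j,N}/w_{ij}\le\sqrt N\,\epsilon_N$ and a second moment of size at most of order $\gamma_{i,j,N}^2/w_{ij}\le N\epsilon_N^2$, both achievable on the prescribed support; and the total weight $\sum_{i<j}w_{ij}$ is controlled by (\ref{lind3}), giving $\P(\bar W_N\neq\tilde W_N)\to0$. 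To fix your argument, replace the conditional substitution by such a tuned independent mixture (equivalently, enlarge the replacement event so that its probability dominates the moment defects rather than being forced to equal $\P(A_{ij})$).
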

An equivalent result holds in the Hermitian case.
\begin{proof}
It follows from (\ref{lind1}) and (\ref{diagreal1})
that there exists a non-random positive sequence $\epsilon_N \to 0$ as $N \to \infty,$ such that
\begin{align}
\label{lind3}
& \frac{1}{N^2\*\epsilon_N^4}\* \sum_{1\leq i<j\leq N} \E\left( |(W_N)_{ij}|^4 \*\indicator{|(W_N)_{ij}|\geq \epsilon_N\*\sqrt{N}}\right) \to 0.\\
\label{li}
&  \frac{1}{N\*\epsilon_N^2}\* \sum_{1\leq i\leq N} \E\left( |(W_N)_{ii}|^2 \*\indicator{|(W_N)_{ii}|\geq \epsilon_N\*\sqrt{N}}\right) \to 0.
\end{align}
One can always choose $\epsilon_N$ in such a way that it goes to zero sufficiently slow.
Define $\bar{W}_N$ by truncating the entries of $W_N$ at the level $\epsilon_N\*\sqrt{N},$ i.e.
\begin{equation}
\label{barw}
(\bar{W}_N)_{ij}= (W_N)_{ij} \* \indicator{|(W_N)_{ij}|\leq \epsilon_N\*\sqrt{N}}.
\end{equation}
It follows from (\ref{lind3}) and (\ref{li}) that 
\begin{equation}
\label{dodo}
\P(W_N\not=\bar{W}_N)\to 0, \ \text{as} \ N\to \infty.
\end{equation}
Let us now fix $i<j$ and consider the off-diagonal entry $(\bar{W}_N)_{ij}.$
We note that 
\begin{align}
\label{mo1}
& \tau_{i,j,N}:=|\E (\bar{W}_N)_{ij}| \leq  \E\left( |(W_N)_{ij}| \*\indicator{|(W_N)_{ij}|\geq \epsilon_N\*\sqrt{N}}\right) \\
\label{mo2}
& \leq \frac{1}{N^{3/2}\*\epsilon_N^3} \E\left( |(W_N)_{ij}|^4 \*\indicator{|(W_N)_{ij}|\geq \epsilon_N\*\sqrt{N}}\right), \\
\label{v1}
& \gamma_{i,j,N}^2:= \E |\bar{W}_N|^2_{ij}-\sigma^2= \E\left( |(W_N)_{ij}|^2 \*\indicator{|(W_N)_{ij}|\geq \epsilon_N\*\sqrt{N}}\right) \\
\label{v2}
& \leq \frac{1}{N \*\epsilon_N^2} \E\left( |(W_N)_{ij}|^4 \*\indicator{|(W_N)_{ij}|\geq \epsilon_N\*\sqrt{N}}\right).
\end{align}
Then we can constract $(\tilde{W}_N)_{ij}$ as a mixture of the random variable 
$(\bar{W}_N)_{ij}$ with weight $1-\frac{\tau_{i,j,N}}{\sqrt{N}\*\epsilon_N} -
\frac{\gamma_{i,j,N}^2}{N\*\epsilon_N^2}
$ and some random 
variable $a_{i,j,N}$ with weight $\frac{\tau_{i,j,N}}{\sqrt{N}\*\epsilon_N}+ \frac{\gamma_{i,j,N}^2}{N\*\epsilon_N^2}$ chosen so that 
\begin{align}
\label{tv1}
& |a_{i,j,N}|\leq \epsilon_N\*\sqrt{N}, \\
\label{tv2}
& \E (\tilde{W}_N)_{ij}=0,\\
\label{tv3}
& \E |(\tilde{W}_N)_{ij}|^2=\sigma^2.
\end{align}
It follows from our construction and (\ref{lind3}) that
\begin{equation}
\label{dor}
\sum_{1\leq i<j\leq N} \P\left((\bar{W}_N)_{ij}\not= (\tilde{W}_N)_{ij}\right) \leq 
\frac{2}{N^2\*\epsilon_N^4}\* \sum_{1\leq i<j\leq N} \E\left( |(W_N)_{ij}|^4 \*\indicator{|(W_N)_{ij}|\geq \epsilon_N\*\sqrt{N}}\right) \to 0.
\end{equation}

The diagonal case $i=j$ can be treated in a similar way.
We write
\begin{align}
\label{Mo1}
& \tau_{i,i,N}:=|\E (\bar{W}_N)_{ii}| \leq \E  \left( |(W_N)_{ii}| \*\indicator{|(W_N)_{ii}|\geq \epsilon_N\*\sqrt{N}}\right) \\
\label{Mo2}
& \leq \frac{1}{\sqrt{N}\*\epsilon_N} \E\left( |(W_N)_{ii}|^2 \*\indicator{|(W_N)_{ii}|\geq \epsilon_N\*\sqrt{N}}\right).
\end{align}
One then constructs $(\tilde{W}_N)_{ii}$ as a mixture of the random variable 
$(\bar{W}_N)_{ii}$ with weight $1-\frac{\tau_{i,i,N}}{\sqrt{N}\*\epsilon_N}$ 
and some random variable $a_{i,i,N}$ with weight $\frac{\tau_{i,i,N}}{\sqrt{N}\*\epsilon_N}$ chosen so that
\begin{align}
\label{tv4}
& |a_{i,i,N}|\leq \epsilon_N\*\sqrt{N}, \\
\label{tv5}
& \E (\tilde{W}_N)_{ii}=0.
\end{align}
Then
\begin{equation}
\label{dor1}
\sum_{1\leq i \leq N} \P\left((\bar{W}_N)_{ii}\not= (\tilde{W}_N)_{ii}\right) \leq 
\frac{1}{N\*\epsilon_N^2} \E\left( |(W_N)_{ii}|^2 \*\indicator{|(W_N)_{ii}|\geq \epsilon_N\*\sqrt{N}}\right)\to 0,
\end{equation}
as $N \to \infty.$

It follows from (\ref{dodo}), (\ref{dor}), and (\ref{dor1}) that (\ref{wtildaw2}) is satisfied. The equations (\ref{offdiagrealw}) and (\ref{diagrealw1})
follow from (\ref{tv2}), (\ref{tv3}), and (\ref{tv5}). The estimates (\ref{offdiagrealw1}) and (\ref{diagrealw2}) follow from the construction.
\end{proof}
The proof of the next result is very similar to the proof of Lemma \ref{lemma:1} and is left to the reader.
\begin{lemma}
\label{lemma:2}
Let $W_N$ be a random real symmetric Wigner matrix
(\ref{offdiagreal}), (\ref{diagreal}), and
let (\ref{LIND1}) is satisfied for $1\leq i\leq m,$ where $m$ is some fixed positive integer.
Then there exists a random real symmetric Wigner matrix $T_N$ and a non-random positive sequence $\epsilon_N\to 0$ as $N \to \infty$ such that
\begin{align}
\label{mir0}
& (T_N)_{jk}= (W_N)_{jk}, \  m+1\leq j,k\leq N, \\
\label{mir1} 
& \P ((T_N)_{ik}= (W_N)_{ik}, 1\leq i\leq m, \ 1\leq k\leq N)\to 1, \ \text{as} \ N \to \infty, \\
\label{mir2}
& \E (T_N)_{ik}=0, \ 1\leq i\leq m, \  1\leq k \leq N, \\
\label{mir3}
& \V(T_N)_{ik}=\sigma^2, \ i\not=k, 1\leq i\leq m, \  1\leq k \leq N, \ \sup_{1\leq i \leq m, N} \V(T_N)_{ii} <\infty, \\
\label{mir4}
& \ \sup_{N,i\not=k: 1\leq i\leq m, \ 1\leq k\leq N} \E (T_N)_{ik}^4<\infty, \\
\label{mir5}
& \sup_{1\leq i\leq m, 1\leq k \leq N}  |(T_N)_{ik}|  \leq \epsilon_N\*N^{1/4}.
\end{align}
\end{lemma}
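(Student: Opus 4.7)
The plan is to replicate the construction in the proof of Lemma \ref{lemma:1} with two adjustments dictated by the statement: only the first $m$ rows (and, by symmetry, the first $m$ columns) of $W_N$ will be altered, and the truncation level is $\epsilon_N\*N^{1/4}$ in place of $\epsilon_N\*\sqrt{N}$, to match the $N^{1/4}$ scaling in condition $(A_1)$.

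Since $m$ is fixed and $(A_1)$ provides $\mathcal{L}_{i,N}(\epsilon)\to 0$ for every $\epsilon>0$ and each $1\leq i\leq m$, I would pick a deterministic sequence $\epsilon_N\to 0$ slowly enough that $\mathcal{L}_{i,N}(\epsilon_N)/\epsilon_N^4\to 0$ for all $1\leq i\leq m$ and, simultaneously, $\epsilon_N^{-2}\*N^{-1/2}\to 0$, so that by Markov the diagonal truncations at level $\epsilon_N\*N^{1/4}$ contribute $o(1)$. Then I would define $\bar W_N$ by keeping the lower-right $(N-m)\times(N-m)$ block equal to that of $W_N$ and truncating every other entry at level $\epsilon_N\*N^{1/4}$,
\begin{equation*}
(\bar W_N)_{ik}=(W_N)_{ik}\*\indicator{|(W_N)_{ik}|\leq \epsilon_N\*N^{1/4}},\qquad 1\leq i\leq m,\ 1\leq k\leq N,
\end{equation*}
with $(\bar W_N)_{ki}=(\bar W_N)_{ik}$ imposed by symmetry. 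A Markov and union-bound computation then gives
\begin{equation*}
\sum_{i=1}^m\sum_{k=1}^N \P\bigl(|(W_N)_{ik}|\geq \epsilon_N\*N^{1/4}\bigr)\leq \sum_{i=1}^m \frac{\mathcal{L}_{i,N}(\epsilon_N)}{\epsilon_N^4}+o(1)\to 0,
\end{equation*}
so $\bar W_N$ coincides with $W_N$ on the first $m$ rows with probability tending to one.

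Next one recenters and fixes the variance by a mixture, in direct analogy with the argument between (\ref{mo1}) and (\ref{dor1}). For off-diagonal pairs $(i,k)$ with $1\leq i\leq m$ and $k\neq i$, the mean and variance defects $\tau_{i,k,N}:=|\E(\bar W_N)_{ik}|$ and $\gamma_{i,k,N}^2:=\E|(\bar W_N)_{ik}|^2-\sigma^2$ are controlled by the truncated fourth moment divided by $(\epsilon_N\*N^{1/4})^3$ and $(\epsilon_N\*N^{1/4})^2$ respectively. Define $(T_N)_{ik}$ as the mixture of $(\bar W_N)_{ik}$ with weight $1-\tau_{i,k,N}/(\epsilon_N\*N^{1/4})-\gamma_{i,k,N}^2/(\epsilon_N\*N^{1/4})^2$ and an auxiliary variable $a_{i,k,N}\in[-\epsilon_N\*N^{1/4},\epsilon_N\*N^{1/4}]$ of complementary weight, chosen to enforce $\E(T_N)_{ik}=0$ and $\V(T_N)_{ik}=\sigma^2$. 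For each of the $m$ diagonal entries, a similar mixture is used to restore zero mean only, and the supremum bound on $\V(T_N)_{ii}$ in (\ref{mir3}) then follows automatically from $\sup_{i,N}\E|(W_N)_{ii}|^2<\infty$.

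Once the construction is in place, the conclusions read off directly: (\ref{mir0}) by definition; (\ref{mir1}) because the total probability of the mixture ever drawing its auxiliary component is bounded by a further multiple of $\sum_i \mathcal{L}_{i,N}(\epsilon_N)/\epsilon_N^4+o(1)$; (\ref{mir2}) and the off-diagonal part of (\ref{mir3}) by the mixture conditions; (\ref{mir5}) from the truncation together with $|a_{i,k,N}|\leq \epsilon_N\*N^{1/4}$; and (\ref{mir4}) from $\E(\bar W_N)_{ik}^4\leq m_4$ plus the uniform bound on $a_{i,k,N}^4$. The main obstacle, though it is bookkeeping rather than analysis, is preserving symmetry so that each unordered pair $(i,k)$ with $\min(i,k)\leq m$ is processed only once, with $(T_N)_{ki}:=(T_N)_{ik}$.
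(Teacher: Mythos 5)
Your proposal is correct and is essentially the argument the paper intends: the paper itself states that the proof of Lemma \ref{lemma:2} is "very similar to the proof of Lemma \ref{lemma:1} and is left to the reader," and your adaptation (truncating only the first $m$ rows/columns at level $\epsilon_N N^{1/4}$, choosing $\epsilon_N\to 0$ slowly so that $\mathcal{L}_{i,N}(\epsilon_N)/\epsilon_N^4\to 0$ and $\epsilon_N^{-2}N^{-1/2}\to 0$, then recentering and restoring the variance by the same mixture device) is exactly that adaptation. The bookkeeping for (\ref{mir1})--(\ref{mir5}) checks out, including the fourth-moment bound, where the auxiliary component contributes at most weight times $(\epsilon_N N^{1/4})^4$, which is $O(m_4)$.
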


The next Proposition is essentially due to Bai and Yin (see e.g. \cite{BYin}, \cite{B}).
\begin{proposition}
\label{proposition:prop1}
Let $X_N=\frac{1}{\sqrt{N}} W_N$ be a random real symmetric (Hermitian) Wigner matrix
(\ref{offdiagreal}), (\ref{diagreal}) (respectively (\ref{offdiagherm1}-\ref{diagherm})  such that
the Lindeberg type condition (\ref{lind1}) for the fourth moments of the off-diagonal entries and the
Lindeberg type condition (\ref{diagreal1}) for the second moments of the diagonal entries
are satisfied.
Then 
\begin{equation}
\label{byresult}
\|X_N\| \to 2\*\sigma
\end{equation}
in probability as $N \to \infty.$
\end{proposition}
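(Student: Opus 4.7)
The plan is to reduce to a truncated matrix and then apply the trace moment method in the spirit of Bai and Yin. First, by Lemma~\ref{lemma:1} (and its Hermitian analogue), we may replace $W_N$ by a truncated Wigner matrix $\tilde{W}_N$ satisfying $|(\tilde{W}_N)_{ij}| \leq \epsilon_N\sqrt{N}$ for some non-random sequence $\epsilon_N\to 0$, with centered entries, variance $\sigma^2$ off the diagonal and uniformly bounded fourth moments, and such that $\P(W_N\neq \tilde{W}_N)\to 0$. Since $\|X_N\|=\|\tilde{X}_N\|$ off an event of vanishing probability, where $\tilde{X}_N=\tilde{W}_N/\sqrt{N}$, it suffices to establish $\|\tilde{X}_N\| \to 2\sigma$ in probability.

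For the upper bound $\limsup_{N\to\infty}\|\tilde{X}_N\| \leq 2\sigma$ in probability, I would use Markov's inequality with a high even moment: for a suitable sequence $k_N\to\infty$ with $k_N=o(N^{1/6})$ (growing slowly enough that Stirling-type corrections are negligible), one has
\[
\P(\|\tilde{X}_N\| > 2\sigma+\delta) \leq (2\sigma+\delta)^{-2k_N} \* \E\,\Tr\,\tilde{X}_N^{2k_N}.
\]
The expectation expands as a sum over closed walks of length $2k_N$ on $\{1,\ldots,N\}$, and because the off-diagonal variances are uniformly $\sigma^2$, the contribution from walks that traverse each edge exactly twice yields $N\*\sigma^{2k_N} C_{k_N}(1+o(1))$, where $C_{k_N}$ is the Catalan number, which produces the leading asymptotic $N\*(2\sigma)^{2k_N}$. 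The remaining walks, in which some edge is traversed more than twice or a self-loop is used, are controlled by the uniform truncation $|(\tilde{W}_N)_{ij}|\leq \epsilon_N\sqrt{N}$ together with the uniform bounds (\ref{offdiagrealw1}) and (\ref{diagrealw2}); these are exactly the ingredients the Bai--Yin counting argument uses, and they yield that the subleading walk classes contribute $o((2\sigma)^{2k_N})$ per vertex. Thus $\E\,\Tr\,\tilde{X}_N^{2k_N}\leq N\*(2\sigma)^{2k_N}(1+o(1))$, and choosing $k_N\to\infty$ suitably produces the desired probability bound for every fixed $\delta>0$.

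For the matching lower bound, one invokes the Wigner semicircle law, which is valid under the Lindeberg type hypotheses (\ref{lind1}) and (\ref{diagreal1}) (see, e.g., \cite{B}): the empirical spectral measure $\mu_{X_N}$ converges weakly in probability to $\mu_{sc}$, whose support is $[-2\sigma,2\sigma]$. Testing against a continuous function supported near the right edge and taking $N\to\infty$ forces the presence of eigenvalues arbitrarily close to $2\sigma$, so $\liminf_{N\to\infty}\lambda_N \geq 2\sigma$ in probability, where $\lambda_N$ is the largest eigenvalue; combined with the upper bound this yields the claim.

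The main obstacle is the combinatorial bookkeeping for non-identically distributed entries in the trace moment expansion, but this is mitigated by the fact that the Bai--Yin estimates are driven only by uniform bounds on variances and higher moments. Since the truncated matrix $\tilde{W}_N$ satisfies precisely such uniform bounds by construction, no independence or identical distribution assumption beyond the ones already built into Lemma~\ref{lemma:1} is required, and the non-i.i.d. case reduces to the same combinatorial estimates used in the classical setting.
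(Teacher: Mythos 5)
Your proposal is correct and follows essentially the same route as the paper: truncate via Lemma~\ref{lemma:1}, apply the Bai--Yin trace moment method with a slowly growing power $k_N$ (noting that the combinatorial estimates require only uniform moment bounds and the truncation level, not identical distribution), and obtain the matching lower bound from the semicircle law. The only cosmetic difference is that the paper first zeroes out the diagonal entries (changing the norm by at most $\epsilon_N$) and runs Borel--Cantelli to get almost sure convergence for $\tilde{W}_N$, whereas you work directly in probability, which suffices.
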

\begin{remark}
Bai and Yin (\cite{BYin}, \cite{B}) considered the i.i.d. case and 
proved the almost sure convergence.  However, convergence in probability is enough for our purposes.
\end{remark}
\begin{proof}
Because of Lemma \ref{lemma:1}, it is enough to prove (\ref{byresult}) for $\tilde{W}_N.$ Moreover, we can modify 
$\tilde{W}_N$ by making all diagonal entries equal to zero.  Clearly this changes the norm of $\tilde{W}_N$ at most by $\epsilon_N.$
The proof uses the Method of Moments.
It is enough to show that there exists a sequence $k_N, \ N\geq 1, $ such that
\begin{equation}
\label{ust1}
\frac{k_N}{\log N} \to \infty, \ \ \frac{\epsilon_N^{1/3} \*k_N}{\log N} \to 0, \ \text{as} \ N\to \infty,
\end{equation}
where $\epsilon_N$ is the same as in Lemma \ref{lemma:1}, and for any constant $z>2\sigma$
\begin{equation}
\label{ust2}
\sum_N \frac{\Tr\left((\tilde{W}_N/\sqrt{N})^{2\*k_N}\right)}{z^{2\*k_N}} <\infty.
\end{equation}
The proof of (\ref{ust2}) in (\cite{BYin}) is combinatorial in nature and does not use the fact that the entries are identically distributed.
By Markov inequality, it follows from (\ref{ust2}) that
\begin{equation*}
\sum_N \P(\|\tilde{W}_N/\sqrt{N}\|\geq z)<\infty,
\end{equation*}
for any fixed $z>2\*\sigma.$ Therefore, by Borel-Cantelli lemma, we have
\begin{equation*}
\P(\|\tilde{W}_N/\sqrt{N}\|\geq z \ i.o.)=0,
\end{equation*}
which together with the Semicircle Law implies that
\begin{equation*}
\|\tilde{W}_N/sqrt{N}\|\to 2\sigma \ \text{a.s.}
\end{equation*}
\end{proof}
%%%%%%%%%%%%%%%%%%%%%%%%%%%%%%%%%%%%%%%%%%%%%%%%%%%%%%%%%%%%%%%%%%%%%%%%%%%%%%%%%%%%%%%%%%%%%%%%%%%%%%%%%%%%%%%%%%%%%%%%%%%%%%%%%%%%%%%%%%%%%%%%%%%%%%%%%%%
%%%%%%%%%%%%%%%%%%%%%%%%%%%%%%%%%%%%%%%%%%%%%%%%%%%%%%%%%%%%%%%%%%%%%%%%%%%VARIANCE BOUNDS%%%%%%%%%%%%%%%%%%%%%%%%%%%%%%%%%%%%%%%%%%%%%%%%%%%%%%%%%%%%%%%%%
%%%%%%%%%%%%%%%%%%%%%%%%%%%%%%%%%%%%%%%%%%%%%%%%%%%%%%%%%%%%%%%%%%%%%%%%%%%%%%%%%%%%%%%%%%%%%%%%%%%%%%%%%%%%%%%%%%%%%%%%%%%%%%%%%%%%%%%%%%%%%%%%%%%%%%%%%%%

The rest of this section is devoted to the bounds on 
$ \V [\int_{-\infty}^{\infty} f(x) \* \mu(dx, \omega)], $ where $\mu(dx, \omega)$ is a random measure on $(\R, \mathcal{B})$ 
and $ \mathcal{B}$ is the Borel $\sigma$-algebra on $\R,$
provided one can control
$\V [\int_{-\infty}^{\infty} \Im \frac{1}{z-x} \* \mu(dx, \omega)]$ for $\Im z \not=0.$  We follow the ideas of Proposition 1 in \cite{Sh} and 
Proposition 3.5 in \cite{Joh}. In particular, our computations below are close to those in \cite{Sh}, where
$\mu(dx, \omega)$ was taken to be the empirical spectral distribution of a random  matrix. 

Let $(\Omega, \mathcal{F})$ be a measurable space, and  
$(\Omega', \mathcal{F'}, \mathcal{P})$ be a probability space such that $\Omega'=\R \times \Omega,$ and $\mathcal{F'}$ is generated by 
$\mathcal{B} \times \mathcal{F}.$  We denote an elementary outcome by $\omega'=(x, \omega) \in \R \times \Omega, $ and consider a random variable
%\begin{equation*}
$ X(\omega')=x.$
%\end{equation*}
When it does not lead to ambiguity, we will denote the sub-algebra $\{ \R\times D, \ D\in \mathcal{F} \}$ by $\mathcal{F}.$
Let us denote by $\mu(B, \omega), \ B \in \mathcal{B}, \ \omega \in \Omega,$ a  regular conditional distribution for $X$ given $\mathcal{F},$ i.e.
\begin{align}
\label{rcd1}
& \text{For each} \ \ B\subset \R, \ B \in \mathcal{B}, \ \omega \to \mu(B, \omega) \ \text{is a version of} \ \mathcal{P}(X \in B|\mathcal{F}). \\
\label{rcd2}
& \text{For a.e.} \ \omega, \ B\to \mu(B, \omega) \ \text{is a probability measure on} \ (\R, \mathcal{B}).
\end{align}
Such regular conditional distribution for $X$ always exists (see e.g. \cite{Dur}).  In particular, if $f:\R\to \C$ is such that
\begin{equation}
\label{kon}
\E |f(X)|<\infty, 
\end{equation}
then
\begin{equation*}
\E(f(X)|\mathcal{F})= \int_{-\infty}^{+\infty} f(x) \* \mu(dx, \omega) \ a.s.
\end{equation*}

The following proposition holds.

\begin{proposition}
\label{prop:variance}
Let $E |X| < \infty, \ s>\frac{1}{2},$ and  $f \in \mathcal{H}_s, $  where $\mathcal{H}_s$ is defined in (\ref{sobolev}). Then
\begin{align*}
& \V [ \int f(x) \* \mu(dx, \omega)]=\V [ \E (f(X)|\mathcal{F})] \\
& \leq Const_s \* \|f\|_s^2 \* \int_0^{\infty} dy\* e^{-y}\* y^{2s-1} \* \int_{-\infty}^{\infty} \* dx
V [\int_{\infty}^{+\infty} \Im \frac{1}{t-x-iy}\* \* \mu(dt, \omega)]. \\
\end{align*}
where $Const_s$ is some absolute constant that depends only on $s.$
\end{proposition}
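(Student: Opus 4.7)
The plan is to reduce the variance bound for $M(\omega):=\int f(t)\,\mu(dt,\omega)$ to a variance bound on the Poisson extension of $\mu$, by combining a Fourier representation, Cauchy--Schwarz on the spectral variable, and Plancherel's identity. Writing $c_{k}(\omega):=\int e^{ikt}\mu(dt,\omega)$, Fourier inversion and Fubini give $M(\omega)=(2\pi)^{-1/2}\int \hat f(k)\,c_{k}(\omega)\,dk$. Since $\mu(\cdot,\omega)$ is a real probability measure, $c_{-k}=\overline{c_{k}}$ and $|c_{k}|\leq 1$, so $\V c_{k}\leq 1$ and $\V c_{-k}=\V c_{k}$.

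The first step is to apply Cauchy--Schwarz after the splitting $\hat f(k)=[\hat f(k)(1+2|k|)^{s}]\cdot[(1+2|k|)^{-s}]$:
\begin{equation*}
|M-\E M|^{2}\leq \frac{\|f\|_{s}^{2}}{2\pi}\int_{\R}(1+2|k|)^{-2s}|c_{k}-\E c_{k}|^{2}\,dk.
\end{equation*}
Taking expectation gives
\begin{equation*}
\V M\leq \frac{\|f\|_{s}^{2}}{2\pi}\int_{\R}(1+2|k|)^{-2s}\,\V c_{k}\,dk.
\end{equation*}

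The second step is to introduce the desired $e^{-y}y^{2s-1}$ weight through the gamma identity
\begin{equation*}
(1+2|k|)^{-2s}=\frac{1}{\Gamma(2s)}\int_{0}^{\infty}e^{-(1+2|k|)y}\,y^{2s-1}\,dy,
\end{equation*}
and to evaluate the resulting inner integral $\int_{\R}e^{-2|k|y}\,\V c_{k}\,dk$ via Plancherel. Set $h_{y}(x,\omega):=\int \Im(t-x-iy)^{-1}\mu(dt,\omega)=\pi(\mu*P_{y})(x)$, where $P_{y}$ is the Poisson kernel on $\R$. Its Fourier transform in $x$ equals $\pi\,\hat{\mu}(k)\,e^{-|k|y}=(\pi/\sqrt{2\pi})\,c_{-k}(\omega)\,e^{-|k|y}$; applying Plancherel to $h_{y}-\E h_{y}$ and using $\V c_{-k}=\V c_{k}$ yields
\begin{equation*}
\int_{\R}\V h_{y}(x)\,dx=\frac{\pi}{2}\int_{\R}e^{-2|k|y}\,\V c_{k}\,dk.
\end{equation*}
Substituting back and using Fubini produces the claimed inequality with explicit $Const_{s}=(\pi^{2}\Gamma(2s))^{-1}$.

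The obstacles are purely technical. I need to justify the Fubini interchanges throughout, for which $\V c_{k}\leq 1$ combined with $s>1/2$ suffices since then $(1+2|k|)^{-2s}$ is integrable. I also need Plancherel to apply to $h_{y}(\cdot,\omega)$; this follows because for each fixed $y>0$ one has $\|h_{y}\|_{\infty}\leq 1/y$ and $\|h_{y}\|_{L^{1}}=\pi$ uniformly in $\omega$, so $h_{y}\in L^{2}(\R)$. The hypothesis $s>1/2$ enters only through this integrability, exactly at the threshold for the bound to be finite. The assumption $\E|X|<\infty$ plays no essential role beyond ensuring $X$ is integrable: the Sobolev embedding $\mathcal{H}_{s}\subset L^{\infty}$ for $s>1/2$ already guarantees $f$ is bounded, so $f(X)$ is automatically integrable.
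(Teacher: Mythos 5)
Your proof is correct, and while it follows the same skeleton as the paper's argument (Fourier representation of $\int f\,d\mu$, weighting by $(1+2|k|)^{s}$, reduction to the diagonal covariance $C(k,k)=\V c_{k}$, and the gamma identity $\int_{0}^{\infty}e^{-(1+2|k|)y}y^{2s-1}dy=\Gamma(2s)(1+2|k|)^{-2s}$), you execute the two technical steps differently. For the reduction to the diagonal, the paper views $K(k_{1},k_{2})=C(k_{1},k_{2})(1+2|k_{1}|)^{-s}(1+2|k_{2}|)^{-s}$ as a non-negative definite trace-class integral operator and uses $\|K\|\leq \Tr K$; your pointwise Cauchy--Schwarz in $k$ followed by taking expectations yields exactly the same bound $\frac{1}{2\pi}\|f\|_{s}^{2}\int C(k,k)(1+2|k|)^{-2s}dk$ by elementary means. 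The more substantial divergence is in the identity $\int_{0}^{\infty}e^{-y}y^{2s-1}\int_{\R}\V[h_{y}(x)]\,dx\,dy=\frac{\pi}{2}\Gamma(2s)\int_{\R}C(k,k)(1+2|k|)^{-2s}dk$: the paper obtains it through a formal delta-function computation rigorized by truncating to $[-A,A]$, splitting near and far from the diagonal $k_{1}=k_{2}$, integrating by parts, and invoking a separate lemma that $C$ has bounded continuous first partial derivatives --- which is precisely where the hypothesis $\E|X|<\infty$ enters. Your route applies Plancherel to the Poisson extension $h_{y}(\cdot,\omega)\in L^{1}\cap L^{\infty}\subset L^{2}$ for each fixed $\omega$ and $y>0$ and then integrates in $\omega$ by Tonelli, which is shorter, avoids the limiting argument entirely, and shows that the moment hypothesis $\E|X|<\infty$ is not actually needed for the conclusion (boundedness of $f$, guaranteed by $s>1/2$, already makes $\E(f(X)|\mathcal{F})$ well defined). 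Your explicit constant $(\pi^{2}\Gamma(2s))^{-1}$ agrees with what the paper's computation yields.
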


\begin{proof}
Since $s>\frac{1}{2}, $ it follows from (\ref{sobolev}) that $\hat{f} \in L^1(\R)$ which implies that  
$f\in C_0(\R),$ the space of continuous functions vanishing at infinity.
In particular, (\ref{kon}) holds and $ \E (f(X)|\mathcal{F})$ is well defined. Since  $\E (e^{i\*k\*X}|\mathcal{F}), \ k\in \R,$ is $L^1$ continuous 
family of bounded random variables, one can write
\begin{equation}
\label{ap1}
\E (f(X)|\mathcal{F}) = \frac{1}{\sqrt{2\pi}} \* \int_{-\infty}^{\infty} \hat{f}(k) \* \E (e^{i\*k\*X}|\mathcal{F}) \* dk.
\end{equation}
Then
\begin{equation}
\label{ap2}
\V [ \E (f(X)|\mathcal{F})]= \frac{1}{2\pi} \* \int_{-\infty}^{\infty} \*\int_{-\infty}^{\infty} \* \hat{f}(k_1) \* \overline{\hat{f}(k_2)} \* C(k_1, k_2) \*dk_1dk_2,
\end{equation}
where
\begin{equation}
\label{cov}
C(k_1, k_2)=\Cov \left(\E (e^{i\*k_1\*X}|\mathcal{F}), \E (e^{i\*k_2\*X}|\mathcal{F})\right).
\end{equation}
One can rewrite the r.h.s. of (\ref{ap2}) as
\begin{equation}
\label{ap3}
\frac{1}{2\pi} \* \int_{-\infty}^{\infty} \*\int_{-\infty}^{\infty} \* \hat{f}(k_1) \*(1+2|k_1|)^s \* \overline{\hat{f}(k_2)}\*(1+2|k_2|)^s 
\* K(k_1, k_2) \*dk_1dk_2,
\end{equation}
where
\begin{equation}
\label{ap4}
K(k_1, k_2)= C(k_1, k_2) \*(1+2|k_1|)^{-s} \* \*(1+2|k_2|)^{-s}.
\end{equation}
Therefore,
\begin{equation}
\label{ap5}
\V [ \E (f(X)|\mathcal{F})] \leq  \frac{1}{2\pi}  \* \|f\|^2_s \* \|K\|,
\end{equation}
where $\|K\| $ denotes the operator norm of the integral operator
\begin{equation*}
K:L^2(\R)\to L^2(\R), \ (K\*g)(x)=\int_{-\infty}^{\infty} K(x,y)\*g(y)\*dy.
\end{equation*}
It follows from (\ref{cov}) and (\ref{ap4}) that $K$ is a non-negative definite operator.  Since $C(k_1, k_2)$ is a bounded continuous function
on $\R^2,$ the operator $K$ is trace class and
\begin{equation}
\label{ap6}
\|K\| \leq \Tr K=\int_{-\infty}^{\infty} K(u,u) \* du.
\end{equation}
Thus,
\begin{equation}
\label{ap8}
\V [ \E (f(X)|\mathcal{F})] \leq \frac{1}{2\pi} \* \|f\|^2_s \* \int_{-\infty}^{\infty} C(k,k) \* (1+2\*|k|)^{-2s} \* dk.
\end{equation}
Let us fix $z=x+iy, \ y\not=0,$ and consider $\Im \frac{1}{\lambda-z}$ as a function of $\lambda.$ Its Fourier tranform 
is given by $\frac{\sqrt{\pi}}{\sqrt{2}}\* e^{-|ky|-ikx}.$  Therefore,
\begin{equation}
\label{ap9}
V [\E ( \Im(X-x-iy)^{-1}|\mathcal{F})]= \frac{1}{4}\* \int_{-\infty}^{\infty} \*\int_{-\infty}^{\infty} \* e^{-(|k_1|+|k_2|)\* |y|} \* 
e^{i\*(k_2-k_1)\*x} \* C(k_1, k_2) \*dk_1dk_2.
\end{equation}
Formally, taking into account
\begin{equation*}
\int_{-\infty}^{\infty} e^{i\*(k_2-k_1)\*x} \* dx = 2\*\pi\* \delta(k_2-k_1),
\end{equation*}
we obtain
\begin{equation}
\label{ap10}
\int_{-\infty}^{\infty} V [\E ( \Im(X-x-iy)^{-1}|\mathcal{F})]\*dx = \frac{\pi}{2}\* 
\int_{-\infty}^{\infty} \*\int_{-\infty}^{\infty} \* e^{-(|k_1|+|k_2|)\* |y|} \* 
\delta(k_2-k_1)\* C(k_1, k_2) \*dk_1dk_2.
\end{equation}
Since
\begin{equation*}
\int_0^{+\infty} dy\* e^{-y}\* y^{2s-1} \* e^{-2\*|k|\* |y|}= \Gamma(2s)\* (1+2\*|k|)^{-2s},
\end{equation*}
we conclude that 
\begin{equation}
\label{ap11}
\int_0^{\infty} dy\* e^{-y}\* y^{2s-1} \* \int_{-\infty}^{\infty} 
V [\E ( \Im(X-x-iy)^{-1}|\mathcal{F})] \* dx = \frac{\pi}{2}\* \Gamma(2s) \* \int_{-\infty}^{\infty} C(k,k) \* (1+2\*|k|)^{-2s} \* dk.
\end{equation}
The bound on $\V [ \E (f(X)|\mathcal{F})]$ in Proposition \ref{prop:variance} now follows from (\ref{ap8}) and (\ref{ap10}).

To make the steps (\ref{ap10}-\ref{ap11}) rigorous,
we first restrict integration in (\ref{ap10}) to $[-A, A],$ and then let $A\to \infty.$
It follows from (\ref{ap9}) that
\begin{align}
\label{ap100}
& \int_{-A}^A  V [\E ( \Im(X-x-iy)^{-1}|\mathcal{F})]\*dx \\
\label{ap101}
& = \frac{1}{2}\* 
\int_{-\infty}^{\infty} \*\int_{-\infty}^{\infty} \* e^{-(|k_1|+|k_2|)\* |y|} \* 
\frac{\sin(A\*(k_2-k_1))}{k_2-k_1}\* C(k_1, k_2) \*dk_1dk_2.
\end{align}
Multiplying (\ref{ap101}) by $e^{-y}\* y^{2s-1}$ and integrating over $y \in (0, +\infty)$, we obtain
\begin{align}
\label{ap102}
& \int_0^{+\infty} dy \* e^{-y}\* y^{2s-1}  \*  \int_{-A}^A  dx \* V [\E ( \Im(X-x-iy)^{-1}|\mathcal{F})] \\
\label{ap103} 
& =  \frac{1}{2}\*\Gamma(2s) \*
\int_{-\infty}^{\infty} \*\int_{-\infty}^{\infty} \* (1+|k_1|+|k_2|)^{-2s} \* 
\frac{\sin(A\*(k_2-k_1))}{k_2-k_1}\* C(k_1, k_2) \*dk_1dk_2.
\end{align}
We note that the integrand in (\ref{ap103}) is absolutely integrable over $\R^2$ for $s>\frac{1}{2},$  so the last step is justified by the 
Foubini theorem.
Since $E |X|<\infty, $  it follows from (\ref{cov}) that the kernel $C(k_1,k_2)$ has bounded continuous first partial derivatives 
(see Lemma \ref{lemma:cov}) below. We split the integral in (\ref{ap103}) into two, over $$ S:=\{(k_1,k_2): |k_2-k_1|< A^{-\epsilon} \}$$ and over 
$\R^2\setminus S.$
For $(k_1,k_2) \in S, $ we use 
\begin{equation*}
|C(k_1, k_2)\*(1+|k_1|+|k_2|)^{-2s}  -C(k_1, k_1)\* (1+2\*|k_1|)^{-2s}   | \leq const \* |k_2-k_1| \* (1+2\*|k_1|)^{-2s}
\end{equation*}
which implies that the integral over $S$ equals to
\begin{equation}
\label{ap199}
\frac{1}{2}\*\Gamma(2s) \* \int_{-A^{-\epsilon}}^{A^{-\epsilon}}  \* dt  \* \frac{\sin(A\*t)}{t} \*\int_{-\infty}^{\infty} \* dk 
C(k,k) \* (1+2\*|k|)^{-2s}  + o(1).
\end{equation}
where we made the change of variables $(k_1, k_2)\to (t=k_2-k_1, k=k_1).$

To estimate the integral over $\R^2\setminus S,$  we restrict our attention to the quadrant $k_1\geq 0, k_2 \geq 0.$ The other three cases are similar.
Denote $C_1(t,u)=C(k_1, k_2), $ where $u=k_1+k_2$ and $t=k_2-k_1.$
We have to estimate
\begin{equation}
\label{ap200}
\int_0^{\infty}  \* du \*\int_{A^{-\epsilon}}^u \* dt \* (1+u)^{-2s} \* \frac{\sin(A\*t)}{t}\* C_1(t,u) .
\end{equation}
Integrating by parts with respect to $t,$ we obtain
\begin{align}
\label{ap201}
& \int_{A^{-\epsilon}}^u \* \frac{\sin(A\*t)}{t}\* C_1(t,u) \* dt
= \int_{A^{-\epsilon}}^u \* \frac{\cos(A\*t)}{A} \* \left(\frac{\partial C_1(t,u)}{\partial t} \* \frac{1}{t} - C_1(t,u)\* \frac{1}{t^2} \right) \\
\label{ap202}
& - \frac{\cos(A\*t)}{A \* t} \* C_1(t,u)\big|^u_{A^{-\epsilon}}.
\end{align}
It is not difficult to see that the r.h.s. of (\ref{ap201}) is bounded in absolute value by $ const \* \frac{1}{A}\* (|\log u| + A^{\epsilon})$ and
(\ref{ap202}) is bounded in absolute value by $ const \* A^{-1+ \epsilon}.$  Therefore, ther integral over $\R^2\setminus S $ goes to zero as 
$A \to \infty.$

Finally, we note that the term in (\ref{ap199}) converges to
\begin{equation}
\Gamma(2s) \* \frac{\pi}{2}\* \int_{-\infty}^{\infty} 
C(k,k) \* (1+2\*|k|)^{-2s}  \* dk.
\end{equation}
This finishes the proof of Proposition \ref{prop:variance}, modulo Lemma \ref{lemma:cov} below.
\end{proof}
In the proof of Proposition \ref{prop:variance}, we used the fact that $C(k_1,k_2),$ defined in (\ref{cov}), has continuous bounded first partial 
derivatives.  This is the statement of the following lemma.
\begin{lemma}
\label{lemma:cov}
Let $\E |X| <\infty, $ and $C(k_1, k_2)$ be defined as in (\ref{cov}).  Then $C(k_1, k_2)$ has continuous bounded first partial 
derivatives. 
\end{lemma}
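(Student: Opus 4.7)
The plan is to differentiate under the conditional expectation and control everything by the dominating function $|X|$, which is integrable by hypothesis. Write $U(k,\omega) := \E(e^{ikX}|\mathcal{F})(\omega) = \int e^{ikx}\mu(dx,\omega)$, so that $C(k_1,k_2) = \E[U(k_1)\overline{U(k_2)}] - \E U(k_1) \cdot \overline{\E U(k_2)}$. I would prove the result by showing:
(a) $\frac{\partial}{\partial k}U(k,\omega) = \E(iXe^{ikX}|\mathcal{F})(\omega)$ a.s., for every $k\in\R$;
(b) $\frac{\partial C}{\partial k_1}(k_1,k_2) = \Cov\!\big(\E(iXe^{ik_1X}|\mathcal{F}),\,U(k_2)\big)$, and the symmetric formula for $\partial_{k_2}$;
(c) these partials are bounded by $2\E|X|$ and continuous on $\R^2$.

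For step (a), I would use the elementary bound $|e^{ihx}-1|\leq |h|\cdot|x|$, which gives $|(e^{i(k+h)X}-e^{ikX})/h|\leq |X|$ pointwise. Since $\E|X|<\infty$, the dominated convergence theorem for conditional expectations yields that the difference quotient $(U(k+h)-U(k))/h$ converges a.s. to $\E(iXe^{ikX}|\mathcal{F})$, and moreover is bounded in absolute value by $\E(|X|\,|\mathcal{F})$ uniformly in $h$.

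For step (b), I would write the difference quotient of $C$ using bilinearity of covariance and apply dominated convergence to the unconditional expectations appearing in $\E[U(k_1)\overline{U(k_2)}]$ and $\E U(k_1)$: the $U(k_2)$ factor is bounded by $1$, and the difference quotient in the first factor is dominated by $\E(|X|\,|\mathcal{F})$, which has finite expectation $\E|X|$. This justifies exchanging limit and integral and yields the stated formula. Boundedness then follows from the general inequality $|\Cov(Y_1,Y_2)|\leq \E|Y_1Y_2|+\E|Y_1|\cdot\E|Y_2|$; taking $Y_1 = \E(iXe^{ik_1X}|\mathcal{F})$ and $Y_2 = U(k_2)$, and using $|Y_1|\leq \E(|X|\,|\mathcal{F})$ together with $|Y_2|\leq 1$, I obtain $|\partial_{k_1}C(k_1,k_2)|\leq 2\E|X|$, uniformly in $(k_1,k_2)$.

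For continuity in $(k_1,k_2)$, I would again invoke dominated convergence: for every fixed $\omega$, the map $(k_1,k_2)\mapsto \E(iXe^{ik_1X}|\mathcal{F})(\omega)\cdot\overline{U(k_2,\omega)}$ is continuous (pointwise continuity of the integrands in $k$ and another pass through conditional dominated convergence), and it is dominated by the integrable function $\E(|X|\,|\mathcal{F})$; the subtracted product $\E(iXe^{ik_1X})\cdot\overline{\E U(k_2)}$ is handled identically. The main (and really only) technical care is in making the two successive applications of dominated convergence---once for the inner conditional expectation and once for the outer expectation---with the same majorant $|X|$; once the hypothesis $\E|X|<\infty$ is in place this is routine, which is why the lemma holds with no higher moment assumption.
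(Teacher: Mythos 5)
Your proposal is correct and follows essentially the same route as the paper: the paper likewise differentiates under the conditional expectation via the dominated convergence theorem (for conditional expectations), obtaining $\frac{\partial C}{\partial k_1}=i\,\Cov\big(\E(Xe^{ik_1X}|\mathcal{F}),\E(e^{ik_2X}|\mathcal{F})\big)$, and then applies dominated convergence once more for boundedness and continuity. Your write-up simply makes explicit the majorant $|X|$ and the two successive passes of dominated convergence that the paper leaves implicit.
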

\begin{proof}
We recall that
\begin{align*}
& C(k_1, k_2)=\Cov \left(\E (e^{i\*k_1\*X}|\mathcal{F}), \E (e^{i\*k_2\*X}|\mathcal{F})\right) \\
& = \E \left( \E (e^{i\*k_1\*X}|\mathcal{F}) \* \E (e^{i\*k_2\*X}|\mathcal{F}) \right) - \E (e^{i\*k_1\*X}) \* \E (e^{i\*k_1\*X}).
\end{align*}
It follows from the Lebesgue dominated convergence theorem (for conditional expectations), that  
\begin{equation}
\label{ap300}
\frac{\partial C(k_1, k_2)}{\partial k_1}= i \* \Cov \left(\E (X \* e^{i\*k_1\*X}|\mathcal{F}), \E (e^{i\*k_2\*X}|\mathcal{F})\right).
\end{equation}
Applying the Lebesgue dominated convergence theorem one more time, we obtain that $\frac{\partial C(k_1, k_2)}{\partial k_1}$
is a bounded continuous function.
\end{proof}

%%%%%%%%%%%%%%%%%%%%%%%%%%%%%%%%%%%%%%%%%%%%%%%%%%%%%%%%%%%%%%%%%%%%%%%%%%%%%%%%%%%%%%%%%%%%%%%%%%%%%%%%%%%%%%%%%%%%%%%%%%%%%%%%%%%%%%%%%%%%%%%%%%%%%%%%%%%
%%%%%%%%%%%%%%%%%%%%%%%%%%%%%%%%%%%%%%%%%%%%%%%%%%%%%RESOLVENT ENTRIES%%%%%%%%%%%%%%%%%%%%%%%%%%%%%%%%%%%%%%%%%%%%%%%%%%%%%%%%%%%%%%%%%%%%%%%%%%%%%%%%%%%%%
%%%%%%%%%%%%%%%%%%%%%%%%%%%%%%%%%%%%%%%%%%%%%%%%%%%%%%%%%%%%%%%%%%%%%%%%%%%%%%%%%%%%%%%%%%%%%%%%%%%%%%%%%%%%%%%%%%%%%%%%%%%%%%%%%%%%%%%%%%%%%%%%%%%%%%%%%%%

\section{ \bf{Mathematical Expectation and Variance of Resolvent Entries}}
\label{sec:estvar}
This section is devoted to the estimates of the mathematical expectation and the variance of the resolvent entries.  
For $z\in \C \setminus \R,$ we denote the resolvent of $X_N$ by
\begin{equation}
\label{resolventa}
R_N(z):=(z\*I_N-X_N)^{-1}.
\end{equation}
If it does not lead to ambiguity, we will use the shorthand notation $R_{ij}(z)$ for $(R_N(z))_{ij}, \ 1\leq i,j\leq N.$
We start with the following proposition.

\begin{proposition}
\label{proposition:prop2}

Let $X_N=\frac{1}{\sqrt{N}} W_N$ be a random real symmetric (Hermitian) Wigner matrix
(\ref{offdiagreal}), (\ref{diagreal}) (respectively (\ref{offdiagherm1}-\ref{diagherm})).
Then
\begin{align}
\label{odinnadtsat100}
& \E R_{ii}(z) =g_{\sigma}(z) + O \left( \frac{1}{|\Im z|^6 \*N}\right),  \\
& \text{uniformly on bounded subsets of} \  \C\setminus \R,  \nonumber \\
\label{odinnadtsat101}
& \E R_{ij}(z)=O \left( \frac{P_5(|\Im z|^{-1})}{N}\right), \  1\leq i\not=j \leq N,\  \text{uniformly on} \ \C\setminus \R, \\
\label{odinnadtsat102}
& \V R_{ij}(z) = O \left( \frac{P_6(|\Im z|^{-1})}{N}\right), \  1\leq i, j\leq N, \ \text{uniformly on} \ \C\setminus \R. 
\end{align}
where we denote by $P_l(x), \ l\geq 1, $ a polynomial of degree $l$ with fixed positive coefficients.

If, in addition,  
\begin{equation*}
\sup_{i\not=j, N} \E |(W_N)_{ij}|^5 <\infty, \ \ \sup_{i, N} \E |(W_N)_{ii}|^3 <\infty,
\end{equation*}
then
\begin{equation}
\label{odinnadtsat103}
\E R_{ij}(z)=O \left( \frac{P_9(|\Im z|^{-1})}{N^{3/2}}\right), \  1\leq i\not=j \leq N, \ \text{uniformly on} \ \C\setminus \R.
\end{equation}
\end{proposition}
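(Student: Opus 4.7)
The plan is to combine the truncation of Lemma~\ref{lemma:1} with the Schur complement / resolvent identity approach, tracking polynomial dependence on $|\Im z|^{-1}$ throughout. By Lemma~\ref{lemma:1} one may replace $W_N$ by a truncated Wigner matrix with entries bounded by $\epsilon_N\sqrt N$ at the cost of an event of vanishing probability; since $|R_{ij}(z)|\le|\Im z|^{-1}$ deterministically, this modification only perturbs the expectations and variances in question by $o(P_k(|\Im z|^{-1}))$, and the truncated matrix retains the same mean, variance, and a uniform fourth-moment bound. The fundamental tool is then the Schur complement: for each $i$,
\[
R_{ii}(z)=\frac{1}{z-X_{ii}-Y_i},\qquad Y_i:=\sum_{j,k\neq i}X_{ij}X_{ik}R^{(i)}_{jk}(z),
\]
and for $j\neq i$,
\[
R_{ij}(z)=-R_{ii}(z)\sum_{k\neq i}X_{ik}R^{(i)}_{kj}(z),
\]
where $R^{(i)}=(zI-X^{(i)})^{-1}$ is the resolvent of the principal minor obtained by deleting row/column $i$, and is therefore independent of the $i$-th row of $W_N$.

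For the diagonal estimate \eqref{odinnadtsat100}, I would condition on $X^{(i)}$: the off-diagonal independence and the second-moment assumptions give $\E[Y_i\mid X^{(i)}]=\tfrac{\sigma^2}{N}\sum_{j\neq i}R^{(i)}_{jj}(z)+O(1/N)$, while a direct second-moment computation of the quadratic form $Y_i$ using $m_4<\infty$ and $\|R^{(i)}\|\le|\Im z|^{-1}$ yields $\V[Y_i\mid X^{(i)}]=O(P_k(|\Im z|^{-1})/N)$. Combined with the quantitative semicircle estimate $\tfrac{1}{N}\Tr R^{(i)}(z)=g_\sigma(z)+O(P_k(|\Im z|^{-1})/N)$ in mean (which follows from a fixed-point analysis of \eqref{semicircle} together with Cauchy eigenvalue interlacing between $X_N$ and $X^{(i)}$), a second-order Taylor expansion of $w\mapsto(z-w)^{-1}$ about $w=\sigma^2 g_\sigma(z)$ yields \eqref{odinnadtsat100}. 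For the variance bound \eqref{odinnadtsat102}, I would use the Doob martingale decomposition $R_{ij}-\E R_{ij}=\sum_{k=1}^N(\E_k-\E_{k-1})R_{ij}$ with respect to the filtration generated by successive rows of $W_N$; by Sherman--Morrison each difference equals a rank-two resolvent correction that can be bounded in $L^2$ by $O(P_3(|\Im z|^{-1})/N)$, so orthogonality of martingale differences produces the $1/N$ bound with the stated polynomial factor.

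For \eqref{odinnadtsat101}, I would substitute the off-diagonal Schur formula into $\E R_{ij}$ and, after conditioning on $X^{(i)}$ and on row $i$ with $X_{ik}$ removed, expand $R_{ii}$ in powers of $X_{ik}$; the first-order term vanishes because $\E X_{ik}=0$, and the surviving second-order coupling between $R_{ii}$ and $\sum_k X_{ik}R^{(i)}_{kj}$ produces a contribution of order $O(P_5(|\Im z|^{-1})/N)$. For the improved bound \eqref{odinnadtsat103} under the extra fifth/third moment hypotheses, one carries the same expansion one order further: the previously leading $O(1/N)$ contributions cancel (as in the i.i.d.\ PRS analysis) and the residual is governed by third-order on-site moments $\E W_{ij}^3$, which are of order $N^{-3/2}$ in the rescaled variables, while the Taylor remainder of $R_{ii}$ is controlled using the additional $\E|W_{ij}|^5$ and $\E|W_{ii}|^3$ bounds. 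The main obstacle will be the meticulous bookkeeping of $|\Im z|^{-1}$ powers through each Schur/resolvent iteration and each martingale difference, together with reproducing in the non-identically-distributed setting the delicate cancellations of the PRS proof that relied on exact (rather than merely uniform) moment information about the entries.
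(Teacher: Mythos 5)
Your route is genuinely different from the one the paper relies on: the paper defers to Proposition 3.1 of \cite{PRS}, whose argument (like the proof of Proposition \ref{proposition:prop4} here, which follows the same template) runs through the resolvent identity $z\*\E R_{ij}=\delta_{ij}+\sum_k\E(X_{ik}R_{kj})$ combined with the decoupling formula \eqref{decouple} and a self-consistent Master equation, whereas you use the Schur complement together with a Doob martingale decomposition in the style of Bai--Silverstein. Both routes can deliver these bounds. Two places in your outline need more than you say: the $L^2$ bound $O(P_3(|\Im z|^{-1})/N)$ per martingale increment requires representing \emph{both} off-diagonal factors in $R_{ik}R_{kj}/R_{kk}=R_{kk}\bigl(\sum_l X_{kl}R^{(k)}_{li}\bigr)\bigl(\sum_{l'}X_{kl'}R^{(k)}_{l'j}\bigr)$ and using fourth moments of the linear forms in row $k$ --- the trivial bound $|R_{ab}|\le|\Im z|^{-1}$ on either factor only yields $O(N^{-1/2})$ per increment and hence $\V R_{ij}=O(1)$; and \eqref{odinnadtsat103} requires feeding \eqref{odinnadtsat101}--\eqref{odinnadtsat102} back into the third-moment term $N^{-3/2}\sum_k\E(W_{ik}^3)\*\E[R^{(i)}_{kk}R^{(i)}_{kj}]$, whose crude bound is only $O(1/N)$.

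The genuine gap is the opening truncation step. First, Lemma \ref{lemma:1} is not available here: Proposition \ref{proposition:prop2} assumes only \eqref{offdiagreal} and \eqref{diagreal}, not the Lindeberg conditions \eqref{lind1} and \eqref{diagreal1} that Lemma \ref{lemma:1} requires. Second, and more seriously, even granting the lemma, \eqref{wtildaw2} gives $\P(W_N\not=\tilde W_N)\to 0$ with no rate, so transferring estimates back to $W_N$ via $|R_{ij}(z)|\le|\Im z|^{-1}$ perturbs $\E R_{ij}$ and $\V R_{ij}$ only by $o(|\Im z|^{-1})$ and $o(|\Im z|^{-2})$ --- errors that cannot be absorbed into the stated $O(P_l(|\Im z|^{-1})/N)$ bounds. (This is precisely why Proposition \ref{proposition:prop4} states its conclusion for the truncated matrix $T_N$ itself rather than for $X_N$.) The fix is to drop the truncation entirely: every moment computation in your outline --- the conditional mean and variance of the quadratic form $Y_i$, the fourth moments of the linear forms $\sum_l X_{kl}R^{(k)}_{li}$, and the exact expansion of $(z-X_{ii}-Y_i)^{-1}$ with remainder controlled by $|R_{ii}|\le|\Im z|^{-1}$ --- uses at most fourth moments off the diagonal and second moments on the diagonal, which are exactly the hypotheses of the proposition; only \eqref{odinnadtsat103} needs more, and there the assumed fifth and third moments cover the extra order of expansion.
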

This proposition is the extension of Proposition 3.1 in \cite{PRS} to the non-i.i.d. case. Since the proofs of 
(\ref{odinnadtsat100}-\ref{odinnadtsat103}) are very similar to the proofs given in Proposition 3.1 in Section 2 of \cite{PRS}, we leave the details to 
the reader.

The next proposition is instrumental in extending Theorem \ref{thm:main} to the test functions from $\mathcal{H}_s$ for $s>3.$
Our goal is to obtain an upper bound on $\V [(R_N)_{ij}(z)]$ which is integrable with respect to $x=\Re z$ over the real line for $\Im z \not=0.$
\begin{proposition}
\label{proposition:prop4}

Let $X_N=\frac{1}{\sqrt{N}} W_N$ be a random real symmetric Wigner matrix
(\ref{offdiagreal}), (\ref{diagreal}) such that
the condition (\ref{LIND1}) is satisfied for some fixed $m\geq 1.$
Then there exists a random real symmetric Wigner matrix $T_N$ and a non-random positive sequence $\epsilon_N\to 0$ as $N \to \infty$ such that
the properties (\ref{mir0}-\ref{mir5}) from Lemma \ref{lemma:2}  are satisfied and, in addition, 

\begin{equation}
\label{odinnadtsat300}
\V [(G_N)_{ij}(z)] = O \left( \frac{ (\E \|G_N(z)\|^2) \* P_4(|\Im z|^{-1})}{N}\right) +O \left( \frac{ (\E \|G_N(z)\|^{3/2}) 
\* P_4(|\Im z|^{-1})}{N}\right),  
\end{equation}
$ 1\leq i \leq m,\ 1\leq j\leq N,$ uniformly on $ \C\setminus \R,$ 
where 
$ G_N(z):=\left(z\*I_N-\frac{1}{\sqrt{N}}\* T_N\right)^{-1}$
is the resolvent of $\frac{1}{\sqrt{N}}\* T_N.$ 
\end{proposition}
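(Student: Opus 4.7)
The plan is to combine Lemma~\ref{lemma:2} with a classical martingale-difference decomposition of the resolvent entry $(G_N)_{ij}$ over the rows of $T_N$, and then bound each increment using the rank-two resolvent perturbation identity together with the Ward identity $\sum_k |(G_N)_{ik}|^2 = \Im (G_N)_{ii}/\Im z$. First, apply Lemma~\ref{lemma:2} to obtain $T_N$ satisfying (\ref{mir0})--(\ref{mir5}); what matters for the rest of the argument is the uniform truncation $|(T_N)_{ik}| \leq \epsilon_N N^{1/4}$ on the first $m$ rows together with the uniform fourth-moment bound on the remaining rows. Introduce the filtration $\mathcal{F}_k = \sigma\bigl((T_N)_{ab} : \max(a,b) \leq k\bigr)$ and set $\E_k[\,\cdot\,] = \E[\,\cdot\,|\mathcal{F}_k]$. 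Letting $G_N^{(k)}(z)$ denote the resolvent of $T_N/\sqrt{N}$ with the $k$-th row and column replaced by zeros, and noting that $(G_N^{(k)})_{ij}$ is independent of row and column $k$ and hence lies in the kernel of $\E_k - \E_{k-1}$, the contractivity of the orthogonal $L^2$-projection $\E_k - \E_{k-1}$ yields
\begin{equation*}
\V (G_N)_{ij}(z) = \sum_{k=1}^N \E\bigl|(\E_k - \E_{k-1})(G_N)_{ij}(z)\bigr|^2 \leq \sum_{k=1}^N \E\bigl|(G_N)_{ij}(z) - (G_N^{(k)})_{ij}(z)\bigr|^2 .
\end{equation*}

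Next, the rank-two perturbation identity gives, for $j \neq k$,
\begin{equation*}
(G_N)_{ij} - (G_N^{(k)})_{ij} = \frac{1}{\sqrt{N}}\,(G_N)_{ik}\,X_k, \qquad X_k := \sum_{m \neq k} (T_N)_{km}(G_N^{(k)})_{mj},
\end{equation*}
with an analogous, lower-order expression in the boundary case $j = k$ arising from $(G_N^{(k)})_{kk} = 1/z$. Conditionally on $\mathcal{F}^{(k)} := \sigma((T_N)_{ab} : a, b \neq k)$, the variable $X_k$ is a linear form in independent mean-zero entries, so
\begin{equation*}
\E\bigl[|X_k|^2 \,\bigl|\, \mathcal{F}^{(k)}\bigr] \leq \sigma^2 \sum_{m \neq k} |(G_N^{(k)})_{mj}|^2 = \sigma^2\,\frac{\Im (G_N^{(k)})_{jj}}{\Im z} \leq \frac{\sigma^2 \|G_N^{(k)}\|}{|\Im z|},
\end{equation*}
and the fourth conditional moment can be estimated by a Rosenthal-type bound that uses the uniform $m_4$ control for $k > m$ and the truncation $|(T_N)_{km}| \leq \epsilon_N N^{1/4}$ for $k \leq m$.

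The estimate then splits into two contributions according to how $|(G_N)_{ik}|^2 |X_k|^2$ is handled. Bounding $|X_k|^2$ by its conditional expectation and using the Ward identity $\sum_k |(G_N)_{ik}|^2 = \Im (G_N)_{ii}/\Im z \leq \|G_N\|/|\Im z|$ collapses the diagonal sum to an expression controlled by $\E\|G_N\|^2 \cdot P_2(|\Im z|^{-1})$, giving the first term in (\ref{odinnadtsat300}). For the remaining fluctuation contribution, Cauchy--Schwarz combined with the Ward identity in the form
\begin{equation*}
\sum_k |(G_N)_{ik}|^4 \leq \bigl(\max_k |(G_N)_{ik}|^2\bigr)\sum_k |(G_N)_{ik}|^2 \leq \|G_N\|^2 \cdot \frac{\|G_N\|}{|\Im z|} = \frac{\|G_N\|^3}{|\Im z|}
\end{equation*}
converts $\sum_k |(G_N)_{ik}|^2(|X_k|^2 - \E_k|X_k|^2)$ into a product involving $(\sum_k |(G_N)_{ik}|^4)^{1/2} \leq \|G_N\|^{3/2}/|\Im z|^{1/2}$; combined with the fourth conditional moment of $X_k$, which is kept of order $N^{-1}$ by the truncation on rows $1,\dots,m$, this produces the second term in (\ref{odinnadtsat300}) with the fractional power $\E\|G_N\|^{3/2}$.

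The main obstacle is the careful bookkeeping of powers of $\|G_N\|$ and $|\Im z|^{-1}$, especially because $(G_N)_{ik}$ and $X_k$ both depend on row $k$ and cannot be factorized under the conditional expectation. To handle this entanglement I would additionally use the secondary rank-two expansion $(G_N)_{ik} = (G_N)_{kk} N^{-1/2} \sum_{l \neq k} (G_N^{(k)})_{il}(T_N)_{lk}$, turning $(G_N)_{ik} X_k$ into a quadratic form in the independent entries of row $k$ times $(G_N)_{kk}$, which is uniformly bounded by $1/|\Im z|$. The appearance of the fractional exponent $3/2$ in the bound is precisely the signature of the interplay between the Cauchy--Schwarz step that separates the $|(G_N)_{ik}|^2$ factor from the martingale increment and the Ward-identity summation that replaces $N$ copies of $\|G_N\|^2$ by a single $\|G_N\|/|\Im z|$. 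Finally, the boundary case $j = k$ and the diagonal entries $(T_N)_{kk}$ can be treated separately by essentially the same method, and their contributions are of the same order or lower.
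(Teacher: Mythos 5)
Your overall strategy (martingale differences over the rows combined with the row-removal resolvent identity and the Ward identity) is genuinely different from the paper's proof, which obtains (\ref{odinnadtsat300}) from the decoupling/cumulant-expansion Master equation for $\E R_{ij}$ together with Shcherbina's bound $\Cov(R_{ij}(z),\tr_N R_N(z))=O(P_2(|\Im z|^{-1})\E\|R_N\|^{3/2}/N)$ — that covariance estimate is where the exponent $3/2$ actually comes from, and the truncation at level $\epsilon_N N^{1/4}$ is used there only to control the remainder of the cumulant expansion in the regime $|\Im z|>L^{1/4}N^{-1/4}$. Your route could in principle work, but as written it has a quantitative gap that loses a factor of $\sqrt N$ precisely at the step you identify as producing the second term of (\ref{odinnadtsat300}).

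The problem is the claim that the conditional fourth moment of $X_k$ (equivalently, the conditional variance of $|X_k|^2$) ``is kept of order $N^{-1}$ by the truncation.'' It is not: writing $g_m=(G_N^{(k)})_{mj}$, one has
\begin{equation*}
\V\bigl(|X_k|^2\,\big|\,\mathcal F^{(k)}\bigr)\;\asymp\;\sum_{m}\V\bigl((T_N)_{km}^2\bigr)\,|g_m|^4+\sigma^4\sum_{m\neq m'}|g_m|^2|g_{m'}|^2\;\le\;m_4\,\|G_N^{(k)}\|^3\,|\Im z|^{-1}+\sigma^4\,\|G_N^{(k)}\|^2\,|\Im z|^{-2},
\end{equation*}
which is $O(1)$ in $N$; the truncation controls moments of order higher than four, but the fourth moment is already of order $m_4$ and cannot be made small. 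Consequently your Cauchy--Schwarz step gives
$\frac1N\E\bigl[(\sum_k|G_{ik}|^4)^{1/2}(\sum_k(|X_k|^2-\E^{(k)}|X_k|^2)^2)^{1/2}\bigr]\lesssim \frac1N\,\bigl(\E\|G_N\|^3|\Im z|^{-1}\bigr)^{1/2}\bigl(N\,P(|\Im z|^{-1})\bigr)^{1/2}=O(N^{-1/2})$, not $O(N^{-1})$. To recover the $1/N$ rate you must use the secondary expansion you mention only in passing --- $G_{ik}=G_{kk}N^{-1/2}\sum_{l\neq k}(G^{(k)})_{il}(T_N)_{lk}$ --- \emph{systematically}, so that each increment becomes $G_{ij}-(G^{(k)})_{ij}=N^{-1}G_{kk}Y_kZ_k$ with $Y_k,Z_k$ linear forms in the independent entries of row $k$ whose conditional fourth moments are $O(P(\|G_N^{(k)}\|,|\Im z|^{-1}))$; then $\E|G_{ij}-(G^{(k)})_{ij}|^2=O(N^{-2})$ termwise and the sum over $k$ is $O(N^{-1})$. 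Even then one must track which norm factors ($G_{kk}$ and $\|G_N\|$ versus $\|G_N^{(k)}\|$) still depend on row $k$, keep factors of $\|G_N^{(k)}\|$ rather than replacing them by $|\Im z|^{-1}$ (otherwise the bound is useless for the $x$-integration in Proposition \ref{proposition:prop11}), and treat the boundary terms $k\in\{i,j\}$ and the diagonal entries separately. None of this is carried out, so the mechanism you give does not yet produce the stated $\E\|G_N\|^{3/2}$ term.
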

An equivalent results holds in the Hermitian case.
\begin{proof}
The existence of a Wigner random matrix $T_N$ that satisfies (\ref{mir0}-\ref{mir5}) follows from Lemma \ref{lemma:2}.  All is left to us is to show that
(\ref{odinnadtsat300}) holds.  Since $\P(X_N=T_N) \to 1$ as $N\to \infty, $ we can assume, without loss of generality, that $T_N=X_N.$ 

Let $L$ be a positive constant that will be later chosen to be sufficiently large depending on $\sigma, \sigma_1, $ and $m_4.$
We note that if
\begin{equation}
\label{40}
\frac{1}{|\Im z|^4\*N} \geq \frac{1}{L}
\end{equation}
then 
\begin{equation}
\label{41}
\V [(R_N)_{ij}(z)] \leq \E \|R_N(z)\|^2 \leq \frac{L \* \E \|R_N(z)\|^2}{|\Im z|^4\*N}.
\end{equation}
Thus, (\ref{40}) implies (\ref{odinnadtsat300}).

Now, let us assume that
\begin{equation}
\label{42}
\frac{1}{|\Im z|^4\*N} < \frac{1}{L}.
\end{equation}
One can rewrite (\ref{42}) as
\begin{equation}
\label{43}
|\Im z| > \frac{L^{1/4}}{N^{1/4}}.
\end{equation}

Let us fix $1\leq i,j \leq m.$ 
%Using the resolvent identity
%\begin{equation}
%\label{resident}
%(zI - A_2)^{-1} = (zI - A_1)^{-1} - (zI - A_1)^{-1}(A_1 - A_2) (zI - A_2)^{-1}, 
%\end{equation}
%with $A_2=X_N,$ and $A_1=0,$ we obtain
Then
\begin{equation}
\label{odin}
z\* \E R_{ij}(z)  = \delta_{ij}+\sum_{k=1}^N \*\E(X_{ik}\*R_{kj}(z)).
\end{equation}
To estimate $\E(X_{ik}\*R_{kj}(z)),$ we use the decoupling formula (see e.g. (i) in Section 2 in \cite{KKP} and Proposition 3.1 in \cite{LytPastur}).
Let $\xi$ be a real random variable with $p+2$ finite moments, and $\phi$ a real-valued function with $p+1$ continuous and bounded derivatives.
Then
\begin{equation}
\label{decouple} 
\E(\xi \phi(\xi)) = \sum_{a=0}^p \frac{\kappa_{a+1}}{a!} \E(\phi^{(a)}(\xi)) + \epsilon_{p+1},  
\end{equation}
where $\kappa_a$ are the cumulants of $\xi$, 
\begin{equation}
\label{bilet}
|\epsilon_{p+1}| \leq C \sup_t \big| \phi^{(p+1)}(t) \big| \E(|\xi|^{p+2}),
\end{equation}
and $C$ depends only on $p$. 
Moreover, as follows from the proof of Proposition 3.1 in \cite{LytPastur}, if $supp(\xi)\subset [-K,K]$ then the supremum on the r.h.s. of 
(\ref{bilet}) can be taken over $t \in  [-K,K].$

The derivative of $R_{kl}$ with respect to $X_{pq}$, for $p \not = q$ is given by 
\begin{equation}
\label{vecher1}
\frac{\partial R_{kl}}{\partial X_{pq}} = R_{kp}\*R_{ql} +R_{kq} \*R_{pl}.
\end{equation}
For $p =q$ the derivative is given by
\begin{equation}
\label{vecher2}
\frac{\partial R_{kl}}{\partial X_{pp}} = R_{kp}\*R_{pl} .
\end{equation}

Applying (\ref{decouple}-\ref{vecher2}) to the term $\E(X_{ik}\*R_{kj})$ in (\ref{odin}), we obtain the following Master equation
\begin{align}
\label{dva1}
& z \* \E R_{ij}(z) = \delta_{ij} +\sigma^2 \* \E [ R_{ij}(z)\*\tr_N R_N(z)]+
\frac{\sigma^2}{N}\E [(R_N(z)^2)_{ij}]  \\
\label{dva2}
& -\frac{2\*\sigma^2}{N}\*\E[R_{ii}(z)\*R_{ij}(z)] + r_N \\
\label{dva3}
& =\delta_{ij} +\sigma^2 \* \E [R_{ij}(z)\*\tr_N R_N(z)] + r_N + O\left(\frac{\E \|R_N(z)\|^2}{N}\right),
\end{align}
where $r_N$ contains the third cumulant term corresponding to $p=2$ in (\ref{decouple}),
and the error due to the truncation of the decoupling formula (\ref{decouple}) at $p=2.$  For $k=i,$
we truncate the decoupling formula (\ref{decouple}) at $p=0.$

We will need the following lemma.

\begin{lemma}
\label{Lemma10}
The following two bounds hold.
\begin{equation}
\label{202}
\Cov(R_{ij}(z), \tr_N R_N(z))=O\left(\frac{P_2(|\Im z|^{-1})\* \E \|R_N(z)\|^{3/2}}{N}\right),
\end{equation}
uniformly in $z\in\C\setminus \R.$
\begin{equation}
\label{203}
r_N=O\left(\frac{P_2(|\Im z|^{-1})\*\E \|R_N(z)\|^2}{N}\right),
\end{equation}
uniformly in $z$ satisfying (\ref{43}), where $L$ is an arbitrary fixed positive number.
\end{lemma}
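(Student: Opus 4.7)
The plan is to estimate both quantities by direct inspection of the cumulant expansion (\ref{decouple})--(\ref{bilet}), using the a priori resolvent bound $\|R_N(z)\|\leq |\Im z|^{-1}$ to convert sums of products of resolvent entries into the required form.

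For bound (\ref{203}), $r_N$ is the sum of the third-cumulant term $\sum_k \tfrac{\kappa_3(W_{ik})}{2 N^{3/2}}\,\E[\partial^2_{X_{ik}}R_{kj}]$ and the truncation error $\varepsilon_3$ coming from terminating (\ref{decouple}) at $p=2$. Computing $\partial^2_{X_{ik}}R_{kj}$ via (\ref{vecher1})--(\ref{vecher2}) produces a combination of products of three resolvent entries such as $R_{ki}^2 R_{kj}$, $R_{kk}R_{ii}R_{kj}$, and $R_{kk}R_{ki}R_{ij}$. Each resulting sum over $k$ is bounded by $C\sqrt{N}\,\|R_N\|^3$ via Cauchy--Schwarz in $\ell^2(k)$; for instance $|\sum_k R_{kk}R_{ki}|\leq \|\mathrm{diag}(R_N)\|_{\ell^2}\cdot\|(R_N)_{\cdot i}\|_{\ell^2}\leq \sqrt{N}\,\|R_N\|^2$. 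Combined with the $1/N^{3/2}$ prefactor this yields $O(\|R_N\|^3/N)$. For $\varepsilon_3$, the bound (\ref{bilet}) together with $\E|X_{ik}|^4\leq m_4/N^2$ and the pointwise estimate $|\partial^3_{X_{ik}}R_{kj}(t)|\leq C\|R_N\|^4$ on the truncation range $[-\epsilon_N N^{-1/4},\,\epsilon_N N^{-1/4}]$ (where condition (\ref{43}) together with $\epsilon_N\to 0$ keep the perturbed resolvent norm comparable to the unperturbed one) yields $O(\|R_N\|^4/N)$ after summing over $k$. Since $\|R_N\|^3/N\leq |\Im z|^{-1}\,\|R_N\|^2/N$ and $\|R_N\|^4/N\leq |\Im z|^{-2}\,\|R_N\|^2/N$, both contributions fit inside $P_2(|\Im z|^{-1})\,\E\|R_N\|^2/N$.

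For bound (\ref{202}), the plan is a Cauchy--Schwarz estimate combined with concentration of the normalized trace. Decomposing $\tr_N R_N -\E \tr_N R_N$ into martingale differences with respect to the filtration generated by successive rows of $X_N$, the rank-two perturbation/interlacing bound for $\Tr R_N$ yields $|(E_k-E_{k-1})\tr_N R_N|\leq C/(N|\Im z|)$ pointwise, hence $\V(\tr_N R_N)\leq C/(N|\Im z|^2)$. Cauchy--Schwarz then gives $|\Cov(R_{ij},\tr_N R_N)|\leq \sqrt{\V(R_{ij})\,\V(\tr_N R_N)}\leq \sqrt{\E\|R_N\|^2}\cdot C/(|\Im z|\sqrt{N})$, and the elementary interpolation $\E\|R_N\|^2\leq |\Im z|^{-1/2}\,\E\|R_N\|^{3/2}$ (a direct consequence of $\|R_N\|\leq|\Im z|^{-1}$) converts this into a bound with $\sqrt{\E\|R_N\|^{3/2}}$ on the right, which is then absorbed into $\E\|R_N\|^{3/2}$ using the lower bound $\E\|R_N\|^{3/2}\geq |\Im z|^{-3/2}$. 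All leftover factors of $|\Im z|^{-1}$ are collected into $P_2(|\Im z|^{-1})$.

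The main obstacle is to track the polynomial degree in $|\Im z|^{-1}$: each Cauchy--Schwarz/interpolation step trades a power of $\|R_N\|$ for a power of $|\Im z|^{-1}$, and one must check that the final polynomial does not exceed degree $2$, which requires splitting cases depending on the size of $|\Im z|$ (especially in the near-real regime where $\V(R_{ij})$ is only controlled via the trivial bound). A secondary technical issue is the supremum appearing in the truncation error (\ref{bilet}): without the Lemma \ref{lemma:2} truncation $|W_{ik}|\leq\epsilon_N N^{1/4}$ together with (\ref{43}), the perturbation of $X_{ik}$ could in principle move a single eigenvalue of $X_N$ close to $z$ and make $\|R_N\|$ blow up beyond $|\Im z|^{-1}$; the restriction of the sup to a window of size $\epsilon_N N^{-1/4}$ on $X_{ik}$ is precisely what keeps $\|R_N\|$ under control across the range of the supremum.
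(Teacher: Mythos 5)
Your treatment of (\ref{203}) is essentially the paper's own argument: you compute the second derivative of $R_{kj}$ in $X_{ik}$ via (\ref{vecher1})--(\ref{vecher2}), bound the third-cumulant term using $\sum_k|R_{ik}|^2\le\|R_N(z)\|^2$ and $|R_{pq}|\le\|R_N(z)\|\le|\Im z|^{-1}$, and control the truncation error through $\E|X_{ik}|^4\le m_4 N^{-2}$ together with the comparison $\|R_N^{(l)}(z)\|\le\|R_N(z)\|(1+o(1))$ for the rank-two perturbed resolvents, which is exactly where (\ref{mir5}) and (\ref{43}) enter. (You omit the $k=i$ term, where the expansion is cut at $p=0$, but that is a one-line addition.) This part is fine.

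The proof of (\ref{202}), however, has a genuine gap. The paper obtains (\ref{202}) by invoking the first bound of Proposition 2 of \cite{Sh}, which controls the variance of the \emph{unnormalized} trace $\Tr R_N(z)$ by a quantity that is $O(1)$ in $N$; this is the nontrivial input, essentially the variance part of the CLT for linear eigenvalue statistics. Your martingale/interlacing argument only yields $|(\E_k-\E_{k-1})\tr_N R_N(z)|\le C/(N|\Im z|)$, hence $\V(\tr_N R_N(z))\le C/(N|\Im z|^2)$, i.e.\ $\V(\Tr R_N(z))=O(N)$. Combined with $\V(R_{ij}(z))\le\E\|R_N(z)\|^2$ via Cauchy--Schwarz, this gives a covariance bound of order $N^{-1/2}$, short of the required $N^{-1}$ by a factor of $\sqrt{N}$; the loss is material, since in the Master equation (\ref{dva1})--(\ref{dva3}) the covariance term must be $O(1/N)$ for the bootstrap in Proposition \ref{proposition:prop4} to close. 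In addition, your final absorption step rests on the inequality $\E\|R_N(z)\|^{3/2}\ge|\Im z|^{-3/2}$, which is false: by (\ref{resbound}) one has $\|R_N(z)\|=1/\mathrm{dist}(z,Sp(X_N))\le|\Im z|^{-1}$, and $\|R_N(z)\|$ can be arbitrarily small when $\Re z$ is far from the spectrum. Since (\ref{202}) is asserted uniformly on $\C\setminus\R$, and the factor $\E\|R_N(z)\|^{3/2}$ is precisely what later makes the variance bound integrable in $x=\Re z$ in the proof of Proposition \ref{proposition:prop11}, this step cannot be rescued by restricting attention to $z$ near the spectrum.
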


\begin{proof}
The bound (\ref{202}) follows from the first of the two bounds on the variance of the trace of the resolvent in 
Proposition 2 of \cite{Sh}.  It should be mentioned that the bound is valid provided the second
moments of the diagonal entries are uniformly bounded and the fourth moments
of the off-diagonal entries are also uniformly bounded (\cite{Sh1}).

To prove the bound (\ref{203}), one has to study the third cumulant term that corresponds to  $p=2$ in the decoupling formula (\ref{decouple}) for 
$k\not=i$ and the error terms due to the truncation of (\ref{decouple}) at $p=2$ for $k\not=i$ and at $p=0$ for $k=i.$

The third cumulant term gives
\begin{align}
& \frac{1}{2!\*N^{3/2}} [ 4\* \E (\sum_{k:k\not=i} \kappa_3((W_N)_{ik})\* R_{ij}\*R_{ik}\*R_{kk}) + 
2\*\E (\sum_{k:k\not=i} \kappa_3((W_N)_{ik})\* R_{ii}\*R_{kk}\*R_{kj}) \nonumber \\
&+ 2\*\E (\sum_{k:k\not=i} \kappa_3((W_N)_{ik})\* (R_{ik})^2 \* R_{jk}) ],\nonumber
\end{align}
where $\kappa_3((W_N)_{ik})$ denotes the third cumulant of $(W_N)_{ik}.$
Since $|\kappa_3((W_N)_{ik})|\leq const(m_4), $
\begin{equation}
\label{solnce}
\sum_k |R_{ik}|^2 \leq \|R_N(z) \|^2, \ \text{and} \ |R_{pq}|(z) \leq \|R_N(z)\| \leq \frac{1}{|\Im z|},
\end{equation}
one observes that the third cumulant term can be bounded in absolute value by
\begin{equation*}
O\left(\frac{\E \|R_N(z)\|^2}{|\Im z| \*N}\right).
\end{equation*}

To estimate the error term due to the truncation of (\ref{decouple}) at $p=2$ for $k\not=i,$ we have to consider finitely many sums of the following form
\begin{equation}
\label{trun}
N^{-2} \E  \left (  \sum_{k:k\not=i} \sup |R^{(1)}_{ab}|\*|R^{(2)}_{cd}|\*|R^{(3)}_{ef}|\*|R^{(4)}_{pq}| \right),
\end{equation}
where $a,b,c,d,e,f,p,q,s \in \{i,k,j\}, \ $ the supremum in (\ref{trun}) is considered over all possible resolvents 
$R^{(l)}= (z-X_N^{(l)})^{-1}, \ l=1,\ldots 4$ of 
rank two perturbations $X_N^{(l)}=X_N +x\*E_{ik} $ of $X_N$ with  $(E_{ik})_{jh}=\delta_{ij}\*\delta_{kh} +
\delta_{ih}\*\delta_{kj}. $
Since 
\begin{equation*}
|X_{ik}|\leq \epsilon_N\*N^{-1/4}, \ k\not=i, \ \ \epsilon_N\to 0 \ \text{as} \ N\to \infty,
\end{equation*}
by (\ref{mir5}), we can restrict $x$ in the supremum in (\ref{trun}) to $|x|\leq \epsilon_N \*N^{-1/4}.$
Then
\begin{equation*}
R^{(l)}_N(z)= (z\*I_N-X_N^{(l)})^{-1}= (z\*I_N-X_N +x\*E_{ik})^{-1}=(I_N + R_N(z)\*x\*E_{ik})^{-1}\*R_N(z).
\end{equation*}
Since by taking into account (\ref{43})
\begin{equation*}
\|R_N(z)\*x\*E_{ik}\|\leq \frac{1}{|\Im z|}\* \epsilon_N \*N^{-1/4}\leq \frac{N^{1/4}}{L^{1/4}}\* \epsilon_N \*N^{-1/4}=\epsilon_N\*L^{-1/4}=o(1),
\end{equation*}
we have 
\begin{equation*}
\|R^{(l)}_N(z)\|\leq \|R_N(z)\|\*(1+o(1)),
\end{equation*}
and we obtain that the expression in (\ref{trun}) can be bounded from above by $O\left(\frac{\E \|R_N(z)\|^4}{N}\right).$  It follows from 
\begin{equation}
\label{resbound}
\|R_N(z)\|=\frac{1}{dist(z, Sp(X_N))}\leq |\Im (z)|^{-1}.
\end{equation}
that one can write the upper bound as $O\left(\frac{\E \|R_N(z)\|^2}{|\Im z|^2 \*N}\right).$

To estimate the error term due to the truncation of (\ref{decouple}) at $p=0$ for $k=i,$ one proceeds in a similar manner.
Lemma \ref{Lemma10} is proven
\end{proof}
The rest of the proof of Proposition \ref{proposition:prop4} is similar to the proof of (3.3) in \cite{PRS}.  The details are left to the reader.
\end{proof}
%%%%%%%%%%%%%%%%%%%%%%%%%%%%%%%%%%%%%%%%%%%%%%%%%%%%%%%%%%%%%%%%%%%%%%%%%%%%%%%%%%%%%%%%%%%%%%%%%%%%%%%%%%%%%%%%%%%%%%%%%%%%%%%%%%%%%%%%%%%%%%%%%%%%%%%%%%
%%%%%%%%%%%%%%%%%%%%%%%%%%%%%%%%%%%%%%%%%%%%%%%%%%%%%PROOFS%%%%%%%%%%%%%%%%%%%%%%%%%%%%%%%%%%%%%%%%%%%%%%%%%%%%%%%%%%%%%%%%%%%%%%%%%%%%%%%%%%%%%%%%%%%%%%%
%%%%%%%%%%%%%%%%%%%%%%%%%%%%%%%%%%%%%%%%%%%%%%%%%%%%%%%%%%%%%%%%%%%%%%%%%%%%%%%%%%%%%%%%%%%%%%%%%%%%%%%%%%%%%%%%%%%%%%%%%%%%%%%%%%%%%%%%%%%%%%%%%%%%%%%%%%

\section{ \bf{Proof of Theorem \ref{thm:main}}}
\label{sec:proofs}
The goal of this Section is to prove Theorem \ref{thm:main}.

First, we extend the estimates of Proposition \ref{proposition:prop2} to a sufficiently wide class of test function by
using Helffer-Sj\"{o}strand functional calculus (\cite{HS}, \cite{D}) as in \cite{PRS}.
Let $f\in C^{l+1}(\R)$ decay at infinity sufficiently fast.  Then, one can write
\begin{equation}
 f(X_N)=-\frac{1}{\pi}\,\int_{\mathbb{C}}\frac{\partial \tilde{f}}{\partial \bar{z}}\, R_N(z)\,dxdy 
\quad,\quad\frac{\partial \tilde{f}}{\partial \bar{z}} := \frac{1}{2}\Big(\frac{\partial \tilde{f}}
{\partial x}+i\frac{\partial \tilde{f}}{\partial y}\Big)
\label{formula-H/S}
 \end{equation}
 where:
 \begin{itemize}
\item[i)]
 $z=x+iy$ with $x,y \in \mathbb{R}$;
 \item[ii)] $\tilde{f}(z)$ is the extension of the function $f$ defined as follows
  \begin{equation}\label{a.a. -extension}
  \tilde{f}(z):=\Big(\,\sum_{n=0}^{l}\frac{f^{(n)}(x)(iy)^n}{n!}\,\Big)\sigma(y);
\end{equation}
 here $\sigma \in C^{\infty}(\mathbb{R})$ is a nonnegative function equal to $1$ for $|y|\leq 1/2$ and equal to zero for $|y|\geq 1$.
 \end{itemize}
Using the definition of $\tilde{f}$ (see (\ref{a.a. -extension})) one can calculate 
\begin{eqnarray}
\frac{\partial \tilde{f}}{\partial \bar{z}}&=&\frac{1}{2}\Big(\frac{\partial \tilde{f}}{\partial x}+i\frac{\partial \tilde{f}}{\partial y}
\Big)\\
& =&\frac{1}{2} \Big(\,\sum_{n=0}^{l}\frac{f^{(n)}(x)(iy)^n}{n!}\,\Big)
i\frac{d\sigma}{dy}+\frac{1}{2}f^{(l+1)}(x)(iy)^l\frac{\sigma(y)}{l!}
\end{eqnarray}
and derive the crucial bound
\begin{equation}\label{estimate-derivative3}
\Big|\frac{\partial \tilde{f}}{\partial \bar{z}} (x+iy)\Big|\leq  Const \* \max\left(|\frac{d^jf}{dx^j}(x)|, \ 1\leq j \leq l+1\right) \*
|y|^l\quad.
\end{equation}

Directly following the calculations in Section 3 of \cite{PRS}, one obtains the following extention to a 
non-i.i.d. setting of Proposition 1.1 in \cite{PRS}.

\begin{proposition}
\label{proposition:prop3}
Let $X_N=\frac{1}{\sqrt{N}} W_N$ be a random real symmetric (Hermitian) Wigner matrix
(\ref{offdiagreal}), (\ref{diagreal}) (respectively (\ref{offdiagherm1}-\ref{diagherm}).
Then the following holds.

(i) Let $L$ be some positive number, $f\in C^7(\R)$ with compact support, and $supp(f) \subset [-L, +L].$
Then there exists a constant $Const(L, \sigma, \sigma_1, m_4)$ such that
\begin{align}
\label{Chto1}
& \big|\E (f(X_N)_{ii})-\int_{-2\sigma}^{2\sigma} f(x) \* \frac{1}{2 \pi \sigma^2} \sqrt{ 4 \sigma^2 - x^2}  \* dx\big|
\leq Const(L, \sigma, \sigma_1, m_4) \* \frac{\|f\|_{C^7([-L, L])}}{N}, \\
& 1\leq i \leq N. \nonumber
\end{align}

(ii) Let $f \in C^8(\R), $
then there exists a constant $Const(\sigma, \sigma_1, m_4)$ such that
\begin{align}
\label{Chto11}
& \big|\E (f(X_N)_{ii})-\int_{-2\sigma}^{2\sigma} f(x) \* \frac{1}{2 \pi \sigma^2} \sqrt{ 4 \sigma^2 - x^2}  \* dx \big|  \\
& \leq Const(\sigma, \sigma_1, m_4) \* \frac{\|f\|_{8,1,+}}{N} ,
\ 1\leq i \leq N. \nonumber
\end{align}
where $\|f\|_{n,1,+}$ is defined in (\ref{normasobolev}).

(iii) Let $f\in C^6(\R), $
then
\begin{equation}
\label{Chto12}
\big|\E (f(X_N)_{jk})\big|\leq Const(\sigma, \sigma_1, m_4) \* \frac{\|f\|_{6,1}}{N},
\ 1\leq j <k\leq N,
\end{equation}
where $\|f\|_{n,1}$ is defined in (\ref{normsob}).

(iv) Let $f \in C^4(\R),$ then
\begin{equation}
\label{Chto13}
\V (f(X_N)_{ij}) \leq Const(\sigma, \sigma_1, m_4) \* \frac{\|f\|^2_{4,1}}{N},  \ 1\leq i,j \leq N.
\end{equation}

(v) If 
\begin{equation*}
\sup_{i\not=j, N} \E |(W_N)_{ij}|^5 <\infty, \ \ \sup_{i, N} \E |(W_N)_{ii}|^3 <\infty,
\end{equation*}
and $f\in C^{10}(\R), $ then one can improve (\ref{Chto12}), namely
\begin{equation}
\label{Chto14}
|\E (f(X_N)_{jk})|\leq Const \* \frac{\|f\|_{10,1}}{N^{3/2}},
\ 1\leq j <k\leq N,
\end{equation}
where $Const$ depends on $\sup_{i\not=j, N} \E |(W_N)_{ij}|^5, $ and $ \sup_{i, N} \E |(W_N)_{ii}|^3.$
\end{proposition}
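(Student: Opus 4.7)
The plan is to prove Proposition \ref{proposition:prop3} by combining the Helffer-Sj\"ostrand formula (\ref{formula-H/S}) with the resolvent estimates from Proposition \ref{proposition:prop2}, reproducing the scheme of Section 3 of \cite{PRS} with the new non-i.i.d.\ inputs. For parts (i), (iii), (v), I would start from
\begin{equation*}
\E(f(X_N))_{ij} - \delta_{ij}\int_{-2\sigma}^{2\sigma} f(x) \* \frac{1}{2\pi\sigma^2}\sqrt{4\sigma^2-x^2}\,dx
= -\frac{1}{\pi}\int_{\C}\frac{\partial\tilde f}{\partial\bar z}\left(\E R_{ij}(z)-\delta_{ij}\*g_\sigma(z)\right) dx\,dy,
\end{equation*}
and estimate the integrand pointwise using (\ref{estimate-derivative3}). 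For part (iv), I would apply the Helffer-Sj\"ostrand identity to the centered quantity $f(X_N)_{ij}-\E f(X_N)_{ij}$ and take $L^2(\P)$-norms under the integral via Minkowski's inequality, using $\sqrt{\V R_{ij}(z)}=O(P_3(|\Im z|^{-1})N^{-1/2})$ from (\ref{odinnadtsat102}).

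For part (i), I would choose the order $l=6$ in the almost-analytic extension of $f$, so that (\ref{estimate-derivative3}) gives $|\partial\tilde f/\partial\bar z|\leq Const\,|y|^6\,\|f\|_{C^7([-L,L])}$; the factor $|y|^6$ exactly cancels the $|\Im z|^{-6}$ from (\ref{odinnadtsat100}), and the $\sigma(y)$-cutoff together with the compact support of $f$ localizes the integration to the rectangle $[-L,L]\times\{|y|\leq 1\}$, producing (\ref{Chto1}). Part (iii) is entirely parallel: the off-diagonal estimate (\ref{odinnadtsat101}) involves $P_5(|\Im z|^{-1})$, which forces $l=5$ and hence $f\in C^6$, while the weight-free $L^1$-norm $\|f\|_{6,1}$ emerges from integrating $|f^{(6)}(x)|$ in $x$ after the $y$-integral is finite. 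Part (v) repeats the argument with the sharper (\ref{odinnadtsat103}), requiring $l=9$ and $f\in C^{10}$ and picking up the extra $N^{-1/2}$. For part (iv), the choice $l=3$ yields $\sqrt{\V(f(X_N))_{ij}}\leq Const\,\|f\|_{4,1}N^{-1/2}$, which squares to (\ref{Chto13}).

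The delicate part is (ii), where $f$ is not compactly supported yet one must produce the weighted $L^1$-norm $\|f\|_{8,1,+}$ involving the factor $(|x|+1)$. The strategy is to split the $x$-integration into an interior piece $|x|\leq 2\sigma+1$, on which the compact-support analysis from (i) applies after dominating $\|f\|_{C^7([-2\sigma-1,2\sigma+1])}$ by $\|f\|_{8,1,+}$ via the fundamental theorem of calculus, and an exterior piece $|x|>2\sigma+1$, on which one exploits the $1/z$-behaviour of both $\E R_{ii}(z)$ and $g_\sigma(z)$ far from the limiting spectrum to extract an additional $(|x|+1)^{-1}$ factor from $\E R_{ii}(z)-g_\sigma(z)$. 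One integration by parts in $x$, justified by the extra derivative ($l=7$, hence $f\in C^8$), then transfers that weight onto $f^{(l)}$ and yields integrals of $(|x|+1)|f^{(l)}(x)|\,dx$ controlled by $\|f\|_{8,1,+}$.

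The main obstacle is precisely this refined $|x|$-decay in (ii): the crude a priori bound $|R_{ii}(z)|\leq |\Im z|^{-1}$ is far too lossy to generate the needed $(|x|+1)^{-1}$ factor, so one must revisit the master equation underlying Proposition \ref{proposition:prop2} and track the $|x|$-dependence of the error terms. Everywhere else, the argument is a mechanical adaptation of the Helffer-Sj\"ostrand computations of Section 3 of \cite{PRS}, with the crucial observation that all estimates of Proposition \ref{proposition:prop2} were already derived under the Lindeberg conditions (\ref{lind1}) and (\ref{diagreal1}) rather than under full i.i.d.\ hypotheses, so no new probabilistic ingredient beyond Proposition \ref{proposition:prop2} is required to carry parts (i), (iii), (iv), (v) through.
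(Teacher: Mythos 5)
Your proposal is correct and follows essentially the same route as the paper, whose entire proof of this proposition consists of setting up the Helffer--Sj\"ostrand formula with the bound (\ref{estimate-derivative3}) and then invoking the calculations of Section 3 of \cite{PRS} together with the resolvent estimates of Proposition \ref{proposition:prop2}; your matching of the order $l$ of the almost-analytic extension to the degree of $P_l(|\Im z|^{-1})$ in parts (i), (iii), (iv), (v) is exactly that computation. The only caveat concerns part (ii), where the paper gives no more detail than you do: the weighted norm $\|f\|_{8,1,+}$ cannot arise from \emph{extra decay} of $\E R_{ii}(z)-g_\sigma(z)$ at large $|x|$ (extra decay would make a weight unnecessary), but rather from the fact that, once the $x$-dependence of the master-equation error terms is tracked uniformly over $\C\setminus\R$, the bounds grow linearly in $|x|$ and cost one more power of $|\Im z|^{-1}$ (whence $C^8$ in place of $C^7$); your overall strategy of revisiting the master equation on the exterior region is nevertheless the right one.
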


The next proposition is a corollary of Propositions \ref{prop:variance} and \ref{proposition:prop4}.
\begin{proposition}
\label{proposition:prop11}
Let $X_N=\frac{1}{\sqrt{N}} W_N$ be a random real symmetric Wigner matrix
(\ref{offdiagreal}), (\ref{diagreal}) such that
(\ref{LIND1}) is satisfied for some fixed $m\geq 1.$
Then there exists a random real symmetric Wigner matrix $T_N$ and a non-random positive sequence $\epsilon_N\to 0$ as $N \to \infty$ such that
the properties (\ref{mir0}-\ref{mir5}) from Lemma \ref{lemma:2}  are satisfied. 
In addition, for $s>3,$ there exists a constant $const_s$ that depends on $s, \sigma, \sigma_1,$ and $m_4$ such that
for $f\in \mathcal{H}_s$ 
\begin{equation}
\label{17}
\V [f (T_N/\sqrt{N})_{ij}] \leq const_s \frac{\|f\|^2_s}{N}, \ 1\leq i\leq m, \ 1\leq j\leq N.
\end{equation}
\end{proposition}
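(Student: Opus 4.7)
The plan is to combine three ingredients already established in the excerpt: the truncation Lemma \ref{lemma:2}, the Fourier-side variance identity of Proposition \ref{prop:variance}, and the refined resolvent variance bound of Proposition \ref{proposition:prop4}. I would first invoke Lemma \ref{lemma:2} to produce the truncated matrix $T_N$; since $T_N$ coincides with $W_N$ on the rows indexed $\{1,\dots,m\}$ with probability tending to one, it suffices to prove the variance bound for $T_N$. Writing the spectral decomposition $T_N/\sqrt{N}=\sum_k \lambda_k v_k v_k^T$ and setting $G_N(z)=(zI-T_N/\sqrt{N})^{-1}$, one has
\begin{equation*}
f(T_N/\sqrt{N})_{ij}=\int_\R f(t)\,d\mu_{ij}(t),\qquad \mu_{ij}=\sum_k v_k(i)v_k(j)\,\delta_{\lambda_k}.
\end{equation*}
When $i=j$, $\mu_{ii}$ is a probability measure. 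When $i\neq j$, the polarization identity $v_k(i)v_k(j)=\tfrac14[(v_k(i)+v_k(j))^2-(v_k(i)-v_k(j))^2]$ displays $\mu_{ij}=(\tilde\nu_+-\tilde\nu_-)/2$ as a signed combination of the probability measures $\tilde\nu_\pm=\tfrac12\sum_k (v_k(i)\pm v_k(j))^2\delta_{\lambda_k}$, whose Cauchy--Stieltjes transforms equal $\tfrac12(G_{ii}(z)\pm 2G_{ij}(z)+G_{jj}(z))$.

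Next I would apply Proposition \ref{prop:variance}, with $\mathcal{F}$ the $\sigma$-algebra generated by $T_N$ and the regular conditional distribution taken to be $\mu_{ii}$ (respectively $\tilde\nu_\pm$). Combined with $\V[A\pm B]\le 2\V[A]+2\V[B]$, this reduces the claim to a bound of the form
\begin{equation*}
\int_{-\infty}^\infty \V\bigl[\Im G_{kl}(x+iy)\bigr]\,dx \le \frac{C\,P_6(|y|^{-1})}{N},
\end{equation*}
uniformly in $y\neq 0$, where $P_6$ is a degree-six polynomial with nonnegative coefficients and $G_{kl}$ runs over the (at most three) resolvent entries with indices in $\{i,j\}$. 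Indeed, inserting such a bound into Proposition \ref{prop:variance} gives $\V[f(T_N/\sqrt{N})_{ij}]\le (C_s\|f\|_s^2/N)\int_0^\infty e^{-y}y^{2s-1}P_6(y^{-1})\,dy$, and the last integral evaluates to a finite combination $\sum_{k=0}^6 c_k\Gamma(2s-k)$, which converges precisely when $s>3$; this is the origin of the hypothesis on $s$.

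To bound the inner integral I split the $x$-axis at $|x|=3\sigma$. On the bounded window $|x|\le 3\sigma$, the trivial estimate $\E\|G_N(x+iy)\|^2\le y^{-2}$ plugged into Proposition \ref{proposition:prop4} gives $\V[\Im G_{kl}(x+iy)]\le C\,P_6(y^{-1})/N$ uniformly, contributing $O(P_6(y^{-1})/N)$ after integration over the bounded interval. On the tail $|x|>3\sigma$, Proposition \ref{proposition:prop1} (together with the high-moment tail bound on $\|X_N\|$ underlying its Method-of-Moments proof) yields $\E\|G_N(x+iy)\|^2\le C/(|x|-2\sigma)^2$ up to negligible terms; Proposition \ref{proposition:prop4} then produces $\V[\Im G_{kl}(x+iy)]\le C\,P_4(y^{-1})/\bigl(N(|x|-2\sigma)^2\bigr)$, whose $x$-integral is $O(P_4(y^{-1})/N)$. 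Summing the two ranges confirms the claim.

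The main obstacle, in my view, lies in the tail step $|x|>3\sigma$: one needs the decay of $\E\|G_N(x+iy)\|^2$ in $|x|$ to be quantitatively strong enough that the residual event $\{\|X_N\|>2\sigma+\eta\}$, where only the crude bound $\|G_N\|\le y^{-1}$ applies, does not overwhelm the estimate uniformly in small $y$. This forces one to extract from Proposition \ref{proposition:prop1} a polynomial tail bound $\P(\|X_N\|>K)\le C_p K^{-2p}$ with $p$ large enough, obtained by revisiting the moment argument for the truncated matrix. A secondary technical point is that Proposition \ref{proposition:prop4} bounds $\V[G_{kl}]$ only for $k\le m$, so in the polarization step the entry $G_{jj}$ with $j>m$ must be controlled separately; here I would use the weaker but universally valid estimate $\V[G_{jj}]=O(P_6(y^{-1})/N)$ from Proposition \ref{proposition:prop2} on the bounded window, combined with the deterministic bound $|G_{jj}(x+iy)|\le (|x|-\|X_N\|)^{-1}$ on the tail.
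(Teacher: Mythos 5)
Your proposal follows essentially the same route as the paper: truncate via Lemma \ref{lemma:2} and reduce to $T_N=W_N$, represent $f(T_N/\sqrt N)_{ij}$ as an integral against a spectral measure, decompose the off-diagonal (signed) measure into probability measures by polarization, apply Proposition \ref{prop:variance}, insert the resolvent variance bound of Proposition \ref{proposition:prop4}, and note that $\int_0^\infty e^{-y}y^{2s-1}P(y^{-1})\,dy$ converges precisely for $s>3$. The one step where you diverge is the bound on $\int_{\R}\E\|R_N(x+iy)\|^2\,dx$: you split the $x$-axis at the fixed point $3\sigma$ and are then forced to control the event $\{\|X_N\|>2\sigma+\eta\}$, which you flag as the main obstacle and propose to handle with a polynomial tail bound for $\P(\|X_N\|>K)$. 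The paper avoids this entirely by splitting at the \emph{random} point $\|X_N\|$: pathwise, $\|R_N(x+iy)\|^2\le y^{-2}$ for $|x|\le\|X_N\|$ and $\|R_N(x+iy)\|^2\le\big((|x|-\|X_N\|)^2+y^2\big)^{-1}$ otherwise, which gives the deterministic estimate (\ref{555}), $\int_{\R}\|R_N(x+iy)\|^2\,dx\le 2\|X_N\|/y^2+\pi/y$; after taking expectations only $\sup_N\E\|X_N\|<\infty$ (Latala \cite{L}) is needed, and no tail estimate at all. So your ``main obstacle'' is an artifact of the fixed split point. Your secondary concern about $G_{jj}$ with $j>m$ is a fair observation about the stated range $1\le j\le N$ (the paper's sketch of the off-diagonal case is silent on it), but your proposed repair does not quite close it: on the tail the uniform bound $\V[G_{jj}]=O(P_6(y^{-1})/N)$ from Proposition \ref{proposition:prop2} is not integrable in $x$, the deterministic bound carries no factor $1/N$, and interpolating the two only yields $O(N^{-1/2})$. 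Note, however, that in the proof of Theorem \ref{thm:main} the estimate (\ref{17}) is only invoked for $1\le i,j\le m$, and in that range every resolvent entry produced by the polarization is covered by Proposition \ref{proposition:prop4}.
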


\begin{proof}
The existence of random real symmetric Wigner matrix $T_N$ satisfying (\ref{mir0}-\ref{mir5}) has been proven in Lemma \ref{lemma:2}.
Since $\P(X_N=T_N)\to 1$ as $N \to \infty, $ we can assume without loss of generality that $T_N=X_N.$

Let us first consider the diagonal case $i=j.$ Without loss of generality, one can assume $i=1.$
Define a random spectral measure  
\begin{equation*}
\mu(dx, \omega):= \sum_{l=1}^N \delta(x-\lambda_l) \* |\phi_l(1)|^2,
\end{equation*}
where $\lambda_l, \ 1\leq l \leq N, $ are the eigenvalues of $X_N$ and $\phi_l,  1\leq l \leq N, $ are the corresponding normalized eigenvectors.
Since by the result by Latala \cite{L}
\begin{equation*}
\sup_N \E \|X_N\| <\infty,
\end{equation*}
we have 
\begin{equation*}
\E \int |x| \* \mu(dx,\omega) =\E (|X_N|)_{11} <\infty,
\end{equation*}
one can apply Proposition \ref{prop:variance} and obtain
\begin{equation}
\label{vazhno10}
\V [f(X_N)_{11}] \leq Const_s \* \|f\|_s^2 \* \int_0^{\infty} dy\* e^{-y}\* y^{2s-1} \* \int_{-\infty}^{\infty} 
V [ (R_N(x+iy))_{11}] \* dx.
\end{equation}

To estimate the integral $ \int_{-\infty}^{\infty} V [ (R_N(x+iy))_{11}] \* dx$ in (\ref{vazhno10}), one uses the upper bound (\ref{odinnadtsat300})
in Proposition \ref{proposition:prop4} to obtain 
\begin{align}
\label{333}
& \frac{P_4(y^{-1})}{N}\* \E \*\int_{-\infty}^{+\infty} \|R_N(x+iy)\|^2 dx \\
\label{444}
& + \frac{P_4(y^{-1})}{N}\* \E \*\int_{-\infty}^{+\infty} \|R_N(x+iy)\|^{3/2} dx.
\end{align}
We will treat the first term (\ref{333}).  The second term (\ref{444}) can be treated in a similar fashion.
For $x\in [-\|X_N\|, +\|X_N\|], $ we use the trivial bound
\begin{equation*}
\|R_N(x+iy)\|^2\leq \frac{1}{y^2}.
\end{equation*}
For $|x|> \|X_N\|, $ we write 
\begin{equation*}
\|R_N(x+iy)\|^2 \leq \frac{1}{(x-\|X_N\|)^2 +y^2}.
\end{equation*}
Thus, 
\begin{equation}
\label{555}
\int_{-\infty}^{+\infty} \|R_N(x+iy)\|^2 dx \leq \frac{2\*\|X_N\|}{y^2} + \frac{\pi}{y}.
\end{equation}
Since (\cite{L})
\begin{equation*}
\sup_N \E \|X_N\|<\infty,
\end{equation*}
we obtain
\begin{equation}
\label{777}
\V [f(X_N)_{11}] \leq Const_s \* \frac{\|f\|_s^2}{N} \* \int_0^{\infty} dy\* e^{-y}\* y^{2s-1} \*P_4(y^{-1}) \* 
\left(\frac{const_1}{y^2}+\frac{const_2}{y^{1/2}} \right).
\end{equation}
If $s>3, $ the integral in (\ref{777}) converges.

In the off-diagonal case $i\not=j,$ one can consider the (complex-valued) measure
\begin{equation*}
\mu(dx, \omega):= \sum_{l=1}^N \delta(x-\lambda_l) \*\overline{\phi_l(i)} \*\phi_l(j),
\end{equation*}
write it as a linear combination of probability measures, and apply Proposition \ref{prop:variance} to each probability measure in the linear 
combination. Proposition \ref{proposition:prop11} is proven.
\end{proof}

Now, we are ready to prove Theorem \ref{thm:main}.
Let $m$ be a fixed positive integer. Denote by  $W_N^{(m)}$ the $m\times m$ upper-left corner submatrix of $W_N,$
and by $R^{(m)}_N(z)$ the $m\times m$ upper-left corner of the resolvent matrix $R_N(z).$
Our next step is to compute the limiting distribution of the normalized entries of $R^{(m)}_N(z)$  in the limit $N \to \infty.$
In the i.i.d. sertting, this was done in Theorem 1.1 (real symmetric case) 
and Theorem 1.5 (Hermitian case) in \cite{PRS}.  Below, we extend these results to the non-i.i.d. setting.  We start with the real symmetric case.
Define
\begin{align}
\label{ups}
& \Upsilon_N(z):=\sqrt{N} \* \left(R^{(m)}(z)-g_{\sigma}(z)\*I_m \right), \ z \in \C \setminus [-2\*\sigma, 2\*\sigma],\\
\label{psi}
& \Psi_N(z):= \Upsilon_N(z) - g_{\sigma}^2(z)\*W_N^{(m)}= \sqrt{N} \* \left(R^{(m)}(z)-g_{\sigma}(z)\*I_m \right)-g_{\sigma}^2(z)\*W_N^{(m)}.
\end{align}

Cleraly, $\Upsilon_N(z)$ and $\Psi_N(z)$  are random function with values in the space complex symmetric $m\times m$ matrices.
(real symmetric $m\times m$ matrices for real $ x$).
Define
\begin{equation}
\label{padova1}
\varphi(z,w):= \int_{-2\sigma}^{2\sigma} \frac{1}{z-x}\* \frac{1}{w-x}  \* \frac{1}{2 \pi \sigma^2} \sqrt{ 4 \sigma^2 - x^2}  \* dx=
\left \{\begin{array} {r@{\quad:\quad}l} 
-\frac{g_{\sigma}(w)-g_{\sigma}(z)}{w-z} & \text{if} \ \ w\not=z, \\
-g_{\sigma}'(z)& \text{if} \ \ w=z. \end{array} \right.
\end{equation}
for $z,w \in \C \setminus [-2\*\sigma, 2\*\sigma].\ $  
One can write $\varphi(z,w)=\E\left( \frac{1}{z-\eta}\*\frac{1}{w-\eta} \right), \ $
where $\eta$ is a Wigner semicircle law (\ref{polukrug}) random variable.
Let
\begin{align}
\label{padova2}
& \varphi_{++}(z,w):= \int_{-2\sigma}^{2\sigma} \Re \frac{1}{z-x}\* \Re\frac{1}{w-x}  
\* \frac{1}{2 \pi \sigma^2} \sqrt{ 4 \sigma^2 - x^2} \* dx\\
& =\frac{1}{4}\* \left(\varphi(z,w)+\varphi(\bar{z},\bar{w})+\varphi(\bar{z},w)+\varphi(z,\bar{w})\right), \nonumber\\
\label{padova3}
& \varphi_{--}(z,w):= \int_{-2\sigma}^{2\sigma} \Im \frac{1}{z-x}\* \Im\frac{1}{w-x}  
\* \frac{1}{2 \pi \sigma^2} \sqrt{ 4 \sigma^2 - x^2}  \* dx\\
& = -\frac{1}{4}\* \left(\varphi(z,w)+\varphi(\bar{z},\bar{w})-\varphi(\bar{z},w)-\varphi(z,\bar{w})\right), \nonumber\\
\label{padova4}
& \varphi_{+-}(z,w):= \int_{-2\sigma}^{2\sigma} \Re \frac{1}{z-x}\* \Im\frac{1}{w-x}  
\* \frac{1}{2 \pi \sigma^2} \sqrt{ 4 \sigma^2 - x^2}  \* dx\\
& =-\frac{i}{4}\* \left(\varphi(z,w)+\varphi(\bar{z},w)-\varphi(\bar{z},\bar{w})-\varphi(z,\bar{w})\right). \nonumber
\end{align}

\begin{theorem}
\label{thm:resreal}
Let $X_N=\frac{1}{\sqrt{N}} W_N$ be a random real symmetric Wigner matrix
(\ref{offdiagreal}), (\ref{diagreal}).  Let $m$ be a fixed positive integer and assume that for $1\leq i\leq m$ the conditions (\ref{LIND1}) 
and (\ref{AI}) are satisfied.  Also assume that
the Lindeberg type condition (\ref{lind1}) for the fourth moments  of the off-diagonal entries and 
the Lindeberg type condition (\ref{diagreal1}) for the second moments  of the diagonal entries
are satisfied.

Then the random field 
$ \Psi_N(z)$ in (\ref{psi}) converges in finite-dimensional distributions to the random field 
\begin{equation}
\label{functionalconv}
\Psi(z) = g_{\sigma}^2(z)\*Y(z),
\end{equation}
where
$Y(z)=\left(Y_{ij}(z)\right), Y_{ij}(z)=Y_{ji}(z), \ 1\leq i,j \leq m, $  is the Gaussian random field such that 

\begin{align}
\label{dispersii1}
& \Cov(\Re Y_{ii}(z), \Re Y_{ii}(w))= \kappa_4(i)\* \Re g_{\sigma}(z) \* \Re g_{\sigma}(w) +2\*\sigma^4\* \varphi_{++}(z,w), \\
\label{dispersii2}
& \Cov(\Im Y_{ii}(z), \Im Y_{ii}(w))= \kappa_4(i)\* \Im g_{\sigma}(z) \*\Im g_{\sigma}(w) +
2\*\sigma^4\* \varphi_{--}(z,w), \\
\label{dispersii3}
& \Cov(\Re Y_{ii}(z), \Im Y_{ii}(w) )= \kappa_4(i)\* \Re g_{\sigma}(z) \* \Im g_{\sigma}(w)  
+ 2 \*\sigma^4\* \varphi_{+-}(z,w), \\
\label{dispersii4}
& \Cov(\Re Y_{ij}(z),  \Re Y_{ij}(w))= \sigma^4\* \varphi_{++}(z,w), \ i\not=j,\\
\label{dispersii5}
& \Cov(\Im Y_{ij}(z), \Im Y_{ij}(w))= \sigma^4\* \varphi_{--}(z,w), \ i\not=j,\\
\label{dispersii6}
& \Cov(\Re Y_{ij}(z), \Im Y_{ij}(w) )= \sigma^4\* \varphi_{+-}(z,w), \ i\not=j,
\end{align}
where  $\kappa_4(i)=m_4(i)-3\sigma^4, \ 1\leq i\leq m,$ and $m_4(i)$ is defined in (\ref{AI}).

In addition, for any finite $r\geq 1, $ the
entries $Y_{i_lj_l}(z_l), \ 1\leq i_l\leq j_l \leq m, \ 1\leq l\leq r,$ are independent if for any $1\leq l_1\not=l_2\leq r$ one has 
$(i_{l_1}, j_{l_1})\not=(i_{l_2}, j_{l_2}).$
\end{theorem}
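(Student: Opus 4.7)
My plan is to use the Schur complement formula to express the $m \times m$ corner block of the resolvent as a quadratic form in the rows of $W_N$ lying off that block, and then apply a multivariate CLT for quadratic forms. Let $X_N^{(m^c,m^c)}$ denote the lower-right $(N-m)\times(N-m)$ block of $X_N$ and $R_{m^c}(z):=(zI_{N-m}-X_N^{(m^c,m^c)})^{-1}$. Schur's identity gives
\[
(R^{(m)}_N(z))^{-1} = zI_m - X^{(m,m)}_N - \tfrac{1}{N}\, W^{(m,m^c)}_N R_{m^c}(z) W^{(m^c,m)}_N.
\]
The semicircle equation (\ref{semicircle}) rewrites $g_\sigma^{-1}(z) = z - \sigma^2 g_\sigma(z)$, so I would set $(R^{(m)}_N(z))^{-1} = g_\sigma^{-1}(z)I_m + B_N(z)$ with
\[
B_N(z) = \sigma^2 g_\sigma(z) I_m - X^{(m,m)}_N - \tfrac{1}{N}\, W^{(m,m^c)}_N R_{m^c}(z) W^{(m^c,m)}_N.
\]
Standard quadratic-form concentration combined with Propositions \ref{proposition:prop1}--\ref{proposition:prop2} applied to $R_{m^c}(z)$ gives $\|B_N(z)\|=O_p(N^{-1/2})$. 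Expanding $R^{(m)}_N = g_\sigma I_m - g_\sigma^2 B_N + O_p(N^{-1})$ and using $\sqrt{N}\, X^{(m,m)}_N = W^{(m)}_N$, the $g_\sigma^2 W^{(m)}_N$ piece produced inside $\sqrt{N}(R^{(m)}_N - g_\sigma I_m)$ cancels the one subtracted in $\Psi_N$, leaving
\[
\Psi_N(z) = g_\sigma^2(z)\, Y^{(N)}(z) + o_p(1),
\]
where $Y^{(N)}_{ij}(z) := \tfrac{1}{\sqrt{N}} \sum_{k,l > m} (W_N)_{ik}\, R_{m^c}(z)_{kl}\, (W_N)_{jl} - \sqrt{N}\, \sigma^2 g_\sigma(z)\, \delta_{ij}$.

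For the CLT step, I would first invoke Lemma \ref{lemma:2} to replace $W_N$ by the truncated Wigner matrix $T_N$, whose first $m$ rows are bounded entrywise by $\epsilon_N N^{1/4}$; by condition $(A_1)$ the probability that $T_N$ differs from $W_N$ tends to zero. The key structural observation is that the entries $(T_N)_{ik}$ with $1 \le i \le m < k$ are independent of $X_N^{(m^c,m^c)}$ by the Wigner structure, so one can condition on the latter. Conditionally, $(Y^{(N)}(z_1), \ldots, Y^{(N)}(z_r))$ is a collection of quadratic forms in independent bounded mean-zero random variables with deterministic kernels $R_{m^c}(z_s)/\sqrt{N}$, to which the multivariate CLT for quadratic forms discussed in the Appendix applies. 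The Lindeberg assumption $(A_1)$ supplies negligibility of individual summands, while $(A_2)$ supplies convergence of the averaged fourth moments. The operator norms of the kernels are controlled via $\|R_{m^c}(z_s)\|\le |\Im z_s|^{-1}$, so the conditional variances are of order one.

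The asymptotic covariances follow by a direct expansion. For $i = j$, using the symmetry $R_{m^c}(z)_{kl}=R_{m^c}(z)_{lk}$,
\begin{align*}
\Cov\bigl(Y^{(N)}_{ii}(z), Y^{(N)}_{ii}(w)\bigr)
= \tfrac{1}{N} \sum_{k > m} \kappa_4\bigl((W_N)_{ik}\bigr)\, R_{m^c}(z)_{kk} R_{m^c}(w)_{kk} + \tfrac{2\sigma^4}{N} \Tr\bigl(R_{m^c}(z) R_{m^c}(w)\bigr) + o(1).
\end{align*}
Concentration of $R_{m^c}(z)_{kk}$ around $g_\sigma(z)$ together with $(A_2)$ converts the first term to $\kappa_4(i) g_\sigma(z) g_\sigma(w)$, while $\tfrac{1}{N}\Tr(R_{m^c}(z)R_{m^c}(w)) \to \varphi(z,w)$ directly from (\ref{padova1}). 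Separating real and imaginary parts through the polarizations (\ref{padova2})--(\ref{padova4}) and using $Y(\bar z) = \overline{Y(z)}$ yields (\ref{dispersii1})--(\ref{dispersii3}). For $i \neq j$ the diagonal ($k=l$) cross term vanishes since $(W_N)_{ik}$ and $(W_N)_{jl}$ are independent mean-zero, leaving only the $\sigma^4\varphi(z,w)$ contribution and hence (\ref{dispersii4})--(\ref{dispersii6}). Asymptotic independence for distinct index pairs $\{i,j\} \neq \{i',j'\}$ follows because at least one row of $W_N$ appears in $Y^{(N)}_{ij}$ but not in $Y^{(N)}_{i'j'}$; that independent mean-zero row factors out of any mixed expectation, so the limiting covariance vanishes and joint Gaussianity upgrades this to full independence.

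The hard part will be the multivariate CLT for quadratic forms with a random but concentrated kernel in the non-i.i.d.\ setting: uniformly in the conditioning on $X^{(m^c,m^c)}_N$, the Lindeberg condition $(A_1)$ must still deliver a joint Gaussian limit, and the fluctuations of $\tfrac{1}{N}\sum_k\kappa_4((W_N)_{ik})R_{m^c}(z)_{kk}R_{m^c}(w)_{kk}$ and of $\tfrac{1}{N}\Tr(R_{m^c}(z)R_{m^c}(w))$ around their deterministic limits must be shown to be $o(1)$. The truncation of Lemma \ref{lemma:2} together with the variance estimate of Proposition \ref{proposition:prop4} are the main tools for these steps; these, combined with the Appendix's CLT tailored to Wigner-type quadratic forms, close out the argument.
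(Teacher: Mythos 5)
Your proposal is correct and follows essentially the same route as the paper: Schur complement reduction of the corner resolvent block to quadratic forms in the first $m$ rows, truncation via Lemma \ref{lemma:2}, the multivariate CLT for quadratic forms from the Appendix applied conditionally on the lower-right block, and identification of the limiting covariances through the semicircle law and the resolvent concentration estimates of Proposition \ref{proposition:prop2}. The only cosmetic difference is that you invert $g_\sigma^{-1}I_m+B_N$ by a Neumann expansion where the paper keeps the exact identity $R^{(m)}=(g_\sigma^{-1}I_m-\tfrac{1}{\sqrt N}\Gamma_N)^{-1}$; the content is the same.
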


Now, we consider the Hermitian case.  As before, we define by (\ref{psi}) 
the matrix-valued random field $\Psi_N(z), \ z \in \C \setminus [-2\sigma, 2\sigma].\ \Psi_N(x)$ is Hermitian for real $ x$ and, more generally, 
$\Psi_N(z)=\Psi_N(\bar{z})^*. $

\begin{theorem}
\label{thm:resherm}
Let $X_N=\frac{1}{\sqrt{N}} W_N$ be a random real Hermitian Wigner matrix
(\ref{offdiagherm1}-\ref{diagherm}).  Let $m$ be a fixed positive integer and assume that for $1\leq i\leq m$ the conditions (\ref{LIND1}) 
and (\ref{AI}) are satisfied.  Also assume that 
the Lindeberg type condition (\ref{lind1}) for the fourth moments  of the off-diagonal entries and 
the Lindeberg type condition (\ref{diagreal1}) for the second moments  of the diagonal entries
are satisfied.

Then the random field 
$ \Psi_N(z) $ converges in finite-dimensional distributions to the random field 
\begin{equation}
\label{functionalconv1}
\Psi(z) = g_{\sigma}^2(z)\*Y(z),
\end{equation}
where 
$Y(z)=\left(Y_{ij}(z)\right), \ 1\leq i,j \leq m, \ $  is the Gaussian random field such that 

\begin{align}
\label{dispersii11}
& \Cov(\Re Y_{ii}(z), \Re Y_{ii}(w))= \kappa_4(i)\* \Re g_{\sigma}(z) \* \Re g_{\sigma}(w) +\sigma^4\* \varphi_{++}(z,w), \\
\label{dispersii12}
& \Cov(\Im Y_{ii}(z), \Im Y_{ii}(w))= \kappa_4(i)\* \Im g_{\sigma}(z) \*\Im g_{\sigma}(w) +
\sigma^4\* \varphi_{--}(z,w), \\
\label{dispersii13}
& \Cov(\Re Y_{ii}(z), \Im Y_{ii}(w) )= \kappa_4(i)\* \Re g_{\sigma}(z) \* \Im g_{\sigma}(w)  
+ \sigma^4\* \varphi_{+-}(z,w), \\
\label{dispersii14}
& \Cov(\Re Y_{ij}(z),  \Re Y_{ij}(w))= \frac{1}{2}\*\sigma^4\* (\varphi_{++}(z,w)+\varphi_{--}(z,w)), \ i\not=j, \\
\label{dispersii15}
& \Cov(\Im Y_{ij}(z), \Im Y_{ij}(w))= \frac{1}{2}\*\sigma^4\* (\varphi_{++}(z,w)+\varphi_{--}(z,w)), \ i\not=j,\\
\label{dispersii16}
& \Cov(\Re Y_{ij}(z), \Im Y_{ij}(w) )= \frac{1}{2}\*\sigma^4\* (\varphi_{+-}(z,w)-\varphi_{+-}(w,z))       , \ i\not=j.
\end{align}
where $\kappa_4(i)= m_4(i)- 2\sigma^4, \ 1\leq i\leq m,$ and $m_4(i)$ is defined in (\ref{AI}).

In addition, for any finite $r\geq 1,$ the 
entries $Y_{i_lj_l}(z_l), \ 1\leq i_l\leq j_l \leq m, \ 1\leq l\leq r,$ are independent provided 
$(i_{l_1}, j_{l_1})\not=(i_{l_2}, j_{l_2}) $ for $1\leq l_1\not=l_2\leq r.$
\end{theorem}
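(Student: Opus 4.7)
The plan is to mirror the real symmetric argument of Theorem~\ref{thm:resreal}, modifying the moment bookkeeping to account for the complex off-diagonal structure $\E W_{ij}^2=0,\ \E|W_{ij}|^2=\sigma^2$. By Lemma~\ref{lemma:2} we may first replace $W_N$ by a matrix whose first $m$ rows are bounded in absolute value by $\epsilon_N N^{1/4}$, losing nothing in distribution. For each $1\leq i\leq m$, apply the Schur complement formula to obtain
\begin{equation*}
R_{ii}(z)=\frac{1}{z-X_{ii}-\frac{1}{N}\mathbf{w}_i^{*}G^{(i)}(z)\mathbf{w}_i},
\end{equation*}
where $\mathbf{w}_i$ is the $i$th column of $W_N$ with the diagonal entry removed and $G^{(i)}(z)$ is the resolvent of the $(N-1)\times(N-1)$ minor $X_N^{(i)}$. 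The vector $\mathbf{w}_i$ is independent of $G^{(i)}(z)$, so $\frac{1}{N}\mathbf{w}_i^{*}G^{(i)}(z)\mathbf{w}_i$ concentrates around $\frac{\sigma^2}{N}\Tr G^{(i)}(z)\to\sigma^2 g_\sigma(z)$; together with \eqref{semicircle} this yields $R_{ii}(z)\to g_\sigma(z)$ in probability.

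Linearizing the reciprocal around $z-\sigma^2 g_\sigma(z)=g_\sigma(z)^{-1}$ gives
\begin{equation*}
\sqrt{N}\bigl(R_{ii}(z)-g_\sigma(z)\bigr)-g_\sigma^2(z)W_{ii}=g_\sigma^2(z)\cdot\sqrt{N}\!\left(\tfrac{1}{N}\mathbf{w}_i^{*}G^{(i)}(z)\mathbf{w}_i-\tfrac{\sigma^2}{N}\Tr G^{(i)}(z)\right)+o_{\P}(1),
\end{equation*}
since the deterministic drift $\sqrt{N}\bigl(\tfrac{\sigma^2}{N}\Tr G^{(i)}(z)-\sigma^2 g_\sigma(z)\bigr)\to 0$ by Proposition~\ref{proposition:prop2} applied to the minor (and standard rank-one perturbation bounds relating $G^{(i)}$ to $R_N$). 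The dominant term is a centered Hermitian quadratic form in $\mathbf{w}_i$, to which the CLT for quadratic forms from the Appendix applies: the truncation scale $N^{1/4}$ encoded in $(A_1)$ is precisely what the quadratic-form CLT requires, and $(A_2)$ delivers the deterministic value of the fourth-cumulant contribution. A direct computation using $\E W_{ij}W_{ik}=0$ gives, for a fixed deterministic matrix $A$,
\begin{equation*}
\V\!\left[\tfrac{1}{\sqrt{N}}\bigl(\mathbf{w}_i^{*}A\mathbf{w}_i-\sigma^2\Tr A\bigr)\right]=\tfrac{\kappa_4(i)}{N}\sum_j|A_{jj}|^2+\tfrac{\sigma^4}{N}\sum_{j,k}|A_{jk}|^2,
\end{equation*}
so with $A=G^{(i)}(z),\ A'=G^{(i)}(w)$ the covariances \eqref{dispersii11}--\eqref{dispersii13} emerge after splitting real and imaginary parts and identifying the limits $\tfrac{1}{N}\Tr G^{(i)}(z)G^{(i)}(w)^{*}\to\varphi(z,\bar w)$ and $\tfrac{1}{N}\sum_j G^{(i)}_{jj}(z)\overline{G^{(i)}_{jj}(w)}\to g_\sigma(z)\overline{g_\sigma(w)}$. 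The factor $\sigma^4$ in place of the $2\sigma^4$ of the real symmetric case is the imprint of $\E W_{ij}^2=0$.

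The off-diagonal entries $R_{ij}(z)$, $1\leq i<j\leq m$, are treated by the analogous $2\times 2$ Schur complement, producing a bilinear form $\tfrac{1}{N}\mathbf{w}_i^{*}G^{(ij)}(z)\mathbf{w}_j$ in the two independent column vectors; its bilinear-CLT version (same Appendix) gives the complex Gaussian limit. The balanced complex structure $\E W_{ij}^2=0$ forces the cross-term $\tfrac{1}{N}\Tr G^{(ij)}(z)G^{(ij)}(w)$ to drop out of the covariance, which is exactly what produces the symmetric formulas \eqref{dispersii14}--\eqref{dispersii16}. Joint finite-dimensional convergence over $(i_l,j_l,z_l)$, $l=1,\ldots,r$, follows from the Cram\'{e}r--Wold device applied to arbitrary real linear combinations of the real and imaginary parts of all the resulting quadratic and bilinear forms, and the mutual independence of the truncated rows $\mathbf{w}_1,\ldots,\mathbf{w}_m$ converts uncorrelatedness across distinct index pairs into independence in the Gaussian limit.

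The principal difficulty is verifying the Lindeberg hypothesis of the quadratic/bilinear-form CLT under the single assumption $(A_1)$: this dictates the $N^{1/4}$ truncation scale in Lemma~\ref{lemma:2}, and requires careful control of both the linearization remainder and the substitution of $G^{(i)}$ (resp.\ $G^{(ij)}$) for the full resolvent $R_N$. The latter is routine given Proposition~\ref{proposition:prop2} and the rank-one (resp.\ rank-two) perturbation identity $R_N-G^{(i)}=O(1/|\Im z|)$ in operator norm after zero-padding; no step beyond this requires anything not already contained in the real symmetric proof.
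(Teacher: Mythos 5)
Your proposal follows the same core strategy as the paper's proof of Theorems \ref{thm:resreal}--\ref{thm:resherm} --- reduce the resolvent entries to centered quadratic forms in the removed rows via a Schur complement, and invoke the CLT for quadratic forms from the Appendix after the $N^{1/4}$ truncation of Lemma \ref{lemma:2} --- but your decomposition is different: you take a separate $1\times1$ (resp.\ $2\times2$) Schur complement for each diagonal (resp.\ off-diagonal) entry, whereas the paper removes the entire $m\times m$ upper-left corner at once, writing $R_N^{(m)}(z)=\bigl(zI_m-N^{-1/2}W_N^{(m)}-M^*\tilde R M\bigr)^{-1}$ as in (\ref{ugol}) and then expanding $\bigl(g_\sigma^{-1}(z)I_m-N^{-1/2}\Gamma_N(z)\bigr)^{-1}$ as in (\ref{qatar}). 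The block version buys two things. First, the single minor resolvent $\tilde R$ is independent of \emph{all} the removed columns $x^{(1)},\dots,x^{(m)}$ and of $W_N^{(m)}$, so the multidimensional quadratic-form CLT (Theorem \ref{thm:mult-qf} in the Hermitian case) applies verbatim with the matrices $B^{s,t}$ of (\ref{summamatric}) and yields joint Gaussianity and the independence across distinct index pairs in one stroke. In your entrywise version, $G^{(i)}$ still depends on the rows $\mathbf w_j$ for $j\le m$, $j\ne i$, the minors $G^{(i)}$, $G^{(ij)}$, $G^{(j)}$ all differ and overlap with entries appearing in the other forms (e.g.\ $W_{12}$ sits inside $\mathbf w_1$ for $R_{11}$ but also inside the $2\times2$ corner for $R_{12}$), so your Cram\'er--Wold step needs an additional --- routine, rank-$O(m)$ perturbation --- argument replacing all the minors by a common one before independence of the truncated rows can be converted into independence of the Gaussian limits; as written this step is asserted rather than proved. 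Second, the matrix identity treats diagonal and off-diagonal entries uniformly, so no separate bilinear-form analysis is required. Your Hermitian covariance bookkeeping ($\kappa_4(i)=m_4(i)-2\sigma^4$, the single $\sigma^4$ replacing the real-symmetric $2\sigma^4$ because $\E W_{ij}^2=0$ kills the pairing $j=m$, $k=l$) is correct and matches (\ref{dispersii11})--(\ref{dispersii16}).
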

\begin{remark}
If the distribution of the entries of $W_N$ does not depend on $N,$  
the random field 
\begin{equation*}
\Upsilon_N(z)=\sqrt{N} \* \left(R^{(m)}(z)-g_{\sigma}(z)\*I_m \right), \ z \in \C \setminus [-2\sigma, 2\sigma]
\end{equation*}
converges in finite-dimensional distributions to
$ g_{\sigma}^2(z)\*(Y(z) +W^{(m)}),$ 
where $Y(z)$ is independent from $W^{(m)}.$
\end{remark}
Below, we sketch the proof of Theorem \ref{thm:resreal}.  The proof in the Hermitian case is very similar.
\begin{proof}
As in \cite{PRS}, one can write
\begin{equation}
\label{ugol}
R_N^{(m)}(z) = \left (z\*I_m - X^{(m)} - M^*\* \tilde{R}\* M \right)^{-1}=\left (z\*I_m - \frac{1}{\sqrt{N}}\*W_N^{(m)} - 
M^*\* \tilde{R}\* M \right)^{-1} ,
\end{equation}
where $X_N^{(m)}$ is the $m\times m$ upper-left corner submatrix of $X_N, \ \tilde{X}^{(N-m)}$ is the \\
$(N-m)\times(N-m) $ 
lower-right corner submatrix of $X_N, $
\begin{equation*}
\tilde{R}_N(z)= \left(z\* I_{N-m} - \tilde{X}^{(N-m)}\right)^{-1},
\end{equation*} 
is the resolvent of $\tilde{X}^{(N-m)}, $ and $M$ is the the $(N-m) \times m \ $ lower-left corner submatrix of $X_N.$
We will denote by $x^{(1)}, \ldots, x^{(m)} \in \R^{N-m} $ the (column) vectors that form $M,$ and by $M^*$
the adjoint matrix of $M.$

It follows from Proposition \ref{proposition:prop1} that $\tilde{R}_N(z)$ is well defined for any fixed $z \in \C \setminus [-2\sigma, 2\sigma]$ 
with probability going to $1.$ 

Define the $m\times m$ matrix $\Gamma_N(z)$ as
\begin{equation}
\label{gammamatrix}
(\Gamma_N)_{ij}(z)=
(W_N)_{ij} + \sqrt{N} \* \left(\langle x^{(i)}, \tilde{R}_N(z) \* x^{(j)} \rangle - \sigma^2\*g_{\sigma}(z)\*\delta_{ij} \right),
\ 1\leq i,j \leq m.
\end{equation}
Then
\begin{equation}
\label{chetyreugla}
\Gamma_N(z)=W_N^{(m)} + Y_N(z),
\end{equation}
where 
\begin{equation}
\label{mnogouglov}
(Y_N(z))_{ij}= Y_{ij}(z)= \sqrt{N} \* \left(\langle x^{(i)}, \tilde{R}(z) \* x^{(j)} \rangle - 
\sigma^2\*g_{\sigma}(z)\*\delta_{ij} \right),\ 1\leq i,j \leq m.
\end{equation}

Equations (\ref{ugol}) and (\ref{gammamatrix}) imply
\begin{equation}
\label{qatar}
R^{(m)}(z)= \left(\frac{1}{g_{\sigma}(z)}\* I_m -\frac{1}{\sqrt{N}} \* \Gamma_N(z)\right)^{-1}.
\end{equation}
It will follow from the Central Limit Theorem for Quadratic Forms (see discussion below and the Appendix) that  $\|\Gamma_N(z)\|$  
is bounded in probability. This would imply that 
\begin{equation}
\label{uefa}
\Upsilon_N(z)=\sqrt{N} \* \left(R^{(m)}(z)- g_{\sigma}(z) \*I_m\right)= g^2_{\sigma}(z)\* \Gamma_N(z) +o(1),
\end{equation}
in probability (meaning that the error term goes to zero in probability), and 
\begin{equation}
\label{uefa1}
\Psi_N(z)= \sqrt{N} \* \left(R^{(m)}(z)-g_{\sigma}(z)\*I_m \right)-g_{\sigma}^2(z)\*W_N^{(m)}= g^2_{\sigma}(z)\*Y_N(z) +o(1),
\end{equation}
in probability.

To estimate $\|\Gamma_N(z)\|,$ where $\Gamma_N(z)=W_N^{(m)} + Y_N(z),$ we note that for fixed $m, \ \|W_N^{(m)}\|$ is bounded in probability.  
Let us consider in more detail $Y_N(z).$   Assume that
$z$ is fixed and $\Im z \not=0.$
It follows from 
\begin{equation*}
\E Y_N(z)=  \sqrt{N} \*\sigma^2\*(g_N(z)-g_{\sigma}(z))\*I_m,
\end{equation*}
and Proposition \ref{proposition:prop2} that $\E Y_N(z)\to 0.$
Thus,
\begin{equation}
\label{jap}
Y_N(z)_{ij}=  \sqrt{N} \* \left(\langle x^{(i)}, \tilde{R}(z) \* x^{(j)} \rangle - \E \langle x^{(i)}, \tilde{R}(z) \* x^{(j)} \rangle \right)+o(1),
\ 1\leq i,j\leq m.
\end{equation}
We note that the vectors $x^{(i)}, \ 1\leq i \leq m, $ are independent from $\tilde{R}(z).$  
In the Appendix, we point out that
the Central Limit Theorem for Quadratic Forms also holds in the non-i.i.d. case under the conditions on the entries of
$x^{(i)}, \ 1\leq i\leq m,$  that are equivalent to (\ref{LIND1}).  This implies that $\|Y_N (z)\|$ is bounded in probability, and therefore
$\|\Gamma_N(z)\|$ is bounded in probability as well, which implies (\ref{uefa}-\ref{uefa1}).

To study the finite-dimensional distributions of $Y_N(z),$ we fix a positive integer $p\geq 1,$ and consider $z_1, \ldots, z_p \in \C \setminus \R.$
Taking into account (\ref{jap}), the problem is reduced to the question about the joint distribution of the entries 
$\sqrt{N} \big[(R_N(z_l))_{i_l, j_l}- \E (R_N(z_l))_{i_l, j_l}\big], \ 1\leq i_l\leq j_l\leq m, \ 1\leq l\leq p.$  
To this end, we apply Theorem \ref{thm:mult-real-qf} in the Appendix with $r=m,$ and
\begin{equation}
\label{summamatric}
B_N^{s,t}= \sum_{l=1}^p \left( a_{s,t}^{(l)}\* \Re( \tilde{R}(z_l)) + b_{s,t}^{(l)} \* \Im( \tilde{R}(z_l))\right), \ \ 1\leq s \leq t \leq m,
\end{equation}
where $ a_{s,t}^{(l)}, \ b_{s,t}^{(l)},  \ 1\leq s \leq t \leq m, \ 1\leq l \leq p, \ $  are arbitrary real numbers,
and 
\begin{equation*}
y_N^{(s)}= \frac{\sqrt{N}}{\sigma}\*x^{(s)}, \ 1\leq s\leq m.
\end{equation*}

The condition (i) of Theorem \ref{thm:mult-real-qf} is equivalent to (\ref{LIND1}).
The condition (ii) is automatically satisfied as long as $\Im z_l\not=0, \ 1\leq l\leq m.$
Conditions (iii) and (iv) are equivalent to

\begin{align}
\label{91}
& \frac{1}{N-m}\* \Tr \left(\Re( \tilde{R}(z)) \* \Re( \tilde{R}(w)) \right) \to  \varphi_{++}(z,w),\\
\label{92}
& \frac{1}{N-m}\*\Tr\left(\Im( \tilde{R}(z)) \* \Im( \tilde{R}(w)) \right) \to  \varphi_{--}(z,w),\\
\label{93}
& \frac{1}{N-m}\* \Tr \left(\Re( \tilde{R}(z) \*\Im( \tilde{R}(w)\right)  \to  \varphi_{+-}(z,w),  \\
\label{94}
& \frac{1}{N-m} \sum_{j=m+1}^N \kappa_4((W_N)_{ij})\* (\Re( \tilde{R}(z)))_{jj} \* (\Re( \tilde{R}(w)))_{jj} \to  \kappa_4(i) \*
\Re(g_{\sigma}(z)) \*\Re(g_{\sigma}(w)),\\
\label{95}
& \frac{1}{N-m} \sum_{j=m+1}^N \kappa_4((W_N)_{ij})\*(\Im( \tilde{R}(z)))_{jj} \* (\Im( \tilde{R}(z)))_{jj} \to 
\kappa_4(i)\*\Im(g_{\sigma}(z)) \* \Im(g_{\sigma}(w)),\\
\label{96}
& \frac{1}{N-m} \sum_{j=m+1}^N \kappa_4((W_N)_{ij})\* (\Re( \tilde{R}(z)))_{jj}\*(\Im( \tilde{R}(w)))_{jj} \to \kappa_4(i)\*
\Re(g_{\sigma}(z))\*\Im(g_{\sigma}(w)),
\end{align}
for $z,w \in \C \setminus [-2\sigma, 2\sigma], \ 1\leq i\leq m,$ 
where $\varphi_{++}(z,w), \varphi_{--}(z,w),$ and $\varphi_{+-}(z,w)$ are defined in (\ref{padova2}-\ref{padova4}), and
the convergence is in probability.  To make the formulas (\ref{94}-\ref{96}) look less cumbersome, we label the diagonal entries
of the $(N-m)\times(N-m)$ matrices $\Re(\tilde{R}(z)), \ \Im(\tilde{R}(z))$ by index $j=m+1, \ldots, N.$

The conditions (\ref{91}-\ref{93}) follow from the semicircle law,
and (\ref{94}-\ref{96}) follow from the estimates (\ref{odinnadtsat100}) and (\ref{odinnadtsat102}) in Proposition \ref{proposition:prop2}.
The details are left to the reader.  
Theorem \ref{thm:mult-real-qf} now implies that $Y_N(z)$ converges in finite-dimensional distributions to $Y(z)$ for $\Im z \not=0.$ 
For $z\in \R\setminus [-2\sigma, 2\sigma], $ one can replace $\tilde{R}(z)$ by $h(X_N)\*\tilde{R}(z),$ where $h$ satisfies (\ref{amerika}) and 
repeat the arguments above since $\P(\tilde{R}(z)\not=h(X_N)\*\tilde{R}(z))\to 0$ as $N \to \infty.$
\end{proof}
To complete the proof of Theorem \ref{thm:main}, we first restrict our attention to the four time continuously differentiable test functions 
with compact support.
Let $f \in C^4_c(\R).$  It follows from Theorem \ref{thm:real} and Proposition \ref{proposition:prop3} 
that the result of Theorem \ref{thm:main} holds for finite linear combinations
\begin{equation}
\label{1200}
\sum_{l=1}^k a_l\* h_l(x)\*\frac{1}{z_l-x}, \ z_l \not\in [-2\sigma, 2\sigma], \ 1\leq l \leq k, 
\end{equation}
where $h_l\in C^{\infty}_c(\R), \ 1\leq l\leq k,$ satisfies (\ref{amerika}).
By Stone-Weierstrass theorem (see e.g \cite{RS}), one can approximate an arbitrary $C^4_c(\R)$ by functions of the form (\ref{1200}).  Moreover, 
if $supp(f)\subset [-A, A], $ one can choose the approximating sequance in such a way that $ supp(h_l) \subset [-A-1, A+1].$ 
Applying the bound (\ref{Chto13}) in Proposition \ref{proposition:prop3}, we show that
\begin{equation*}
\V [ \sqrt{N} \* ( f(X_N)_{ij}- \sum_{l=1}^k a_l\* (h_l(X_N) R_N)_{ij} ) ]
\end{equation*}
can be made arbitrary small uniformly in $N,$ which finishes the proof for $f \in C^4_c(\R).$

To extend the proofs to the case of $ f \in \mathcal{H}_s,$  for some $s>3,$ we use the estimate (\ref{17}) in
Proposition \ref{proposition:prop11} and approximate such $f$ by a sequence $\{f_n\}_{n\geq 1}$ such that
\begin{equation}
\|f-f_n\|_s\to 0, \ \text{as} \ n\to \infty, \ f_n\in C^4_c(\R), \ n\geq 1.
\end{equation}
This finishes the proof of Theorem \ref{thm:main}.

\begin{appendix}
\section{ \bf{Central Limit Theorem for Quadratic Forms}}
\label{sec:clt}
The appendix is devoted to the formulation of the CLT type results for the quadratic forms $y_N^\ast B y_N$ where 
$y_N$ is a random $N$-vector that contains independent entries with finite 
fourth moment and $B$ is a random $N \times N$ Hermitian matrix.  The formulated results and their proofs are similar to the results in 
\cite{BY}, \cite{CDF} (see the appendix by Baik and Silverstein), and \cite{BGM} since the arguments presented there work with small changes in 
the non-i.i.d. setting as well.

First we present the case where the entries of $Y_N$ are complex and then the case where the entries are real.  

\begin{theorem}[Central Limit Theorem for Quadratic Forms] \label{thm:qf}
Let $B=(b_{ij})_{1 \leq i,j \leq N}$ be a $N \times N$ random Hermitian matrix and $y_N=(y_{Nj})_{1 \leq j \leq N}$ be an independent vector of size 
$N$ which contains independent complex standardized entries such that
$ \sup_{N,j} \E|y_{Nj}|^4 =m_4 < \infty $
and $\E(y_{Nj}^2) = 0$.  Assume that
\begin{enumerate}[(i)]
\item for all $\epsilon>0$, 
\begin{equation} \label{Y-lf}
        \frac{1}{N}\sum_{j=1}^N \E\left[ \left| |y_{Nj}|^2-1 \right|^2 \indicator{|y_{Nj}|^2-1| > \epsilon \sqrt{N}}\right] \longrightarrow 0
\end{equation}
as $N \rightarrow \infty$,
\item there exists a constant $a>0$ (not depending on $N$) such that $\|B\| \leq a$,
\item $\frac{1}{N} \Tr B^2$ converges in probability to a number $a_2$,
\item $\frac{1}{N} \sum_{i=1}^N b_{ii}^2 \* \kappa_4(y_{Ni})$ converges in probability to a number $a_1,$  
\end{enumerate}
where
\begin{equation}
\label{kappaiherm}
\kappa_4(y_{Ni}):=\E|y_{Ni}|^4 -2, \ 1\leq i \leq N.
\end{equation}
Then the random variable $\frac{1}{\sqrt{N}}(y_N^\ast B y_N - \Tr B)$ converges in distribution to a Gaussian random variable with mean zero and variance
\begin{equation*}
       v^2= a_1 + a_2.
\end{equation*}
\end{theorem}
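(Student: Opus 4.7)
The plan is to reduce to the classical martingale central limit theorem for quadratic forms, following the strategy of \cite{BY}, \cite{CDF}, and \cite{BGM}. Conditioning on $B$ (permitted since $B$ and $y_N$ are independent) and introducing the filtration $\mathcal{F}_k := \sigma(B, y_{N1}, \ldots, y_{Nk})$, I use $\E y_{Nk} = 0$, $\E|y_{Nk}|^2 = 1$, and $\E y_{Nk}^2 = 0$ to telescope conditional expectations and obtain the martingale difference decomposition
\begin{equation*}
y_N^{\ast} B y_N - \Tr B = \sum_{k=1}^N D_k, \qquad D_k := b_{kk}(|y_{Nk}|^2 - 1) + \overline{y_{Nk}} S_k + y_{Nk} \overline{S}_k,
\end{equation*}
where $S_k := \sum_{j<k} b_{kj} y_{Nj}$ is $\mathcal{F}_{k-1}$-measurable. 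The task then reduces to verifying the two hypotheses of the martingale CLT (Brown/McLeish): convergence in probability of the predictable quadratic variation to $v^2$, and a conditional Lindeberg condition.

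For the predictable quadratic variation, a direct computation using $\E y_{Nk}^2 = 0$ yields
\begin{equation*}
\E[D_k^2 \mid \mathcal{F}_{k-1}] = b_{kk}^2 (\E|y_{Nk}|^4 - 1) + 2 |S_k|^2 + R_k,
\end{equation*}
where $R_k = 4 b_{kk}\,\Re\bigl(\overline{\E(|y_{Nk}|^2 y_{Nk})}\,S_k\bigr)$ is a mean-zero cross term which vanishes under the natural convention $\E(|y_{Nk}|^2 y_{Nk})=0$, and in general contributes $o_{\mathbb P}(1)$ after normalization by $N^{-1}$ via a variance estimate that uses $\|B\|\leq a$ and $m_4$. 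With $\kappa_4(y_{Nk})=\E|y_{Nk}|^4-2$ and $\E \sum_k |S_k|^2=\frac{1}{2}(\Tr B^2 - \sum_k b_{kk}^2)$,
\begin{equation*}
\frac{1}{N}\sum_{k=1}^N \E[D_k^2 \mid \mathcal{F}_{k-1}] = \frac{1}{N}\sum_k b_{kk}^2 \kappa_4(y_{Nk}) + \frac{1}{N}\Tr B^2 + \frac{2}{N}\Bigl(\sum_k |S_k|^2 - \E\sum_k |S_k|^2\Bigr) + o_{\mathbb P}(1).
\end{equation*}
Hypothesis (iv) drives the first summand to $a_1$ and (iii) drives the second to $a_2$; the parenthesized concentration of a bilinear form is controlled by a standard variance calculation using $\|B\|\leq a$ and $m_4$.

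The conditional Lindeberg condition is obtained by splitting $D_k^2 \leq 2 b_{kk}^2 (|y_{Nk}|^2-1)^2 + 8\,(\Re(\overline{y_{Nk}} S_k))^2$: the diagonal part is handled by hypothesis (i) together with $|b_{kk}|\leq a$, while the off-diagonal part is estimated by Cauchy--Schwarz using $\sup_{N,k}\E|y_{Nk}|^4 \leq m_4$ and the moment bound $\E|S_k|^4=O(\|B\|^4)$. The martingale CLT then delivers $N^{-1/2}\sum_k D_k \to \mathcal{N}(0, a_1 + a_2)$ in distribution. The main obstacle is the second-order concentration of $N^{-1}\sum_k |S_k|^2$ in the non-i.i.d.\ setting; because $\|B\|\leq a$ and $m_4$ are uniform in the distribution of the entries, the variance computation from \cite{BY}, \cite{CDF}, \cite{BGM} goes through with only cosmetic changes, and the Lindeberg-type hypothesis (\ref{Y-lf}) enters only through the truncation of the diagonal terms.
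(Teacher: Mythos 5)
Your proposal is correct and follows exactly the route the paper takes: the paper gives no self-contained proof of Theorem \ref{thm:qf} but delegates it to the martingale-CLT arguments of \cite{BY}, \cite{CDF} (appendix by Baik and Silverstein), and \cite{BGM}, which is precisely the decomposition $D_k = b_{kk}(|y_{Nk}|^2-1) + 2\Re(\overline{y_{Nk}}S_k)$ with the predictable-bracket and conditional-Lindeberg verification you carry out, the non-i.i.d.\ aspect entering only through the uniform bound $m_4$ and hypotheses (i) and (iv), as you note. The only point stated more loosely than it deserves is the $o_{\mathbb P}(1)$ control of the third-moment cross term $R_k$ and of $\sum_k|S_k|^2 - \E\sum_k|S_k|^2$ (the relevant quadratic/linear forms involve the triangular truncation of $B$, whose operator norm is controlled via \cite{mt}), but this is the same computation as in the cited references and does not constitute a gap.
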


\begin{theorem}[Central Limit Theorem for Real Quadractic Forms] \label{thm:real-qf}
Let $B=(b_{ij})_{1 \leq i,j \leq N}$ be a $N \times N$ random real symmetric matrix and $y_N=(y_{Nj})_{1 \leq j \leq N}$ be an independent vector of 
size $N$ which contains independent real standardized entries with $\sup_{N,j}\E|y_{Nj}|^4 =m_4 < \infty$.  Assume that conditions 
(i)-(iv) hold as in Theorem 
\ref{thm:qf} with
\begin{equation}
\label{kappaireal}
\kappa_4(y_{Ni}):=\E|y_{Ni}|^4 -3, \ 1\leq i \leq N.
\end{equation}

Then the random variable $\frac{1}{\sqrt{N}}(y_N^\ast B y_N - \Tr B)$ converges in distribution to a Gaussian random variable with mean 
zero and variance
\begin{equation*}
        v^2=a_1 + 2a_2.
\end{equation*}
\end{theorem}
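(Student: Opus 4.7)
The plan is to mirror the proof of Theorem~\ref{thm:qf}, adapted to the real symmetric setting; the substantive difference lies in the conditional second moment of the martingale differences, and it is exactly this that promotes $a_2$ to $2a_2$ in the limiting variance. The argument proceeds in three stages: truncation, a martingale decomposition, and verification of the hypotheses of a martingale central limit theorem.

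First, using hypothesis~(i) exactly as in Lemma~\ref{lemma:1}, I would truncate each $y_{Nj}$ at level $\epsilon_N\sqrt{N}$ for an appropriate sequence $\epsilon_N\to 0$ and recenter/rescale so that the resulting standardized variables $\tilde y_{Nj}$ are independent, bounded by $\epsilon_N\sqrt{N}$, and coincide with the originals with probability tending to one. Since $B$ is independent of $y_N$, I would then condition on $B$ and restrict to a high-probability event on which hypotheses (ii)--(iv) hold up to negligible errors, so that $B$ may be treated as a deterministic symmetric matrix with $\|B\|\leq a$, $\frac{1}{N}\Tr B^2\to a_2$, and $\frac{1}{N}\sum_i b_{ii}^2\kappa_4(y_{Ni})\to a_1$.

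The core step is the martingale difference decomposition. Setting $\mathcal{F}_k = \sigma(B, y_{N1},\dots,y_{Nk})$ and using $\E[y_N^{T} B y_N\mid B]=\Tr B$, a direct computation with $\E y_{Ni}y_{Nj}=\delta_{ij}$ yields $y_N^{T} B y_N - \Tr B = \sum_{k=1}^N d_k$, where $\{d_k\}$ is a martingale difference sequence for $\{\mathcal{F}_k\}$ given by
\begin{equation*}
d_k = b_{kk}(y_{Nk}^2 - 1) + 2 y_{Nk} S_k, \qquad S_k := \sum_{j<k} b_{kj} y_{Nj}.
\end{equation*}
The qualitative difference from Theorem~\ref{thm:qf} is the explicit factor $2$ in front of $y_{Nk} S_k$: in the Hermitian case with $\E y_{Nj}^2 = 0$ the analogous off-diagonal contribution $\bar y_{Nk}S_k + y_{Nk}\bar S_k$ has conditional second moment $2|S_k|^2$, while in the real case it is $4 S_k^2$, and this doubling is precisely what replaces $a_2$ by $2a_2$ in the final variance.

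To invoke a standard martingale CLT (for instance McLeish's), I would verify (a) that $\frac{1}{N}\sum_k \E[d_k^2\mid\mathcal{F}_{k-1}] \to a_1 + 2a_2$ in probability, and (b) a Lindeberg condition for $\{d_k/\sqrt N\}$. Expanding
\begin{equation*}
\E[d_k^2\mid\mathcal{F}_{k-1}] = b_{kk}^2\bigl(\kappa_4(y_{Nk})+2\bigr) + 4 S_k^2 + 4 b_{kk}(\E y_{Nk}^3)S_k,
\end{equation*}
the deterministic part of the average equals $\frac{1}{N}\sum_k b_{kk}^2\kappa_4(y_{Nk}) + \frac{2}{N}\Tr B^2 \to a_1 + 2a_2$ by (iv) and (iii), and the third term has mean zero. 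The Lindeberg bound for (b) is routine after truncation, since $|d_k|\leq C\,\epsilon_N\sqrt N$ on the truncated event, so $\E d_k^4 \lesssim \epsilon_N^2 N \cdot \E d_k^2$. The main obstacle I anticipate is in (a): showing that the random terms $\frac{4}{N}\sum_k S_k^2$ and $\frac{4}{N}\sum_k b_{kk}(\E y_{Nk}^3) S_k$ concentrate about their means without any i.i.d.\ assumption. This requires a fourth-moment variance estimate that carefully exploits $\|B\|\leq a$ and the Lindeberg-type hypothesis~(i), in place of the cumulant-based manipulations of \cite{BY, BGM}; once that concentration is in hand, the martingale CLT immediately delivers the claimed Gaussian limit with variance $v^2 = a_1 + 2a_2$.
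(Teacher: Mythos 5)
Your proposal is correct and follows essentially the same route as the paper, which does not write out a proof but defers to the martingale-CLT arguments of Bai--Yao and the Baik--Silverstein appendix in \cite{CDF}: truncation via the Lindeberg hypothesis (i), the martingale difference decomposition $d_k=b_{kk}(y_{Nk}^2-1)+2y_{Nk}S_k$, convergence of $\frac{1}{N}\sum_k\E[d_k^2\mid\mathcal{F}_{k-1}]$ to $a_1+2a_2$, and a conditional Lindeberg condition. The only small slip is the truncation level: hypothesis (i) controls $|y_{Nj}|^2-1$ at level $\epsilon\sqrt{N}$, so the natural truncation of $y_{Nj}$ is at order $N^{1/4}$ rather than $\sqrt{N}$, consistent with the bound $\E d_k^4\lesssim \epsilon_N^2 N\cdot\E d_k^2$ you actually use.
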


Finally, we formulate the multidimensional versions of Theorems \ref{thm:qf} and \ref{thm:real-qf}.  
We again consider the real and complex cases separately.  

\begin{theorem} \label{thm:mult-qf}
Let $\{B^{s,t}: 1 \leq s,t \leq r\}$ be a family of $N \times N$ random matrices with the property that $(B^{s,t})^\ast = B^{t,s}$.  
Let $\{y_N^{(s)} : 1 \leq s \leq r\}$ be a family of independent $N$-vectors with independent complex standardized entries where $y^{(s)}_N = 
(y_{Nj}^{(s)})_{1 \leq j \leq N}$, $\sup_{N,j}\*\E|y_{Nj}^{(s)}|^4 = m_4 < \infty$, and  $\E[(y_{Nj}^{(s)})^2] = 0$.  Further assume that
\begin{enumerate}[(i)]
\item for all $\epsilon>0$, 
\begin{equation*} 
        \frac{1}{N}\sum_{j=1}^N \E\left[ \left| |y_{Nj}^{(s)}|^2-1 \right|^2 \indicator{||y_{Nj}^{(s)}|^2-1| > \epsilon \sqrt{N}}\right] \longrightarrow 0
\end{equation*}
as $N \rightarrow \infty$ for each $1 \leq s \leq r$,
\item there exists a constant $a>0$ (not depending on $N$) such that $\max_{1 \leq s,t \leq r}\|B^{s,t}\| \leq a$,
\item $\frac{1}{N} \Tr ((B^{s,t})^\ast B^{s,t})$ converges in probability to a number $a_2(s,t)$,
\item $\frac{1}{N} \sum_{i=1}^N (B^{s,s})_{ii}^2 \* \kappa_4(y_{Ni}^{(s)})$ converges in probability to a number $a_1(s),$  
\end{enumerate}
where 
\begin{equation*}
\kappa_4(y_{Ni}^{(s)})= \E |y_{Ni}^{(s)}|^4-2, \ 1\leq i\leq N.
\end{equation*}
Then the $r \times r$ matrix
\begin{equation*}
        G_N = \frac{1}{\sqrt{N}} \left( (y_N^{(s)})^\ast B^{s,t} y_N^{(t)} - \delta_{s,t} \Tr B^{s,t} \right)_{1 \leq s,t \leq r}
\end{equation*}
converges in distribution to an $r \times r$ Hermitian matrix $G$ such that the linearly independent entries are statistically independent and 
$\Re(G_{st}), \Im(G_{st}) \sim \mathcal{N}\left(0, \frac{1}{2}a_2(s,t) \right)$ for $s \neq t$ and $G_{ss} \sim 
\mathcal{N}(0, a_1(s) + a_2(s,s))$.
\end{theorem}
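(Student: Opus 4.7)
The plan is to deduce Theorem \ref{thm:mult-qf} from the scalar Theorem \ref{thm:qf} via the Cram\'er--Wold device. By the standard criterion for joint convergence of Hermitian matrix-valued random variables, it suffices to prove that for every family of scalars $\{\gamma_{s,t}\}_{1\le s,t\le r}\subset\C$ satisfying the Hermiticity constraint $\overline{\gamma_{s,t}}=\gamma_{t,s}$ (which forces each $\gamma_{s,s}$ to be real), the real-valued linear combination
\[
L_N := \sum_{s,t=1}^r \gamma_{s,t}(G_N)_{s,t} = \frac{1}{\sqrt N}\Bigl(\sum_{s,t=1}^r \gamma_{s,t}(y_N^{(s)})^\ast B^{s,t}y_N^{(t)} - \sum_{s=1}^r \gamma_{s,s}\Tr B^{s,s}\Bigr)
\]
converges to a centered Gaussian with the variance predicted by the law of $G$ described in the statement.

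The key step is to rewrite $L_N$ as a single scalar quadratic form and apply Theorem \ref{thm:qf}. First I would stack the vectors into $\tilde Y_N := \bigl((y_N^{(1)})^{\top},\ldots,(y_N^{(r)})^{\top}\bigr)^{\top}\in\C^{rN}$, and assemble the $rN\times rN$ block matrix $\tilde B_N$ whose $(s,t)$-block equals $\gamma_{s,t}B^{s,t}$. The conditions $\overline{\gamma_{s,t}}=\gamma_{t,s}$ and $(B^{s,t})^\ast = B^{t,s}$ make $\tilde B_N$ Hermitian, and $\Tr\tilde B_N = \sum_s \gamma_{s,s}\Tr B^{s,s}$, so
\[
L_N = \sqrt{r}\cdot\frac{1}{\sqrt{rN}}\bigl(\tilde Y_N^\ast\tilde B_N\tilde Y_N - \Tr\tilde B_N\bigr).
\]
The entries of $\tilde Y_N$ are independent, standardized, and inherit $\E[(\tilde Y_{N,j})^2]=0$ from each $y_N^{(s)}$; hypothesis (ii) together with a trivial block bound gives $\|\tilde B_N\|\le r\cdot \max_{s,t}|\gamma_{s,t}|\cdot a$; and the scalar Lindeberg condition for $\tilde Y_N$ (over dimension $rN$) is equivalent to applying (i) separately to each $y_N^{(s)}$. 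Verifying hypotheses (iii) and (iv) block by block, using $\gamma_{s,t}\gamma_{t,s}=|\gamma_{s,t}|^2$ together with $\Tr(B^{s,t}B^{t,s})=\Tr((B^{s,t})^\ast B^{s,t})$, yields the limits $\tfrac{1}{r}\sum_{s,t}|\gamma_{s,t}|^2 a_2(s,t)$ and $\tfrac{1}{r}\sum_s \gamma_{s,s}^2 a_1(s)$ respectively. After the $\sqrt{r}$ correction, Theorem \ref{thm:qf} gives $L_N \to \mathcal N(0,\sigma_L^2)$ with
\[
\sigma_L^2 = \sum_{s=1}^r \gamma_{s,s}^2\, a_1(s) + \sum_{s,t=1}^r|\gamma_{s,t}|^2\, a_2(s,t).
\]
A direct computation of the variance of $\sum_{s,t}\gamma_{s,t}G_{s,t}$ under the candidate limit law---using the asserted independence of distinct entries, the realness of the diagonal, and the circularity of the off-diagonal complex Gaussians (which is precisely what $\E[(y_{N,j}^{(s)})^2]=0$ forces, splitting $a_2(s,t)$ equally between real and imaginary parts)---yields the same expression, so matching $\sigma_L^2$ for all admissible $\gamma$ pins down the joint limit law.

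The hard part is not the Cram\'er--Wold reduction above, which is essentially formal, but invoking Theorem \ref{thm:qf} at the required level of generality: $B$ is random, merely assumed independent of $y_N$ with bounded operator norm, and the entries satisfy only a fourth-moment Lindeberg condition rather than uniform moment bounds. The standard route is to condition on $B$ and apply a martingale CLT to the difference sequence $M_k := (\E_k - \E_{k-1})(y_N^\ast B y_N)$ adapted to $\mathcal F_k := \sigma(y_{N,1},\ldots,y_{N,k})$; hypotheses (i), (iii), (iv) are consumed when verifying the conditional Lindeberg negligibility and computing the limit of $\sum_k \E_{k-1}|M_k|^2$ to identify $v^2 = a_1 + a_2$. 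Once the scalar statement is secured, the multidimensional Theorem \ref{thm:mult-qf} follows immediately from the stacking argument above, and the same scheme (with the real variant of Theorem \ref{thm:qf}) handles the analogous real multidimensional statement used in the proof of Theorem \ref{thm:resreal}.
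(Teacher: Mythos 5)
Your reduction is correct. Note first that the paper does not actually prove Theorem \ref{thm:mult-qf}: the appendix states it and defers to \cite{BY}, \cite{CDF} (the Baik--Silverstein appendix), and \cite{BGM}, remarking only that those arguments go through with small changes in the non-i.i.d.\ setting. In those references the multidimensional statement is obtained by running the martingale-difference CLT directly on an arbitrary linear combination $\sum_{s,t}\gamma_{s,t}(y^{(s)})^\ast B^{s,t}y^{(t)}$, which is Cram\'er--Wold in substance; your block-stacking device packages the same reduction more cleanly, since it lets you quote the scalar Theorem~\ref{thm:qf} verbatim in dimension $rN$ instead of redoing the conditional variance computation. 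The verification of the hypotheses for $\tilde B_N$ and $\tilde Y_N$ is right: the Lindeberg condition at threshold $\epsilon\sqrt{rN}$ is implied by the one at threshold $\epsilon\sqrt N$ for each block; $\Tr\tilde B_N^2=\sum_{s,t}|\gamma_{s,t}|^2\Tr((B^{s,t})^\ast B^{s,t})$; and the diagonal of the $(s,s)$ block is $\gamma_{s,s}(B^{s,s})_{ii}$ with $\gamma_{s,s}$ real. The variance match with the candidate law works because $a_2(s,t)=a_2(t,s)$ (as $\Tr((B^{s,t})^\ast B^{s,t})=\Tr((B^{t,s})^\ast B^{t,s})$), so $\sum_{s,t}|\gamma_{s,t}|^2a_2(s,t)=\sum_s\gamma_{s,s}^2a_2(s,s)+2\sum_{s<t}|\gamma_{s,t}|^2a_2(s,t)$, and since every real linear combination of $\{G_{ss},\Re G_{st},\Im G_{st}\}$ arises from a Hermitian $\gamma$, the resulting Gaussian limits with matched variances determine the joint law by polarization. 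The only point you leave unproved is the scalar Theorem~\ref{thm:qf} itself, but that is a separately stated result in the paper (itself unproved there, for the same reason), and your indication that it rests on a martingale CLT conditioned on $B$ is exactly the route taken in the cited sources; so nothing essential is missing.
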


\begin{theorem} \label{thm:mult-real-qf}
Let $\{B^{s,t}: 1 \leq s,t \leq r\}$ be a family of $N \times N$ real random matrices with the property that $(B^{s,t})\T = B^{t,s}$.  
Let $\{y_N^{(s)} : 1 \leq s \leq r\}$ be a family of independent $N$-vectors with independent real standardized entries where 
$y^{(s)}_N = (y_{Nj}^{(s)})_{1 \leq j \leq N}$ and $\sup_{N,j} \* \E|y_{Nj}^{(s)}|^4 = m_4 < \infty.$  Further assume that the conditions (i)-(iv) from 
Theorem 
\ref{thm:mult-qf} hold with
\begin{equation*}
\kappa_4(y_{Ni}^{(s)})= \E |y_{Ni}^{(s)}|^4-3, \ 1\leq i\leq N.
\end{equation*}

Then the $r \times r$ matrix
\begin{equation*}
        G_N = \frac{1}{\sqrt{N}} \left( (y_N^{(s)})^\ast B^{s,t} y_N^{(t)} - \delta_{s,t} \Tr B^{s,t} \right)_{1 \leq s,t \leq r}
\end{equation*}
converges in distribution to an $r \times r$ symmetric matrix $G$ such that the linearly independent entries are statistically independent and 
$G_{s,t} \sim \mathcal{N}(0, a_2(s,t))$ for $s \neq t$ and $G_{s,s} \sim \mathcal{N}\left(0, a_1(s)+2a_2(s,s) \right)$.  
\end{theorem}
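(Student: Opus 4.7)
The plan is to derive Theorem \ref{thm:mult-real-qf} from the scalar Theorem \ref{thm:real-qf} via the Cram\'er--Wold device. Since $G_N$ is a real symmetric $r \times r$ matrix, its joint distribution is determined by the linear combinations
$$T_N = \sum_{1 \leq s \leq t \leq r} \beta_{s,t}\, (G_N)_{s,t}, \qquad \beta_{s,t} \in \R,$$
so it suffices to prove that each such $T_N$ converges in distribution to a centered Gaussian with variance
$$v^2 := \sum_s \beta_{s,s}^2 \bigl(a_1(s) + 2 a_2(s,s)\bigr) + \sum_{s<t} \beta_{s,t}^2\, a_2(s,t),$$
which matches the variance of $\sum_{s \leq t}\beta_{s,t} G_{s,t}$ under the claimed independent Gaussian structure of $G$.

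First I would stack the independent vectors into $\tilde y_N := ((y_N^{(1)})^T, \ldots, (y_N^{(r)})^T)^T \in \R^{rN}$; its $rN$ entries are independent, real, and standardized. Then $T_N = \tfrac{1}{\sqrt{N}}(\tilde y_N^T \tilde B \tilde y_N - \Tr \tilde B)$, where $\tilde B$ is the symmetric $rN \times rN$ block matrix whose $(s,s)$ diagonal block equals $\beta_{s,s} B^{s,s}$ and whose $(s,t)$ off-diagonal block for $s<t$ equals $\tfrac{1}{2}\beta_{s,t} B^{s,t}$ (with the $(s,t)$ block for $s>t$ then determined by $(B^{s,t})^T = B^{t,s}$). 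I would then verify the hypotheses of Theorem \ref{thm:real-qf} at dimension $M=rN$: the Lindeberg condition for $\tilde y_N$ is immediate from the individual Lindeberg conditions on each $y_N^{(s)}$; the operator-norm bound $\|\tilde B\| \leq r^2 a\, \max_{s,t}|\beta_{s,t}|$ follows from the blockwise estimate $\max_{s,t}\|B^{s,t}\|\leq a$; for the trace condition, a block expansion gives
$$\Tr \tilde B^2 = \sum_{s,u} \Tr(\tilde B_{s,u}\tilde B_{u,s}),$$
and the identity $(B^{s,t})^T = B^{t,s}$ together with cyclicity of trace reduces the off-diagonal cross terms to $\tfrac{1}{4}\beta_{s,t}^2\,\Tr((B^{s,t})^* B^{s,t})$ and the diagonal terms to $\beta_{s,s}^2\,\Tr((B^{s,s})^2)$, so that $\frac{1}{rN}\Tr \tilde B^2$ converges in probability by hypothesis (iii) of Theorem \ref{thm:mult-real-qf}; finally, since $\tilde B$ has a nonzero diagonal only inside the $(s,s)$ blocks, $\tfrac{1}{rN}\sum_{i=1}^{rN} \tilde B_{ii}^2 \kappa_4(\tilde y_{Ni})$ splits blockwise into a sum of $\beta_{s,s}^2 \cdot \tfrac{1}{rN}\sum_i (B^{s,s})_{ii}^2\kappa_4(y_{Ni}^{(s)})$, converging to $\tfrac{1}{r}\sum_s \beta_{s,s}^2 a_1(s)$ in probability by hypothesis (iv).

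Applying Theorem \ref{thm:real-qf}, $\tfrac{1}{\sqrt{rN}}(\tilde y_N^T \tilde B \tilde y_N - \Tr \tilde B) \to \mathcal{N}(0, \tilde a_1 + 2\tilde a_2)$ in distribution, and multiplying by $\sqrt{r}$ gives $T_N \to \mathcal{N}(0, r(\tilde a_1 + 2\tilde a_2))$; a short check of the bookkeeping yields $r(\tilde a_1 + 2\tilde a_2)=v^2$. Cram\'er--Wold then delivers the joint convergence $G_N \to G$, and the fact that $v^2$ is a positive-definite quadratic form in the $\beta_{s,t}$ with no cross products between different index pairs $(s,t)$ encodes the claimed independence of the entries of $G$ above and on the diagonal. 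The main obstacle is the block bookkeeping in step (iii) and handling the randomness of $\tilde B$; the latter is resolved by conditioning on $\tilde B$ (since $\tilde y_N$ is independent of $\tilde B$) and invoking Theorem \ref{thm:real-qf} on an event of probability tending to one on which the trace quantities are close to their limits, exactly as done in \cite{BY}, \cite{CDF}, and \cite{BGM}.
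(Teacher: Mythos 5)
Your reduction is correct. The paper itself gives no proof of Theorem \ref{thm:mult-real-qf}: the appendix only states the result and remarks that the arguments of \cite{BY}, \cite{CDF} (appendix by Baik and Silverstein) and \cite{BGM} go through with small changes in the non-i.i.d.\ setting; those references establish such CLTs directly, essentially by a martingale-difference CLT applied to the quadratic form. Your route is different and arguably cleaner: you stack the $r$ independent vectors into a single $rN$-vector, encode the chosen linear combination $\sum_{s\le t}\beta_{s,t}(G_N)_{s,t}$ as a single symmetric quadratic form via the block matrix $\tilde B$ (with the $\tfrac12$ symmetrization on off-diagonal blocks), verify hypotheses (i)--(iv) of the scalar Theorem \ref{thm:real-qf} at dimension $rN$, and conclude by Cram\'er--Wold; the bookkeeping $r(\tilde a_1+2\tilde a_2)=\sum_s\beta_{s,s}^2(a_1(s)+2a_2(s,s))+\sum_{s<t}\beta_{s,t}^2 a_2(s,t)$ checks out (each unordered off-diagonal pair contributes twice $\tfrac14\beta_{s,t}^2\Tr((B^{s,t})^*B^{s,t})$, and the factor $2$ in front of $\tilde a_2$ restores $a_2(s,t)$), and the absence of cross terms in this quadratic form in the $\beta$'s is exactly the claimed independence of the entries of $G$. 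What this buys is a short, self-contained derivation of the multidimensional statement from the scalar one; what it does not do --- and you should say so explicitly --- is prove the scalar Theorem \ref{thm:real-qf} itself, which remains an input cited from the literature, so your argument is a reduction rather than a proof from scratch. Two further small points worth making explicit: the Lindeberg condition for the stacked vector holds because the truncation level $\epsilon\sqrt{rN}$ only increases relative to $\epsilon\sqrt N$, and no separate conditioning on $\tilde B$ is needed since Theorem \ref{thm:real-qf} already permits random $B$ with the trace functionals converging in probability.
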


\end{appendix}

\end{document}